\newtheorem{thm}{Theorem}[section]
\newtheorem{rem}[thm]{Remark}
\newtheorem{lem}[thm]{Lemma}
\newtheorem{prop}[thm]{Proposition}
\numberwithin{equation}{section}
\newcommand{\al}{\alpha}
\newcommand{\ld}{\lambda}
\newcommand{\de}{\delta}
\newcommand{\De}{\Delta}
\newcommand{\ep}{\varepsilon}
\newcommand{\om}{\omega}
\newcommand{\Om}{\Omega}
\newcommand{\ga}{\gamma}
\newcommand{\Ga}{\Gamma}
\renewcommand{\P}{\mathcal{P}}
\renewcommand{\S}{\mathscr{S}}
\newcommand{\g}{\mathfrak{g}}
\newcommand{\U}{\mathbb{S}}
\DeclareMathOperator{\tr}{tr}
\newcommand{\Real}{\mathbb{R}}
\newcommand{\norm}[1]{\Vert#1\Vert}
\def\<{\left\langle} \def\>{\right\rangle}
\def\({\left(} \def\){\right)}
\newcommand{\n}{\nabla}
\newcommand{\p}{\partial}
\renewcommand{\S}{\mathscr{S}}
\begin{document}
\title{Existence and uniqueness of local regular solution to the Schr\"{o}dinger flow from a bounded domain in $\mathbb{R}^3$ into $\U^2$}
\author{Bo Chen}
\address{Department of Mathematical Sciences, Tsinghua University, Beijing, 100084, China}
\email{chenbo@mail.tsinghua.edu.cn}
	
\author{Youde Wang}
\address{1. School of Mathematics and Information Sciences, Guangzhou University;
		2. Hua Loo-Keng Key Laboratory of Mathematics, Institute of Mathematics, AMSS, and School of
		Mathematical Sciences, UCAS, Beijing 100190, China.}
\email{wyd@math.ac.cn}
\begin{abstract}
In this paper, we show the existence and uniqueness of local regular solutions to the initial-Neumann boundary value problem of Schr\"{o}dinger flow from a smooth bounded domain $\Om\subset \Real^3$ into $\U^2$(namely Landau-Lifshitz equation without dissipation). The proof is built on a parabolic perturbation method, an intrinsic geometric energy argument and some observations on the behaviors of some geometric quantities on the boundary of the domain manifold. It is based on methods from Ding and Wang (one of the authors of this paper) for the Schr\"odinger flows of maps from a closed Riemannian manifold into a K\"ahler manifold as well as on methods by Carbou and Jizzini for solutions of the Landau-Lifshitz equation.
\end{abstract}
\maketitle
\section{Introduction}\label{intr}
In this paper, we are concerned with the existence and uniqueness of strong solutions to the initial-Neumann boundary value problem of the following Schr\"odinger flow from a smooth bounded domain $\Om\subset\mathbb{R}^3$ into $\U^2$
$$\p_tu=u\times\Delta u,$$
where ``$\times$" denotes the cross product in $\Real^{3}$. It is well-known that this equation is also called the Landau-Lifshitz equation with a long history.

\subsection{Definitions and Background}
In physics, the Landau-Lifshitz (LL) equation is a fundamental evolution equation for the ferromagnetic spin chain and was proposed on the phenomenological ground in studying the dispersive theory of magnetization of ferromagnets. It was first deduced by Landau and Lifshitz in \cite{LL}, and then proposed by Gilbert in \cite{G} with dissipation. It is well-known that the Landau and Lifshitz system is of fundamental importance in theory of magnetization and ferromagnets, and has extensive applications in physics.

In fact, this equation describes the Hamiltonian dynamics corresponding to the Landau-Lifshitz energy, which is defined as follows. Consider a ferromagnetic body occupying a bounded, possibly multi-connected domain $\Omega$ of the Euclidean space $\mathbb{R}^3$. We neglect mechanical effects due to magnetization (magnetostriction) and assume the temperature to be constant and lower than Curie's temperatures. Suppose that $\Om\subset\mathbb{R}^3$ is a bounded smooth domain. Let $u$, denoting magnetization vector, be a mapping from $\Omega$ into a unit sphere $\U^2\subset\mathbb{R}^3$. The Landau-Lifshitz energy functional of a map $u:\Om\to\U^2$ is defined by
$$\mathcal{E}(u):=\int_{\Om}\Phi(u)\,dx+\frac{1}{2}\int_{\Om}|\n u|^2\,dx-\frac{1}{2}\int_{\Omega}h_d\cdot u\,dx.$$
Here $\n$ denote the gradient operator and $dx$ is the volume element of $\mathbb{R}^3$.
	
For the above Landau-Lifshitz functional, the first and second terms are the anisotropy and exchange energies, respectively, and $\Phi(u)$ is a smooth function on $\U^2$. The last term is the self-induced energy, and $h_d(u)$ is the demagnetizing field, which has the following form
$$h_d(u)(x) =-\n \int_{\Omega} \nabla N(x-y)u(y)dy,$$
where $N(x) = -\frac{1}{4\pi|x|}$ is the Newtonian potential in $\mathbb{R}^3$.
	
The Landau-Lifshitz (for short LL) equation without dissipation can be written as
\begin{equation}\label{LL-eq}
\p_t u=-u\times h
\end{equation}
where the local field $h$ of $\mathcal{E}(u)$ can be derived as
\[
h:=-\frac{\delta\mathcal{E}(u)}{\delta u}= \Delta u + h_d -\nabla_u\Phi.
\]
	
In this paper we want to consider the existence of regular solution to equation \eqref{LL-eq}. Since the anisotropy term $\Phi$, non-local field $h_d(u)$  and the negative sign``$-$'' in equation \eqref{LL-eq} do not affect on our analysis and main conclusions, for the sake of convenience and simplicity, we only consider the classical Schr\"{o}dinger flow into $\U^2$ (Landau-Lifshitz)
\[\p_tu =u\times\De u.\]

Intrinsicly, ``$u\times$" can be considered as a complex structure $J$ on $\U^2$, who rotates anticlockwise the tangent space at $u$ by an angle of $\frac{\pi}{2}$ degrees. Therefore, we can write the above equation as
\[\p_tu = J(u)P(u)(\De u),\]
where $P(u): \mathbb{R}^3\rightarrow T_u\U^2$ is a standard projection operator.

From the viewpoint of infinite dimensional symplectic geometry, Ding-Wang \cite{DW} proposed to consider the so-called Schr\"odinger flows for maps from a Riemannian manifold into a symplectic manifold, which can be regarded as an extension of LL equation \eqref{LL-eq} and was also independently introduced from the viewpoint of integrable systems by Terng and Uhlenbeck in \cite{Uh1}. Namely, suppose $(M,g)$ is a Riemannian manifold, $(N,J,\om)$ is a symplectic manifold, the Schr\"odinger flow is a time-dependent map $u:M\times \Real^+\to N\hookrightarrow \Real^{n+k}$ satisfying
\[\p_t u=J(u)\tau(u)\]
where $$\tau(u)=\De_g u+A(u)(\n u,\n u)$$ is the tension field of $u$, where $A(u)(\cdot, \cdot)$ is the second fundamental form of $N$ in $\Real^{n+k}$. Here we always embed isometrically $(N,J,\om)$ in an Euclidean space $\Real^{n+k}$ where $n=\dim(N)$.

\medskip
A great deal of effort has been devoted to the study of Landau-Lifshitz equation defined on an Euclidean spaces or a flat torus (closed manifold) in the last five decades. One has made great progress in the PDE aspects of the Schr\"odinger flows containing the existence, uniqueness and regularities of various kinds of solutions. Now, we recall some known results which are closely related to our work in the present paper.

In 1986, P.L. Sulem, C. Sulem and C. Bardos in \cite{SSB} employed difference method to prove that the Schr\"{o}dinger flow for maps from $\Real^n$ into $\U^2$ admits a global weak solution or a smooth local solution under suitable initial value conditions. Moreover, they also addressed the existence of global smooth solution if the initial value map is small. In 1998, Y.D. Wang( the second named author) \cite{W} obtained the global existence of weak solution to the Schr\"odinger flow for maps from a closed Riemannian manifold or a bounded domain in $\Real^n$ into $\U^2$ by adopting a more geometric approximation equation than the Ginzburg-Landau penalized equation used for the LLG equation in \cite{AS,B,T}. Later, Z.L. Jia and Y.D. Wang \cite{JW1,JW2} employed a method originated from \cite{W}  to achieve the global weak solutions to a large class of generalized Schr\"odinger flows in more general setting, where the base manifold is a bounded domain $\Om\subset\Real^n$($n\geq 2$) or a compact Riemannian manifold $M^n$ and the target space is $\U^2$ or the unit sphere $\U^n_\g$ in a compact Lie algebra $\g$. However, the existence of global weak solution for the Schr\"odinger flows between manifolds are still open.

The local existence of the Schr\"odinger flow from a general closed Riemannian manifold into a K\"ahler manifold was first obtained by Ding and the second named author of this paper in \cite{DW}. By using a parabolic approximation and the intrinsic geometric energy method, they proved that, if M is an $m$ dimensional compact Riemannian manifold or the Euclidean space $\mathbb{R}^m$ and the initial map $u_0 \in W^{k,2}(M, N)$ with $k \geq[m/2] + 2$, then there exists a local solution $u \in L^\infty([0, T), W^{k,2}(M, N))$. The local regular(smooth) solution to the Schr\"odinger flow from $\Real^n$ into a K\"ahler manifold was also addressed by Ding and Wang in \cite{DW1} (Later, Kenig, Lamm, Pollack, Staffilani and Toro in \cite{KLPST} also provided another different approach). Furthermore, they also obtained the persistence of regularity results, in that the solution always stays as regular as the initial data (as measured in Sobolev norms), provided that one is within the time of existence guaranteed by the local existence theorem. In proving its well-posedness, the heart of the matter is resolved by estimating multi-linear forms of some intrinsic geometric quantities.

For low-regularity initial data, the initial value problem for Schr\"odinger flow from an Euclidean space into $\mathbb{S}^2$ has been studied indirectly using the ``modified Schr\"odinger map equations" and certain enhanced energy methods, for instance, A.R. Nahmod, A. Stefanov and K. K. Uhlenbeck \cite{NSU} have ever used the standard technique of Picard iteration in some suitable function spaces of the Schr\"odinger equation to obtain a near-optimal (but conditional) local well-posedness result for the Schr\"odinger map flow equation from two dimensions into a Riemann surface $X$, in the model cases of the standard sphere $X = \mathbb{S}^2$ or hyperbolic space $X = \mathbb{H}^2$. In proving its well-posedness, the heart of the matter is resolved by considering truly quatrilinear forms of weighted $L^2$-functions.

For one dimensional global existence for Schr\"odinger flow from $\mathbb{S}^1$ or $\mathbb{R}^1$ into a K\"ahler manifold, we refer to \cite{PWW, RRS} and references therein. The global well-posedness result for the Schr\"odinger flow from $\mathbb{R}^n$ ($n\geq 3$) into $\U^2$ in critical Besov spaces was proved by Ionescu and Kenig in \cite{IK}, and independently by Bejanaru in \cite{B1}, and then was improved to global regularity for small data in the critical Sobolev spaces in dimensions $n\geq4$ in \cite{BIK}. Finally, in \cite{BIKT} the global well-posedness result for the Schr\"odinger flow for small data in the critical Sobolev spaces in dimensions $n\geq2$ was addressed. Recently, Z. Li in \cite{L1, L2} proved that the Schr\"odinger flow from $\mathbb{R}^n$ with $n\geq 2$ to compact K\"ahler manifold with small initial data in critical Sobolev spaces is global, which solves the open problem raised in \cite{BIKT}.

\medskip
On the contrary, F. Merle, P. Rapha\"el and I. Rodnianski \cite{MRR} also considered the energy critical Schr\"odinger flow problem with the 2-sphere target for equivariant initial data of homotopy index $k=1$. They showed the existence of a codimension one set of smooth well localized initial data arbitrarily close to the ground state harmonic map in the energy critical norm, which generates finite time blowup solutions, and gave a sharp description of the corresponding singularity formation which occurs by concentration of a universal bubble of energy. One also found some self-similar solutions to Schr\"odinger flow from $\mathbb{C}^n$ into $\mathbb{C}P^n$ with local bounded energy which blow up at finite time, for more details we refer to \cite{DTZ, GSZ, NSVZ}.

\medskip
As for some travelling wave solutions with vortex structures, F. Lin and J. Wei \cite{LW} employed perturbation method to consider such solutions for the Schr\"odinger map flow equation with easy-axis and proved the existence of smooth travelling waves with bounded energy if the velocity of travelling wave is small enough. Moreover, they showed the travelling wave solution has exactly two vortices. Later, J. Wei and J. Yang \cite{WY} considered the same Schr\"odinger map flow equation as in \cite{LW}, i.e. the Landau-Lifshitz equation describing the planar ferromagnets. They constructed a travelling wave solution possessing vortex helix structures for this equation. Using the perturbation approach, they give a complete characterization of the asymptotic behaviour of the solution.

\medskip
On the other hand, since the seventies of the 20th century magnetic domains have been the object of a considerable research from the applicative viewpoint (e.g., see \cite{S, V}), especially because of invention of ``magnetic bubbles" devices and their use in computer hardware. In the literature, physicists and mathematicians are always interested in the Landau-Lifshitz-Gilbert system with Neumann boundary conditions(see \cite{CF, S-R}), for instance, Carbou and Jizzini considered a model of ferromagnetic material subject to an electric current, and proved the local in time existence of very regular solutions for this model in
the scale of $H^k$ spaces. In particular, they described in detail the compatibility conditions at the boundary for the initial data, for details we refer to \cite{CJ}. Roughly speaking, Carbou and Jizzini showed that
\begin{equation*}
\begin{cases}
\p_tu =-u\times(u\times\De u) + \alpha u\times\De u,\quad\quad&\text{(x,t)}\in\Om\times \Real^+,\\[1ex]
\frac{\p u}{\p \nu}=0, &\text{(x,t)}\in\p\Om\times \Real^+,\\[1ex]
u(x,0)=u_0: \Om\to \U^2,
\end{cases}
\end{equation*}
admits a very regular solution if $u_0$ meets some compatibility conditions at the boundary, where $\alpha$ is a real number.

\medskip
A natural problem is if the following
\begin{equation*}
\begin{cases}
\p_tu =u\times\De u,\quad\quad&\text{(x,t)}\in\Om\times \Real^+,\\[1ex]
\frac{\p u}{\p \nu}=0, &\text{(x,t)}\in\p\Om\times \Real^+,\\[1ex]
u(x,0)=u_0: \Om\to \U^2,
\end{cases}
\end{equation*}
where $\nu$ is the outer normal vector on $\Om$ and $u_{0}$ is the initial value map, admits a strong or regular solution? The results on the existence of global weak solutions proved by Wang in \cite{W} hints us that the initial-Neumann boundary value problem of the Schr\"odinger flow should be posed as follows
\begin{equation}\label{S-eq}
\begin{cases}
\p_tu =u\times\De u,\quad\quad&\text{(x,t)}\in\Om\times \Real^+,\\[1ex]
\frac{\p u}{\p \nu}=0, &\text{(x,t)}\in\p\Om\times \Real^+,\\[1ex]
u(x,0)=u_0: \Om\to \U^2,\quad \frac{\p u_0}{\p \nu}|_{\p \Om}=0.
\end{cases}
\end{equation}

Our goal of this paper is to prove the above problem (\ref{S-eq}) admits a local in time strong (regular) solution. The above Schr\"odinger flow with starting manifold is a bounded domain $\Om\subset \Real^{n}$ with $n\geq 2$ is a challenging problem, there is few results on the well-posedness of initial-Neumann boundary value problem \eqref{S-eq} in the literature.

\subsection{Strategy and Main Results}
In the present paper, we are intend to studying the local well-posedness of the above equation \eqref{S-eq}. However, the method involving harmonic analysis in $\Real^n$ used in \cite{B,B1,BIK,IK} may not be effective for the equation in a bounded domain or a complete compact manifold. And hence, we still apply a similar parabolic perturbation approximation and intrinsic geometric energy method with that in \cite{DW} since Carbou and Jizzini \cite{CJ} have shown that the corresponding problem of the approximating equation is well-posed. Because of the space of test functions associated to equation \eqref{S-eq} is much smaller than that for the setting in \cite{DW}, it is more difficult for us to get the desired geometric energy estimates on the solutions of \eqref{S-eq}. We need to overcome some new essential difficulties caused by the boundary of domain manifold.

Our main conclusions can be presented as follows
	
\begin{thm}\label{thm1}
Let $\Omega$ be a smooth bounded domain in $\mathbb{R}^3$. Assume that the initial value maps $u_0\in H^{3}(\Omega, \U^2)$ with $\frac{\p u_0}{\p \nu}|_{\p \Om}=0$. Then, there exists a positive number $T_0>0$ depending only on $\norm{u_0}_{H^3}$ and the geometry of $\Omega$ such that the equation \eqref{S-eq} admits a unique regular solution $u\in L^\infty([0,T_0], H^3(\Om,\U^2))$ with initial value $u_0$.
\end{thm}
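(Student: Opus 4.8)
The plan is to construct the solution by a parabolic perturbation (Landau--Lifshitz--Gilbert) approximation and then pass to the limit using intrinsic geometric energy estimates that are uniform in the perturbation parameter $\varepsilon$. Concretely, for $\varepsilon>0$ I would consider the Neumann initial-boundary value problem
\begin{equation*}
\begin{cases}
\p_t u_\varepsilon = \varepsilon\,\tau(u_\varepsilon) + u_\varepsilon\times\De u_\varepsilon, &(x,t)\in\Om\times\Real^+,\\[1ex]
\frac{\p u_\varepsilon}{\p\nu}=0, &(x,t)\in\p\Om\times\Real^+,\\[1ex]
u_\varepsilon(x,0)=u_0,
\end{cases}
\end{equation*}
where $\tau(u_\varepsilon)=\De u_\varepsilon+|\n u_\varepsilon|^2u_\varepsilon$ is the tension field of $u_\varepsilon$ into $\U^2$. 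This is precisely (a model form of) the Carbou--Jizzini system with $\alpha$ replaced by a small parameter and with $u_0$ satisfying only the first-order compatibility condition $\frac{\p u_0}{\p\nu}|_{\p\Om}=0$; I would invoke their local well-posedness to get, for each fixed $\varepsilon$, a regular solution $u_\varepsilon$ on a (possibly $\varepsilon$-dependent) time interval, together with enough regularity to justify the computations below.

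The core of the argument is a family of a priori estimates, uniform in $\varepsilon$, for a geometric energy of the form
\[
\mathcal{E}_k(t) \;=\; \sum_{j\le k}\int_\Om \big|\n_t^{?}\cdots\big|^2 \;=\; \int_\Om |\n u_\varepsilon|^2 + \int_\Om |\tau(u_\varepsilon)|^2 + \int_\Om |\n\tau(u_\varepsilon)|^2\,dx,
\]
the natural intrinsic analogue of the $H^3$ norm (one derivative costs a factor of the tension field, so $H^3$ control corresponds to $L^2$ control of $\n\tau(u)$; note $\frac{\p u_0}{\p\nu}=0$ makes $\tau(u_0)$ and its first covariant derivatives well-defined traces). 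Differentiating $\mathcal{E}_k$ in time and using the equation, the leading terms from the $u_\varepsilon\times\De u_\varepsilon$ part are antisymmetric and drop out under the $L^2$ pairing (this is the Schr\"odinger/Hamiltonian structure), the $\varepsilon\tau$ part contributes a good negative definite term $-\varepsilon\int|\n\tau|^2$-type quantity that we simply discard, and what remains is a sum of multilinear expressions in $u_\varepsilon,\n u_\varepsilon,\tau(u_\varepsilon),\n\tau(u_\varepsilon)$ plus curvature terms of $\U^2$, which by Sobolev embedding in dimension $3$ ($H^2\hookrightarrow L^\infty$, $H^1\hookrightarrow L^6$) are bounded by $C(\mathcal{E}_k)$ with $C$ a fixed polynomial. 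This yields a differential inequality $\frac{d}{dt}\mathcal{E}_k \le C(\mathcal{E}_k)$, hence a uniform lower bound $T_0$ on the existence time and a uniform bound $\sup_{[0,T_0]}\mathcal{E}_k(t)\le C(\|u_0\|_{H^3},\Om)$, independent of $\varepsilon$.

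The step I expect to be the genuine obstacle is the handling of the boundary terms generated by the integrations by parts in the energy identity. On a closed manifold these terms vanish, but here each integration by parts produces a surface integral over $\p\Om$; to close the estimate one must show these boundary integrals either vanish or are controlled by the interior energy. This is where ``some observations on the behaviors of some geometric quantities on the boundary'' enter: from $\frac{\p u}{\p\nu}=0$ one differentiates along the boundary and uses the equation to deduce further relations (e.g.\ identities relating $\frac{\p}{\p\nu}\tau(u)$, $\frac{\p}{\p\nu}\De u$ and tangential derivatives, involving the second fundamental form of $\p\Om\subset\Real^3$), which force the dangerous boundary terms to cancel or to be absorbed. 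Once the uniform estimates are in place, the passage to the limit is routine: $\{u_\varepsilon\}$ is bounded in $L^\infty([0,T_0],H^3)$ with $\{\p_tu_\varepsilon\}$ bounded in $L^\infty([0,T_0],H^1)$, so by Aubin--Lions a subsequence converges strongly in $C([0,T_0],H^2_{loc})$ (enough to pass to the limit in the nonlinear term $u_\varepsilon\times\De u_\varepsilon$ and in the Neumann condition), yielding a solution $u\in L^\infty([0,T_0],H^3(\Om,\U^2))$; the constraint $|u|=1$ is preserved since $u_\varepsilon\cdot\p_tu_\varepsilon=\varepsilon\,u_\varepsilon\cdot\tau(u_\varepsilon)=0$ pointwise. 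Uniqueness follows by a separate energy estimate on the difference $w=u_1-u_2$ of two solutions in a lower norm (say $L^2$ or $H^1$), where again the Hamiltonian term is antisymmetric, the nonlinear differences are controlled by the $H^3$ bounds on $u_1,u_2$, the boundary terms are killed by the Neumann condition, and Gronwall gives $w\equiv0$.
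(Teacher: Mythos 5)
Your existence argument follows the same overall route as the paper: the parabolic perturbation $\p_tu_\ep=\ep\tau(u_\ep)+u_\ep\times\De u_\ep$ with Carbou--Jizzini providing the approximate solutions, uniform-in-$\ep$ geometric energy estimates at the level of $\int|\tau|^2+|\n\tau|^2$ (equivalently $\|\p_tu_\ep\|_{H^1}^2$, since $\p_tu_\ep=\ep\tau+J\tau$ gives $|\p_tu_\ep|^2=(1+\ep^2)|\tau|^2$), and Aubin--Lions to pass to the limit. The paper implements the energy step by deriving a second-order-in-time wave-type equation for $u_\ep$ and testing it with $\p_tu_\ep$ and $-\De\p_tu_\ep$, then recovering $\|u_\ep\|_{H^3}$ from $\|\p_tu_\ep\|_{H^1}$ through the equation itself; this is bookkeeping, not a different idea. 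However, you correctly identify the boundary terms as ``the genuine obstacle'' and then only assert that differentiating the Neumann condition along $\p\Om$ ``forces the dangerous terms to cancel.'' That assertion is the actual mathematical content here: the paper must \emph{prove} the compatibility conditions $\frac{\p}{\p\nu}\frac{\p u_\ep}{\p t}\big|_{\p\Om}=0$ and $\frac{\p\tau(u_\ep)}{\p\nu}\big|_{\p\Om}=0$ (Lemma \ref{comp-bdy}), and the proof is not a pointwise differentiation along the boundary --- it exploits the Galerkin structure (each $\p_tu_\ep^n$ is a combination of Neumann eigenfunctions, so it inherits the Neumann condition, which survives the limit), and then the algebraic fact that $\ep\frac{\p\tau}{\p\nu}+u_\ep\times\frac{\p\tau}{\p\nu}=0$ forces $\frac{\p\tau}{\p\nu}=0$ since $\n\tau\perp u_\ep\times\n\tau$. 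A related technical point you skip is that $\frac{\p^2u_\ep}{\p t^2}$ must first be shown to lie in $L^2_{loc}((0,T_\ep),H^1)$ (the paper's Appendix A) before testing with $-\De\p_tu_\ep$ is legitimate.

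The genuine gap is in your uniqueness argument. A direct energy estimate on $w=u_1-u_2$ in $L^2$ or $H^1$ does not close, because the dispersive term loses a derivative. Writing $\p_tw=w\times\De u_1+u_2\times\De w$ and pairing with $-\De w$, the term $\int\<u_2\times\De w,\De w\>$ vanishes, but $-\int\<w\times\De u_1,\De w\>$ requires $\|\De w\|_{L^2}$, which is not controlled by $\|w\|_{H^1}$; bounding it crudely by the a priori $H^2$ bounds on $u_1,u_2$ only yields a differential inequality of the form $y'\le Cy^{1/2}$ (or $y^{3/4}$ after interpolation), which fails the Osgood condition at $y=0$ and therefore does not imply $y\equiv0$. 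This derivative loss is exactly why the paper abandons the extrinsic difference and instead uses the McGahagan--Song--Wang intrinsic energy $Q_1=\int_\Om d^2(u_1,u_2)\,dx$ and $Q_2=\int_\Om|\P\n_2u_2-\n_1u_1|^2\,dx$, where $\P$ is parallel transport along connecting geodesics: in that gauge the top-order term becomes $\int\<\Phi,\n_1(J_0\n_{1,i}\Phi_i)\>\,dx$, which vanishes identically by antisymmetry of $J_0$ after one integration by parts (with the Neumann condition killing the boundary term), and the remaining commutator terms are genuinely lower order, giving $\frac{d}{dt}(Q_1+Q_2)\le C(Q_1+Q_2)$. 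Without this (or an equivalent device), your uniqueness step fails at the stated $H^3$ regularity.
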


\begin{rem}\
	
\begin{enumerate}	
\item Theorem \ref{thm1} still holds for the case of $\Om$ being a smooth bounded domain in $\mathbb{R}^2$ (also see \cite{PWW1, PWW2}).

\item If the initial map $u_0$ satisfies some furthermore compatibility and applicable regularity conditions, we also prove \eqref{S-eq} admits a unique smooth solution $u\in L^\infty([0,T_0], H^3(\Om,\U^2))$ with initial value $u_0$. We will present these results in another paper \cite{C-W1}.

\item In forthcoming papers we will extend the results in Theorem \ref{thm1} to the case the starting manifold of the Schr\"odinger flow is a $2$ or $3$ dimensional compact Riemannian maniflod with smooth boundary and the target manifold is a compact K\"ahler manifold.

\item It is still open for the case the dimension of the starting manifold is greater than three.
\end{enumerate}
\end{rem}

On the other hand, we recall that  the self-induced vector field is defined by
\[h_d(u):=-\n\int_\Om\n N(x-y)u(u)\,dx\]
in the sense of distributions. Hence, the following estimates of $h_d$ is a fundamental result in theory of singular integral operators, its proof can be found in \cite{CF,CJ, C-W}.

\begin{prop}\label{es-h_d}
	Let $p\in(1,\infty)$ and $\Om$ be a smooth bounded  domain in $\Real^{3}$. Assume that $u \in W^{k,p}(\Om,\Real^{3})$ for $k\in \mathbb{N}$. Then, the restriction of $h_d(u)$ to $\Om$ belongs to $W^{k,p}(\Om, \Real^{3})$. Moreover, there exists constants $C_{k,p}$, which is independent of $u$, such that
	$$\lVert h_d(u)\rVert_{W^{k,p}(\Om)}\leq C_{k,p}\lVert u\rVert_{W^{k,p}(\Om)}.$$
	In fact, $h_{d}:W^{k,p}(\Om, \Real^{3})\to W^{k,p}(\Om, \Real^{3})$ is a  bounded linear  operator.
\end{prop}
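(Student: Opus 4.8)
The plan is to reduce everything to two classical ingredients: the $L^p$ boundedness of second‑order Riesz transforms, and the mapping properties of boundary layer potentials on a smooth hypersurface. Write $\tilde u=\chi_\Om u$ for the extension of $u$ by zero to $\Real^3$, and note that, directly from the definition and $\Delta N=\de_0$, one has $h_d(u)=-\n\bigl(\operatorname{div}(N*\tilde u)\bigr)$, so that every component of $h_d(u)$ is a finite sum of operators of the form $f\mapsto\p_i\p_j(N*f)$ applied to the components of $\tilde u$. Since the Fourier multiplier of $\p_i\p_j(N*\,\cdot\,)$ is $-\xi_i\xi_j/|\xi|^2$ — bounded, smooth off the origin and $0$‑homogeneous — the Mikhlin multiplier theorem (equivalently, Calderón–Zygmund theory for the standard kernel $\n^2N$) gives boundedness on $L^p(\Real^3)$ for every $p\in(1,\infty)$. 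Restricting to $\Om$ and using $\|\tilde u\|_{L^p(\Real^3)}=\|u\|_{L^p(\Om)}$ settles the case $k=0$; linearity of $h_d$, and hence boundedness of $h_d\colon L^p(\Om)\to L^p(\Om)$, is immediate.

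For $k\ge1$ I would differentiate under the potential, keeping careful track of the fact that $\tilde u$ is not in $W^{k,p}(\Real^3)$ because of its jump across $\p\Om$. In the sense of distributions on $\Real^3$,
\[
\n^k h_d(u)=-\n^2\bigl(N*\n^k\tilde u\bigr),\qquad \n^k\tilde u=\widetilde{\n^k u}+\Si,
\]
where $\widetilde{\n^k u}$ is the zero‑extension of $\n^k u\in L^p(\Om)$ and $\Si$ is a finite sum of distributions supported on $\p\Om$, each being a tangential/normal differential operator of order $\le k-1$ applied to a surface density of the form $(\n^j u)|_{\p\Om}$ (times smooth geometric factors: the unit normal and the curvature of $\p\Om$), with $0\le j\le k-1$. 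The contribution of $\widetilde{\n^k u}$ is handled by the $k=0$ step, with bound $C\|\n^k u\|_{L^p(\Om)}\le C\|u\|_{W^{k,p}(\Om)}$. Each contribution of $N*\Si$ is, after moving derivatives onto the density, a single‑, double‑ or higher‑order‑normal‑derivative layer potential with a density built from derivatives of $(\n^j u)|_{\p\Om}$; invoking the trace theorem $W^{k,p}(\Om)\to W^{k-1/p,p}(\p\Om)$ together with the classical $L^p$‑Sobolev mapping properties of these layer potentials on the smooth boundary $\p\Om$ (where they are classical pseudodifferential operators), one checks that the normal and tangential orders compensate so that $\n^2$ of each such term again lies in $L^p(\Om)$ with norm $\le C\|u\|_{W^{k,p}(\Om)}$. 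Summing the finitely many terms and over all derivatives of order $\le k$ yields $\|h_d(u)\|_{W^{k,p}(\Om)}\le C_{k,p}\|u\|_{W^{k,p}(\Om)}$, and the operator statement follows.

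A more PDE‑flavoured alternative, useful as a cross‑check, is to note that $w:=N*\tilde u$ solves the elliptic transmission problem $\Delta w=u$ in $\Om$, $\Delta w=0$ in $\Real^3\setminus\bar\Om$, with $w$ and $\n w$ continuous across $\p\Om$ and $w\to0$ at infinity, and that $h_d(u)=-\n^2w$. Subtracting $N*(Eu)$ for a Sobolev extension $Eu\in W^{k,p}(\Real^3)$ of $u$ — whose Hessian is controlled in $W^{k,p}(\Real^3)$ by the $k=0$ step — leaves a function harmonic in $\Om$, and the regularity $w|_\Om\in W^{k+2,p}(\Om)$ follows from $L^p$ elliptic estimates up to the boundary once one knows its Dirichlet trace lies in $W^{k+2-1/p,p}(\p\Om)$; the latter is exactly the regularity statement for this transmission problem with smooth interface.

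The main obstacle is precisely the boundary analysis in the $k\ge1$ step. Because $\tilde u$ has a jump, the two derivative gains of the Newtonian potential are not available ``for free,'' and one is forced to control the surface layers produced on $\p\Om$. This is where the smoothness of $\p\Om$ enters in an essential way — through the trace theorem and through the mapping properties of single, double and higher‑order layer potentials on a smooth hypersurface; once these classical facts are in place, the combinatorics of differentiating the zero‑extension and the summation over terms are routine. This is, in outline, the argument carried out in \cite{CJ, C-W}.
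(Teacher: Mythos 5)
The paper does not actually prove Proposition \ref{es-h_d}: it states the result and defers entirely to \cite{CF,CJ,C-W}, so there is no in-paper argument to compare against. Judged on its own merits, your outline is correct and is essentially the standard proof that those references carry out (there for $p=2$; the general $p$ follows the same pattern). The $k=0$ step is exactly right: $h_d(u)=-\n\,\mathrm{div}(N*\tilde u)$ is a sum of second-order Riesz transforms applied to the zero-extension, and Mikhlin/Calder\'on--Zygmund gives $L^p(\Real^3)$-boundedness, hence $L^p(\Om)\to L^p(\Om)$ after restriction. For $k\ge 1$ your bookkeeping is the right one, and the order count does close: a term in $\Si$ carrying $k-1-j$ transverse derivatives of a surface density $(\n^j u)|_{\p\Om}\in W^{k-j-1/p,p}(\p\Om)$ produces, under $N*$, a multipole layer landing in $W^{2,p}(\Om)$, so two more gradients leave it in $L^p(\Om)$ with norm $\lesssim\|u\|_{W^{k,p}(\Om)}$ via the trace theorem. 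The one place where I would push back is the ``PDE-flavoured alternative'': after subtracting $N*(Eu)$ the remainder $N*(\tilde u-Eu)$ is harmonic in $\Om$, but its Dirichlet trace being in $W^{k+2-1/p,p}(\p\Om)$ is \emph{not} obtainable from one-sided elliptic theory alone --- it is precisely the transmission-regularity statement you would be trying to prove, so as written that paragraph is circular unless you import transmission regularity as a black box (which you do acknowledge). Since you present it only as a cross-check and your primary layer-potential argument is self-contained modulo classical facts, the proof stands; it is, if anything, more explicit than what the paper offers, at the cost of invoking the $L^p$-Sobolev mapping properties of multipole layer potentials on a smooth hypersurface, which deserve a precise citation if this were to be written out in full.
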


With Proposition \ref{es-h_d} at hand, we take an almost same argument as that in the proof of Theorem \ref{thm1} to conclude the following result.
 \begin{thm}\label{thm2}
 Let $\Omega$ be a smooth bounded  domain in $\mathbb{R}^3$. Assume that the initial value maps $u_0\in H^{3}(\Omega, \U^2)$ with $\frac{\p u_0}{\p \nu}|_{\Om}=0$, and $\Phi\in C^\infty(\U^2)$. Then, there exist a positive number $T_0>0$ depending only on $\norm{u_0}_{H^3}$ and the geometry of $\Omega$ such that the initial-Neuman boundary value problem of equation \eqref{LL-eq} admits a unique regular solution $u\in L^\infty([0,T_0], H^3(\Om,\U^2))$ with initial value $u_0$.
 \end{thm}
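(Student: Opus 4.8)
The plan is to deduce Theorem \ref{thm2} by repeating, with only cosmetic changes, the parabolic-perturbation and intrinsic geometric energy argument that proves Theorem \ref{thm1}, the point being that the two extra terms in the local field
\[
h=\De u+h_d(u)-\n_u\Phi
\]
are harmless lower-order perturbations. Rewriting \eqref{LL-eq} as $\p_t u=-u\times\De u-u\times h_d(u)+u\times\n_u\Phi$, the principal part is precisely the Schr\"odinger flow operator of Theorem \ref{thm1}; the term $u\times\n_u\Phi$ is a smooth, purely algebraic (zeroth order) function of $u$ since $\Phi\in C^\infty(\U^2)$; and, by Proposition \ref{es-h_d}, $u\mapsto u\times h_d(u)$ is a bounded operator on every $H^k(\Om,\Real^3)$, hence likewise of order zero in the derivatives of $u$. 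In particular neither term introduces new derivatives of $u$, so the Neumann boundary condition and the structure of the boundary terms in the energy estimates are unaffected.

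First I would set up the same parabolic regularisation as in the proof of Theorem \ref{thm1}, now carrying the full local field $h_\ep:=\De u_\ep+h_d(u_\ep)-\n_{u_\ep}\Phi$, namely the Landau--Lifshitz--Gilbert-type flow $\p_t u_\ep=-\ep\,u_\ep\times(u_\ep\times h_\ep)-u_\ep\times h_\ep$ with $\p u_\ep/\p\nu|_{\p\Om}=0$ and $u_\ep(\cdot,0)=u_0$; its short-time solvability in the $H^k$ scale is exactly the Carbou--Jizzini result \cite{CJ}, whose model already contains the anisotropy field and whose method accommodates the nonlocal $h_d$. Next I would run the intrinsic geometric energy estimates of Theorem \ref{thm1} on $u_\ep$: differentiating the relevant intrinsic quantities, integrating by parts, and invoking the observations on the boundary terms, one reaches a closed differential inequality for the geometric energy $\sum_{j\le 3}\int_\Om|\n^{j}u_\ep|^2$, uniformly in $\ep$. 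The only new contributions come from $h_d(u_\ep)$ and $\n_{u_\ep}\Phi$; by Proposition \ref{es-h_d}, the chain rule, smoothness of $\Phi$, and the embedding $H^2(\Om)\hookrightarrow L^\infty(\Om)$ in dimension three, each of them is bounded by a polynomial in the geometric energy -- exactly the form already absorbed in Theorem \ref{thm1}. Hence the uniform bound $\sup_{[0,T_0]}\norm{u_\ep}_{H^3}\le C(\norm{u_0}_{H^3},\Om)$ persists with $T_0$ independent of $\ep$.

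With these uniform bounds, the passage to the limit $\ep\to 0$ is carried out as in Theorem \ref{thm1}: weak-$*$ compactness in $L^\infty([0,T_0],H^3)$ and the Aubin--Lions lemma yield a limit $u$ solving \eqref{LL-eq} with values in $\U^2$ and satisfying the Neumann and initial conditions, the nonlocal term passing to the limit by continuity of $h_d$ on $L^2$ (Proposition \ref{es-h_d} with $k=0$). Uniqueness follows from the same energy method applied to the difference $w=u_1-u_2$ of two solutions: after integration by parts the extra terms contribute quantities schematically of the form $\int_\Om|h_d(u_1)-h_d(u_2)|\,|\n^{j}w|$ and $\int_\Om|\n_{u_1}\Phi-\n_{u_2}\Phi|\,|\n^{j}w|$, both controlled by $\norm{w}$ in the appropriate norm via Proposition \ref{es-h_d} and the Lipschitz bound for $\n\Phi$, so that Gronwall's inequality closes the estimate.

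I expect no essential new obstacle: as in Theorem \ref{thm1}, the only delicate ingredient is the control of the geometric boundary terms produced by the integrations by parts, and this is exactly the same here since $h_d$ and $\n_u\Phi$ create no new normal derivatives of $u$ on $\p\Om$. The real work is bookkeeping -- checking that every multilinear geometric estimate in the proof of Theorem \ref{thm1} still closes after the two extra, lower-order, terms are appended.
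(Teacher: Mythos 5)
Your proposal is correct and follows exactly the route the paper intends: the paper itself disposes of Theorem \ref{thm2} by remarking that, with Proposition \ref{es-h_d} in hand, one repeats the proof of Theorem \ref{thm1} almost verbatim, treating $h_d(u)$ and $\n_u\Phi$ as zeroth-order perturbations that are bounded on each $H^k$ and hence absorbed into the polynomial right-hand sides of the geometric energy inequalities. Your elaboration of where these terms enter (the regularised flow, the uniform estimates, the limit passage, and the uniqueness Gronwall argument) is precisely the bookkeeping the authors leave implicit.
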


As mentioned in above, the proof of Theorem \ref{thm1} 	is divided into three steps. In the first step, we consider the parabolic perturbation approximation to \eqref{S-eq}
\begin{equation}\label{PS-eq0}
\begin{cases}
\p_tu_\ep =\ep(\De u_\ep+|\n u_\ep|^2 u_\ep)+u_\ep\times\De u_\ep,\quad\quad&\text{(x,t)}\in\Om\times \Real^+,\\[1ex]
\frac{\p u_\ep}{\p \nu}=0, &\text{(x,t)}\in\p\Om\times \Real^+,\\[1ex]
u_\ep(x,0)=u_0: \Om\to \U^2,\quad \frac{\p u_0}{\p \nu}|_{\p \Om}=0.
\end{cases}
\end{equation}
Here $\ep\in(0, 1)$ is the perturbation constant. Recall that the local well-posedness for this parabolic system is established recently in \cite{CJ}, which can be formulated as the following proposition (also see our recent work \cite{C-W}).

\begin{prop}\label{str-sol}
Suppose that $u_0\in H^3(\Om, \U^2)$, there exists a positive number $T_\ep$ depending only on $\ep$ and $ \norm{u_0}_{H^2}$ such that the equation \eqref{PS-eq0} admits a unique regular solution $u_\ep$ on $\Om\times [0,T_\ep)$ which satisfies for any $T<T_\ep$ that
\begin{itemize}
	\item[$(1)$] $|u_\ep(x,t)|=1$ for all $(x,t)\in [0,T]\times\Om$;
	\item[$(2)$] $u_\ep\in L^\infty([0,T],H^3(\Om))\cap L^{2}([0,T],H^4(\Om))$.
\end{itemize}
\end{prop}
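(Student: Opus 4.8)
The plan is to recognize \eqref{PS-eq0} as the Landau--Lifshitz--Gilbert equation: writing $P(u)w=w-(u\cdot w)u$ for the orthogonal projection onto $T_u\U^2$ and using that, on the sphere, $\De u+|\n u|^2u=P(u)\De u$ and $u\times\De u=u\times P(u)\De u$, the right-hand side of \eqref{PS-eq0} equals $(\ep\,\mathrm{Id}+u\times)P(u)\De u$. The zeroth-order operator $L_u:=\ep\,\mathrm{Id}+u\times$ satisfies $\<L_uv,v\>=\ep|v|^2$ for every $v$ and has eigenvalues $\ep$ and $\ep\pm i|u|$ on $\Real^3$; hence, as long as $u$ stays bounded, the system is uniformly strongly parabolic with principal part $L_u\De$. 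Since this is exactly the structure treated by Carbou--Jizzini in \cite{CJ} (their $\al$ corresponds to our coefficient of $u\times\De u$, their damping coefficient to our $\ep$), the cleanest route is to invoke their local well-posedness theorem — equivalently the version in \cite{C-W} — after checking that the $H^3$-level compatibility condition reduces to the stated $\frac{\p u_0}{\p\nu}|_{\p\Om}=0$. Below I sketch the self-contained argument, since the same scheme must reappear when we treat \eqref{S-eq}.

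First I would establish local existence by a fixed point (or, equivalently, a Galerkin approximation) for this quasilinear parabolic system. Freezing the coefficient, given $v$ one solves the \emph{linear} Neumann problem
\[
\p_t u-\big(\ep\,\mathrm{Id}+v\times\big)\De u=\ep|\n v|^2v,\qquad \frac{\p u}{\p\nu}\Big|_{\p\Om}=0,\qquad u(\cdot,0)=u_0 ,
\]
which by parabolic maximal regularity is uniquely solvable; because $H^3(\Om)\hookrightarrow C^1(\overline\Om)$ in dimension three, the coefficient $\ep\,\mathrm{Id}+v\times$ is continuous in $(x,t)$ and uniformly strongly elliptic with real part $\ep\,\mathrm{Id}$, so the solution map $\Phi:v\mapsto u$ is well defined. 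Estimating $\Phi(v_1)-\Phi(v_2)$ — again using $H^3\hookrightarrow C^1$ to control the difference of coefficients and of the forcing term — shows that $\Phi$ contracts a suitable ball for $T=T_\ep$ small, depending only on $\ep$ and $\norm{u_0}_{H^2}$ (using the constraint below together with persistence of regularity), and a bootstrap upgrades the fixed point $u_\ep$ to the class $L^\infty([0,T_\ep),H^3(\Om))\cap L^2_{\mathrm{loc}}([0,T_\ep),H^4(\Om))$.

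Next I would verify the constraint $|u_\ep|\equiv1$. Setting $w:=|u_\ep|^2-1$ and using $\De|u_\ep|^2=2u_\ep\cdot\De u_\ep+2|\n u_\ep|^2$ together with $(u_\ep\times\De u_\ep)\cdot u_\ep=0$, equation \eqref{PS-eq0} gives
\[
\p_t w=\ep\,\De w+2\ep|\n u_\ep|^2\,w,\qquad \frac{\p w}{\p\nu}\Big|_{\p\Om}=0,\qquad w(\cdot,0)=0 ,
\]
the boundary condition being inherited from $\frac{\p u_\ep}{\p\nu}=0$; since $|\n u_\ep|^2\in L^\infty$, a Gronwall estimate for $\tfrac{d}{dt}\norm{w}^2_{L^2(\Om)}$ forces $w\equiv0$ on $[0,T_\ep)$. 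Persistence of the $L^\infty H^3\cap L^2 H^4$ regularity would then come from differentiating \eqref{PS-eq0}, testing against $\De^k u_\ep$ for $k\le3$, and using the $\ep\De$-dissipation to close $\tfrac{d}{dt}\norm{u_\ep}^2_{H^3}+c\ep\norm{u_\ep}^2_{H^4}\le C(\norm{u_\ep}_{H^3})$; uniqueness would follow from an $L^2$ energy estimate for the difference of two solutions, which solves a linear parabolic equation.

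The step I expect to be the main obstacle is the boundary. The top-order term $u\times\De u$ carries the unknown in its coefficient, so the energy estimates above cannot be done naively, and each integration by parts produces terms on $\p\Om$ that are \emph{not} annihilated by $\frac{\p u_\ep}{\p\nu}=0$ alone; one must derive the induced higher-order boundary identities (which involve the second fundamental form of $\p\Om$) and combine them with the $H^3$ compatibility of the data to close the estimates. This is precisely the delicate bookkeeping performed in \cite{CJ}, and it is this analysis — in a more subtle form, and uniformly in $\ep$ — that we shall have to carry out when passing to the limit $\ep\to0$ for \eqref{S-eq}.
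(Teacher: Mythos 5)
Your proposal is correct and matches the paper's treatment: the paper proves this proposition simply by invoking the local well-posedness results of Carbou--Jizzini \cite{CJ} (and the companion work \cite{C-W}), which rest on exactly the scheme you sketch --- uniform $\ep$-parabolicity of $(\ep\,\mathrm{Id}+u\times)\De$, a Galerkin/fixed-point construction, preservation of $|u_\ep|\equiv1$ via the equation for $|u_\ep|^2-1$, and energy estimates closed with the boundary compatibility conditions. Your identification of the boundary terms as the delicate point is also consistent with how the paper handles the extension to the maximal existence time in its Theorem 3.1.
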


In Section \ref{s1}, we will extend $T_\ep$ stated in this proposition to the maximal existence time of solution $u_\ep$ satisfying the above properties.
\medskip

In the second step, we get the uniform $H^3$-energy estimates of $u_\ep$ (``uniform'' means  ``independent of $\ep\in(0, 1)$''). Our basic idea is to make full use of the integrality of complex structure to get higher geometric energy estimates of solution $u_\ep$ to
\[\p_t u=\ep \tau(u)+J(u)\tau(u)\]
as in \cite{DW}. Inspired by this idea we consider the corresponding equation of $\p_t u_\ep$ from the viewpoint of intrinsic geometry. For simplicity we let $M$ be a bounded domain $\Om\subset \Real^n$ and choose the natural coordinates $x=\{x_1,\cdots,x_n\}$ on $\Om$. Noting that $\n J=0$, we take a simple calculation to show
\begin{equation}\label{wave1}
\begin{aligned}
&\n_{\p t}\p_t u_\ep+(1-\ep^2)\n_i\n_i(\tau(u_\ep))-2\ep J(u_\ep)\n_i\n_i(\tau(u_\ep))\\
=&\ep^2R^N(\tau(u_\ep), \n_i u_\ep)\n_i u_\ep+\ep J(u_\ep)R^N(\tau(u_\ep),\n_i u_\ep)\n_i u_\ep\\
&+\ep R^N(J(u_\ep)\tau(u_\ep),\n_i u_\ep)\n_i u_\ep+J(u_\ep)R^N(J(u_\ep)\tau(u_\ep),\n_i u_\ep)\n_i u_\ep.
\end{aligned}
\end{equation}
Here, $\n_i=\n^N_{\frac{\p u}{\p x_i}}$ and $R^N$ is the Riemannian curvature tensor of $N$.

In particular, if the target manifold $N=\U^2$ with the complex struture $J(u)=u\times$, then it is not difficult to show that $u_\ep$ satisfies the following extrinsic formula (to see Formula \eqref{w-eq2} for precise form)
\begin{equation}\label{wave2}
\begin{aligned}
&\frac{\p^2 u_\ep}{\p t^2}+(1-\ep^2)\De \tau(u_\ep)-2\ep\De(u_\ep\times \De u_\ep)\\
=&\ep\{\frac{\p}{\p t}(|\n u_\ep|^2u_\ep)-2\textnormal{\mbox{div}}(\n u_\ep\dot{\times} \n^2 u_\ep)+u_\ep\times \De(|\n u_\ep|^2u_\ep)+|\n u_\ep|^2u_{\ep}\times \De u_\ep\}\\
&+\De (|\n u_\ep|^2 u_\ep)-2\textnormal{\mbox{div}}^2((\n u_\ep\dot{\otimes}\n u_\ep)u_\ep)+2\textnormal{\mbox{div}}((\n u_\ep\dot{\otimes}\n u_\ep)\cdot\n u_\ep)-\textnormal{\mbox{div}}(|\n u_\ep|^2\n u_\ep),
\end{aligned}
\end{equation}
by using the fact $|u|=1$ and
\[R^{\U^2}(X, Y)Z=\<Y,Z\>X-\<X,Z\>Y\]
for any $X,Y$ and $Z$ in $\Ga(T\U^2)$.

Let $\ep=0$. After contracting some terms in the above extrinsic formula, we obtain again the nice equation in \cite{SSB}(also to see Formula \eqref{w-inq5})
\begin{equation}\label{wave3}
\begin{aligned}
\frac{\p^2u }{\p t^2}+\De^2u
=&-2\textnormal{\mbox{div}}^2((\n u\dot{\otimes}\n u)u)+2\textnormal{\mbox{div}}((\n u\dot{\otimes}\n u)\cdot\n u)\\
&-\textnormal{\mbox{div}}(|\n u|^2\n u).
\end{aligned}
\end{equation}

Then, by taking $\p_t u_\ep$ and $\p_t \tau(u_\ep)$ as test functions of equation \eqref{wave2} respectively, a delicate computation shows the desired $H^3$-estimates of $u_\ep$ holds true on a uniform time interval $[0,T_0]$ with $0<T_0<T_\ep$. Therefore, by letting $\ep\to 0$, the existence part of Theorem \ref{thm1} is proved.
\medskip

In the last step, we show the uniqueness of the solution $u$ we obtained  by adopting the intrinsic energy method introduced in \cite{Mc,S-W}.

\subsection{A Related Problem}

Recently, Chern et al \cite{Chern} described a new approach for the purely Eulerian simulation of incompressible fluids. In their setting, the fluid state is represented by a $\mathbb{C}^2$-valued wave function evolving under the Schr\"odinger equation subject to incompressibility constraints. The underlying dynamical system is Hamiltonian and governed by the kinetic energy of the fluid together with an energy of Landau-Lifshitz type. They deduced the following
$$\partial_t u + \mathcal{L}_vu=\tilde{\alpha}(u\times\Delta u),$$
where $\tilde{\alpha}$ is a real number, $u:\Omega\times[0, T)\rightarrow\mathbb{S}^2$ and $\mathcal{L}_v$ is the Lie derivative with respect to the field $v$ on $\Omega$ with $\mbox{div}(v)\equiv0$. They called this dynamical system as incompressible Schr\"odinger flow.

If $\Omega\subset\mathbb{R}^3$ is a smooth bounded domain, $\mathcal{L}_v$ is just the $\nabla_v$, and hence the above can be written as
$$\partial_t u + \nabla_vu=\tilde{\alpha}(u\times\Delta u),$$
with $\mbox{div}(v)\equiv 0$ on $\Omega$.

For simplicity, we let $\tilde{\alpha}=1$ and consider the following initial-Neumann boundary value problem of incompressible Schr\"odinger flow
\begin{equation}\label{IS-eq}
\begin{cases}
\p_tu + \nabla_vu=u\times\De u,\quad\quad&\text{(x,t)}\in\Om\times \Real^+,\\[1ex]
\frac{\p u}{\p \nu}=0, &\text{(x,t)}\in\p\Om\times \Real^+,\\[1ex]
u(x,0)=u_0: \Om\to \U^2,\quad \frac{\p u_0}{\p \nu}|_{\p \Om}=0.
\end{cases}
\end{equation}
Here we always suppose that the field $v$ is smooth enough and $\mbox{div}(v)\equiv 0$ on $\Omega$.

In order to prove the local in time well-posedness, the first thing that needs to be done is to establish the local existence to the following
\begin{equation}\label{IS-eq*}
\begin{cases}
\p_tu + \nabla_vu=\epsilon u\times(u\times\De u) + u\times\De u,\quad\quad&\text{(x,t)}\in\Om\times \Real^+,\\[1ex]
\frac{\p u}{\p \nu}=0, &\text{(x,t)}\in\p\Om\times \Real^+,\\[1ex]
u(x,0)=u_0: \Om\to \U^2,\quad \frac{\p u_0}{\p \nu}|_{\p \Om}=0.
\end{cases}
\end{equation}
Then, by letting $\ep\rightarrow 0$ we can also obtain some similar results with that for (\ref{S-eq}) and the proof goes almost the same as that stated in Theorem \ref{thm1} except for that we need to treat the term $\nabla_v u$. Since $\mbox{div}(v)\equiv 0$ on $\Omega$, and if we additionally provide that $v$ satisfies the boundary compatibility condition
\[\<v,\nu\>|_{\p\Om\times \Real^+}\equiv 0,\]
the term $\nabla_v u$ does not cause any essential difficulties.
Here we do not elucidate the details.

It is worthy to point out that Huang \cite{H} has ever concerned the coupled system of Navier-Stokes equation and incompressible Schr\"odinger flow defined on a closed manifold or $\mathbb{R}^n$, and shown the local in time existence of the initial value problem of this system in some suitable Sobolev spaces.

\medskip
The rest of our paper is organized as follows. In Section \ref{s: pre}, we introduce the basic notations on Sobolev space and some critical preliminary lemmas. In Section \ref{s1} and Section \ref{s2}, we give the proof of local existence of regular solution to \eqref{S-eq} stated in Theorem \ref{thm1}. The uniqueness will built up in Section \ref{s3}. We close with two appendices. First, this is the locally regular estimates of the approximate solution $u_\ep$. Secondly, it is the characterisation and formulation of the Schr\"odinger flow in moving frame and parallel transport.

\section{Preliminary}\label{s: pre}
In this section, we first recall some notations on Sobolev spaces, which will be used in whole context. Let $u=(u_1,u_2,u_3):\Om\to\U^{2}\hookrightarrow\Real^3$ be a map. We set
$$H^{k}(\Om,\U^{2})=\{u\in W^{k,2}(\Om,\Real^{3}):|u|=1\,\,\text{for a.e. x}\in \Om\}.$$
Moreover, let $(B,\norm{.}_B)$ be a Banach space and $f:[0,T]\to B$ be a map. For any $p>0$ and $T>0$, we define
\[\norm{f}_{L^p([0,T], B)}:=\(\int_{0}^{T}\norm{f}^p_{B}dt\)^{\frac{1}{p}},\]
and
\[L^p([0,T],B):=\{f:[0,T]\to B:\norm{f}_{L^p([0,T],B)}<\infty\}.\]
In particular, we denote
\[L^{p}([0,T],H^{k}(\Om,\U^{2}))=\{u\in L^{p}([0,T],H^{k}(\Om,\Real^{3})):|u|=1\,\,\text{for a.e. (x,t)}\in \Om\times[0,T]\},\]
where $k,\,l\in \mathbb{N}$  and $p\geq 1$.

Next, we need to recall some crucial preliminary lemmas which we will use later. The $L^2$ theory of Laplace operator with Neumann boundary condition implies the following Lemma of equivalent norms, to see \cite{Weh}.
\begin{lem}\label{eq-norm}
Let $\Om$ be a bounded smooth domain in $\Real^{m}$ and $k\in \mathbb{N}$. There exists a constant $C_{k,m}$ such that, for all $u\in H^{k+2}(\Om)$ with $\frac{\p u}{\p \nu}|_{\p\Om}=0$,
\begin{equation}\label{eq-n}
\norm{u}_{H^{2+k}(\Om)}\leq C_{k,m}(\norm{u}_{L^{2}(\Om)}+\norm{\De u}_{H^{k}(\Om)}).
\end{equation}	
\end{lem}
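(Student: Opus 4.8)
The plan is to recognize Lemma \ref{eq-norm} as the classical a priori estimate for the Neumann realization of the Laplacian on $\Om$ and to establish it by induction on $k$. Observe first that the content of the statement lies in the fact that the single boundary condition $\frac{\p u}{\p\nu}|_{\p\Om}=0$ already suffices; one does not assume the higher compatibility relations $\frac{\p}{\p\nu}(\De^{j}u)|_{\p\Om}=0$, so a direct argument via the eigenfunction expansion of the Neumann Laplacian is not available and one has to work with the equation $\De u=f$ itself. Throughout, $C$ will denote a constant depending only on $k$, $m$ and the geometry of $\Om$ (through a fixed finite atlas of boundary charts and a subordinate partition of unity), which is exactly the dependence asserted.

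For the base case $k=0$ one uses the standard second-order boundary regularity for the Neumann problem: localizing near $\p\Om$, flattening the boundary by a smooth diffeomorphism, running the tangential difference-quotient argument (which respects the conormal boundary condition) and recovering the purely normal second derivative algebraically from the equation, then gluing with the partition of unity, yields $\norm{u}_{H^{2}(\Om)}\leq C(\norm{\De u}_{L^{2}(\Om)}+\norm{u}_{H^{1}(\Om)})$. By Ehrling's lemma, applied to the embeddings $H^{2}(\Om)\subset H^{1}(\Om)\subset L^{2}(\Om)$ with the first one compact, for every $\de>0$ there is $C_\de$ with $\norm{u}_{H^{1}}\leq\de\norm{u}_{H^{2}}+C_\de\norm{u}_{L^{2}}$; choosing $\de$ small and absorbing gives \eqref{eq-n} for $k=0$.

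Assume now that \eqref{eq-n} holds with $k$ replaced by $k-1$, and let $u\in H^{k+2}(\Om)$ with $\frac{\p u}{\p\nu}|_{\p\Om}=0$. Using a finite cover of $\overline{\Om}$ by interior balls and boundary balls with a subordinate partition of unity, interior elliptic regularity handles the interior pieces, while on each boundary piece one again flattens $\p\Om$ to a half-ball: the localized map then solves a uniformly elliptic equation $Lv=g$, where $g$ is controlled by $\norm{\De u}_{H^{k}(\Om)}$ together with $v$ itself up to order $k+1$, subject to a conormal boundary condition on the flat part. The tangential difference quotients of $v$ satisfy an equation of the same type with the same boundary condition, which bounds every derivative of $v$ of order $k+2$ having at least one tangential index; the remaining purely normal derivative $\p_{x_m}^{k+2}v$ is then obtained by differentiating $Lv=g$ in $x_m$ and dividing by the coefficient of $\p_{x_m}^{2}$, which is bounded below by uniform ellipticity. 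Transporting back and summing the local estimates, one gets $\norm{u}_{H^{k+2}(\Om)}\leq C(\norm{\De u}_{H^{k}(\Om)}+\norm{u}_{H^{k+1}(\Om)})$, and a final application of Ehrling's lemma (now with $H^{k+2}(\Om)\subset H^{k+1}(\Om)\subset L^{2}(\Om)$, the first embedding compact) absorbs $\norm{u}_{H^{k+1}}$ into $\de\norm{u}_{H^{k+2}}+C_\de\norm{u}_{L^{2}}$, which closes the induction.

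The routine ingredients here are the partition of unity, the flattening of the boundary, and the interpolation/absorption via Ehrling's lemma; the one genuinely delicate step is the boundary estimate, where the tangential difference quotients control only the mixed top-order derivatives, so that the purely normal top-order derivative must be extracted from the PDE, which forces one to keep careful track of the commutators between the cutoff, the flattening diffeomorphism and $L$, and to use uniform ellipticity. Alternatively, the whole lemma may simply be quoted from the classical Neumann elliptic regularity theory (e.g. \cite{Weh}, or Gilbarg--Trudinger), in which case only the one-line absorption argument via Ehrling's lemma remains to be added.
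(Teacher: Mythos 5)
Your proof is correct, and it is more than the paper offers: the paper does not prove Lemma \ref{eq-norm} at all, but simply quotes it from the $L^2$ Neumann theory with a reference to \cite{Weh}. Your argument is the standard one behind that citation — localization with a partition of unity, boundary flattening, tangential difference quotients preserving the conormal condition, algebraic recovery of the purely normal top-order derivative from the equation, and absorption of the intermediate norm by Ehrling's lemma — and your opening remark is a genuinely useful observation in the context of this paper: since only $\frac{\p u}{\p \nu}|_{\p\Om}=0$ is assumed and not $\frac{\p}{\p\nu}(\De^j u)|_{\p\Om}=0$ for $j\geq 1$, one cannot read the estimate off the Neumann eigenfunction expansion for $k\geq 1$ (the expansion the authors use elsewhere for their Galerkin scheme), so the PDE-based a priori estimate is really needed. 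One small structural remark: your induction is vacuous as written, since in the step for $k$ you absorb $\norm{u}_{H^{k+1}}$ directly via Ehrling's lemma into $\de\norm{u}_{H^{k+2}}+C_\de\norm{u}_{L^2}$ and never invoke the case $k-1$; the argument is really a direct one for each $k$, consisting of the estimate $\norm{u}_{H^{k+2}(\Om)}\leq C(\norm{\De u}_{H^{k}(\Om)}+\norm{u}_{H^{k+1}(\Om)})$ followed by interpolation. That is not an error — the induction scaffolding is just unnecessary. The absorption step is legitimate here because $u\in H^{k+2}(\Om)$ is assumed, so the term being absorbed is finite.
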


In particular, the above lemma implies that we can define the $H^{k+2}$-norm of $u$ as follows
$$\norm{u}_{H^{k+2}(\Om)}:=\norm{u}_{L^2(\Om)}+\norm{\De u}_{H^k(\Om)}.$$
	
In order to show the uniform estimates and the convergence of solutions to the approximate equation constructed in the coming sections, we also need to use the Gronwall inequality and the classical compactness results in \cite{BF, Sim}.
	
\begin{lem}\label{Gron-inq}
Let $f: \Real^+\to \Real^+$ be a nondecreasing continuous function such that $f>0$ on $(0,\infty)$ and $\int_{1}^{\infty}\frac{1}{f}dx<\infty$. Let $y$ be a continuous function which is nonnegative on $\Real^+$ and let $g$ be a nonnegative function in $L^{1}_{loc}(\Real^+)$. We assume that there exists a positive number $y_0>0$ such that for all $t\geq0$, we have the inequality
\[y(t)\leq y_0+\int_{0}^{t}g(s)ds+\int_{0}^{t}f(y(s))ds.\]
Then, there exists a positive number $T^*$ depending only on $y_0$ and $f$, such that for all $T<T^*$, there holds true
\[\sup_{0\leq t\leq T}y(t)\leq C(T,y_0)\]
for some constant $C(T,y_0)$.
\end{lem}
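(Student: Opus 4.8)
The statement is a classical comparison lemma of Osgood--Bihari type, and the plan is to reduce the integral inequality to a scalar differential inequality and then dominate it by the autonomous ODE $\dot w=f(w)$, whose finite-time blow-up is precisely quantified by the hypothesis $\int_1^\infty f^{-1}<\infty$. Fix $T>0$, set $M:=\int_0^T g(s)\,ds$ (finite since $g\in L^1_{loc}(\Real^+)$), and define
\[
\zeta(t):=(y_0+M)+\int_0^t f(y(s))\,ds,\qquad t\in[0,T].
\]
Since $y$ and $f$ are continuous, $s\mapsto f(y(s))$ is continuous, so $\zeta\in C^1([0,T])$ is nondecreasing with $\zeta(t)\ge y_0+M>0$; and the hypothesis together with $\int_0^t g\le M$ gives $y(t)\le\zeta(t)$ on $[0,T]$.

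First I would derive the differential inequality. As $f$ is nondecreasing and $y(t)\le\zeta(t)$, we get $\zeta'(t)=f(y(t))\le f(\zeta(t))$, with $f(\zeta(t))>0$ because $\zeta(t)>0$. Introduce the primitive
\[
\Phi(v):=\int_{y_0+M}^{v}\frac{dx}{f(x)},\qquad v\ge y_0+M.
\]
The hypotheses make $\Phi$ well behaved: $f$ is continuous and strictly positive, hence bounded away from $0$ on each compact subinterval of $(0,\infty)$, while $\int_1^\infty f^{-1}<\infty$ by assumption; therefore $T^{**}:=\Phi(+\infty)=\int_{y_0+M}^\infty f^{-1}<\infty$, and, the integrand being positive, $\Phi$ is a strictly increasing continuous bijection of $[y_0+M,\infty)$ onto $[0,T^{**})$ with continuous increasing inverse $\Phi^{-1}$. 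Dividing $\zeta'\le f(\zeta)$ by $f(\zeta)>0$ and integrating over $[0,t]$ (legitimate since $f\circ\zeta$ is continuous, so no Lipschitz assumption on $f$ is needed) yields $\Phi(\zeta(t))\le t$; hence for every $t<T^{**}$ we may invert to obtain $\zeta(t)\le\Phi^{-1}(t)<\infty$.

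To conclude, I would shrink $T$ a priori so that $M=\int_0^T g\le y_0$ — possible since $t\mapsto\int_0^t g$ is continuous and vanishes at $t=0$ — so that $T^{**}\ge\int_{2y_0}^\infty f^{-1}=:T^*>0$, a quantity depending only on $y_0$ and $f$. Then for all such $T<T^*$,
\[
\sup_{0\le t\le T}y(t)\ \le\ \sup_{0\le t\le T}\zeta(t)\ \le\ \Phi^{-1}(T)\ =:\ C(T,y_0).
\]
There is no genuine obstacle here, only two points needing care. The first is the finiteness of $T^{**}$ and the invertibility of $\Phi$: this is exactly where the superlinear-type condition $\int_1^\infty f^{-1}<\infty$ enters, and it is also why the estimate is necessarily only local in $t$ (for $f(x)=1+x^2$ it mirrors the finite-time blow-up of $w=\tan t$). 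The second is the mild circularity in pinning down the time interval, removed by first making $\int_0^T g$ small so that the blow-up threshold is governed purely by $y_0$ and $f$; if instead $g$ is known to be bounded, this step is immediate.
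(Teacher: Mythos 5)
Your argument is correct, and there is nothing in the paper to compare it against: the paper states this lemma without proof, citing it as a standard Gronwall-type inequality (it is essentially the Osgood/Bihari comparison lemma, in the form used by Carbou--Fabrie and Carbou--Jizzini). Your reduction --- majorize $y$ by the $C^1$ nondecreasing function $\zeta(t)=(y_0+M)+\int_0^t f(y(s))\,ds$, use monotonicity of $f$ to get $\zeta'\le f(\zeta)$, and integrate $\zeta'/f(\zeta)\le 1$ against the primitive $\Phi$, whose finiteness at $+\infty$ is exactly the hypothesis $\int_1^\infty f^{-1}<\infty$ --- is the standard and correct route, and you correctly note that positivity and continuity of $f$ on compacts of $(0,\infty)$ make $\Phi$ a bijection onto $[0,T^{**})$ with no Lipschitz assumption needed. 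Two small points of bookkeeping. First, your final constant $\Phi^{-1}(T)$ still depends on $M=\int_0^T g$ through the lower limit of $\Phi$; to get a constant depending only on $T$ and $y_0$ as claimed, observe that once $M\le y_0$ one has $\Phi(v)\ge\Phi_0(v):=\int_{2y_0}^{v}f(x)^{-1}dx$, hence $\zeta(T)\le\Phi^{-1}(T)\le\Phi_0^{-1}(T)$, and the latter depends only on $T,y_0,f$. Second, the set of admissible $T$ in your argument is constrained both by $T<T^*=\int_{2y_0}^\infty f^{-1}$ and by $\int_0^T g\le y_0$, and the second constraint depends on $g$; this is an unavoidable looseness in the lemma as stated (for large $g$ the bound cannot depend only on $y_0$), and it is harmless here because in the paper's sole application of the lemma one has $g\equiv 0$.
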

	
\begin{lem}[Aubin-Lions-Simon compact Lemma, see Theorem II.5.16 in \cite{BF} or \cite{Sim}]\label{A-S}
Let $X\subset B\subset Y$ be Banach spaces with compact embedding $X\hookrightarrow B$. Let $1\leq p, \,q, \,r\leq \infty$. For $T>0$, we define
\[E_{p,r}=\{f\in L^{p}((0,T), X), \frac{d f}{dt}\in L^{r}((0,T), Y)\},\]
then the following properties holds
\begin{itemize}
\item[$(1)$] If $p< \infty$ and $p<q$, the embedding $E_{p,r}\cap L^q((0,T), B)$ in $L^s((0,T), B)$ is compact for all $1\leq s<q$.
\item[$(2)$] If $p=\infty$ and $r>1$, the embedding of $E_{p,r}$ in $C^0([0,T], B)$ is compact.
\end{itemize}
\end{lem}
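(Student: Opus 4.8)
The plan is to reduce the statement to two classical ingredients: an Ehrling--Lions type interpolation inequality that converts the compact embedding $X\hookrightarrow B$ into a quantitative estimate, and the vector-valued Kolmogorov--Riesz--Fr\'echet characterization of relative compactness in $L^s((0,T),B)$ together with the Arzel\`a--Ascoli criterion in $C([0,T],B)$. First I would record the Ehrling inequality: since $X$ embeds compactly into $B$ and $B$ continuously into $Y$, for every $\eta>0$ there is a constant $C_\eta$ with
\[
\norm{v}_B \leq \eta\,\norm{v}_X + C_\eta\,\norm{v}_Y \qquad \text{for all } v\in X.
\]
This is proved by contradiction: a sequence violating it is bounded in $X$, hence has a $B$-convergent subsequence by compactness, whose limit has vanishing $Y$-norm but unit $B$-norm, which is absurd. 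I will apply this both pointwise in $t$ and after integration in $t$.

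For part $(1)$ I would fix a bounded subset $F$ of $E_{p,r}\cap L^q((0,T),B)$ and verify the two hypotheses of the Simon criterion in $L^s((0,T),B)$. The first, relative compactness of the time-averages $\{\int_{t_1}^{t_2} f\,dt:f\in F\}$ in $B$, is immediate: by H\"older (using $p\geq 1$) these averages are bounded in $X$, so the compactness of $X\hookrightarrow B$ makes them relatively compact in $B$. The second is the uniform decay of the $B$-modulus of continuity in time, $\sup_{f\in F}\norm{f(\cdot+h)-f}_{L^s((0,T-h),B)}\to 0$ as $h\to 0$. Here I would interpolate in the time exponent: the derivative bound gives $\norm{f(\cdot+h)-f}_{L^1((0,T-h),Y)}\leq h\,\norm{f'}_{L^1((0,T),Y)}\leq Ch$, while Ehrling applied in $L^1$ and the uniform $L^1((0,T),X)$-bound yield $\norm{f(\cdot+h)-f}_{L^1((0,T-h),B)}\leq \eta C+C_\eta C h$, whose $\limsup_{h\to0}$ is $O(\eta)$ and hence $0$ by arbitrariness of $\eta$. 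Finally the Lebesgue interpolation $\norm{g}_{L^s(B)}\leq\norm{g}_{L^1(B)}^{\theta}\norm{g}_{L^q(B)}^{1-\theta}$, with $\tfrac1s=\theta+\tfrac{1-\theta}{q}$ (the constraint $s<q$ forcing $\theta\in(0,1]$) and the uniform $L^q((0,T),B)$-bound on $g=f(\cdot+h)-f$, upgrade this to the required $L^s$ statement.

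For part $(2)$, where $p=\infty$ and $r>1$, I would instead check the Arzel\`a--Ascoli criterion in $C([0,T],B)$. Since $f\in L^\infty((0,T),X)\subset L^r((0,T),Y)$ with $f'\in L^r((0,T),Y)$, one has $f\in W^{1,r}((0,T),Y)$, and the one-dimensional Sobolev embedding $W^{1,r}\hookrightarrow C^{0,1-1/r}$ (this is exactly where $r>1$ is needed) gives the uniform H\"older estimate $\norm{f(t+h)-f(t)}_Y\leq \norm{f'}_{L^r(Y)}\,h^{1-1/r}$. Writing $f(t)$ as the $Y$-limit of its averages $\tfrac1h\int_t^{t+h}f$, which are bounded in $X$, shows $f(t)\in X$ with $\norm{f(t)}_X\leq\norm{f}_{L^\infty(X)}$, so $\{f(t):f\in F,\ t\in[0,T]\}$ is bounded in $X$ and therefore relatively compact in $B$; this yields the pointwise compactness. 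Uniform equicontinuity in $B$ then follows from Ehrling, $\norm{f(t+h)-f(t)}_B\leq \eta\,\norm{f(t+h)-f(t)}_X+C_\eta\,\norm{f(t+h)-f(t)}_Y\leq 2\eta\,\norm{f}_{L^\infty(X)}+C_\eta C\,h^{1-1/r}$, whose $\limsup_{h\to0}$ is again $O(\eta)$; Arzel\`a--Ascoli concludes.

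The genuine technical heart, and the step I expect to be the main obstacle, is the vector-valued Kolmogorov--Riesz--Fr\'echet characterization of relative compactness in $L^s((0,T),B)$ invoked in part $(1)$: its proof requires a time-mollification argument showing that equicontinuity of translations together with compactness of the time-averages forces relative compactness, and it is precisely here that care with Banach-space-valued integration is needed. Everything else is interpolation bookkeeping. I would therefore either invoke this criterion and the $B$-valued Arzel\`a--Ascoli criterion directly, as in \cite{Sim, BF}, or prove it by approximating $f$ with its averages $f_h(t)=\tfrac1h\int_t^{t+h}f$ and combining the uniform smallness of $\norm{f-f_h}$ with the compactness in $B$ of the range of each $f_h$. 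A secondary subtlety, handled above by the averaging argument, is ensuring in part $(2)$ that the pointwise values $f(t)$ genuinely lie in $X$ with uniformly controlled norm.
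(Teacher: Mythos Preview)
The paper does not prove this lemma at all; it simply quotes the Aubin--Lions--Simon result from \cite{BF,Sim} as a preliminary tool, so there is no ``paper's own proof'' to compare against. Your outline is the standard argument underlying those references (Ehrling's inequality plus Simon's translation criterion for part~(1), and Ehrling plus Arzel\`a--Ascoli with the $W^{1,r}\hookrightarrow C^{0,1-1/r}$ embedding for part~(2)), and it is correct in its essentials.
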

\begin{lem}[Theorem II.5.14 in \cite{BF}]\label{C^0-em}
Let $k\in \mathbb{N}$, then the space
\[E_{2,2}=\{f\in L^{2}((0,T),H^{k+2}(\Om)),\frac{\p f}{\p t}\in L^{2}((0,T), H^k(\Om))\}\]
is continuously embedded in $C^0([0,T], H^{k+1}(\Om))$.
\end{lem}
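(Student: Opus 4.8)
The statement is classical and is attributed here to \cite{BF}; the plan is to recognize $H^{k+1}(\Om)$ as the ``midpoint'' interpolation space between $H^{k+2}(\Om)$ and $H^{k}(\Om)$ and then to invoke the abstract interpolation-in-time embedding of Lions--Magenes type. First I would set $X_0=H^{k+2}(\Om)$ and $X_1=H^{k}(\Om)$, which on a smooth bounded domain are Hilbert spaces with $X_0\hookrightarrow X_1$ continuous and dense. The key structural fact is that
\[[X_0,X_1]_{1/2}=H^{k+1}(\Om)\]
with equivalent norms; on a smooth bounded domain this follows from the corresponding statement on $\Real^m$ (where it is immediate from the Fourier characterisation of $H^s$) by means of a bounded linear extension operator $E:H^s(\Om)\to H^s(\Real^m)$ and a bounded restriction, since these intertwine the interpolation functor.

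With this identification in hand, the claim reduces to the abstract fact that for Hilbert spaces $X_0\hookrightarrow X_1$ (continuously and densely) one has the continuous embedding
\[\{f\in L^2((0,T),X_0):\ \tfrac{df}{dt}\in L^2((0,T),X_1)\}\ \hookrightarrow\ C^0([0,T],[X_0,X_1]_{1/2}).\]
To prove this I would introduce the canonical positive self-adjoint operator $\Ld$ on $X_1$ associated with the pair $(X_0,X_1)$, normalised so that $\|v\|_{[X_0,X_1]_{1/2}}^2=\langle\Ld v,v\rangle_{X_1}$ while $\|\Ld v\|_{X_1}\sim\|v\|_{X_0}$. For $f$ smooth in time one computes, for $s\le t$,
\[\|f(t)\|_{[X_0,X_1]_{1/2}}^2-\|f(s)\|_{[X_0,X_1]_{1/2}}^2=2\int_s^t\langle\Ld f(\tau),\tfrac{df}{d\tau}(\tau)\rangle_{X_1}\,d\tau\le 2\int_s^t\|f(\tau)\|_{X_0}\,\|\tfrac{df}{d\tau}(\tau)\|_{X_1}\,d\tau,\]
which by Cauchy--Schwarz in $\tau$ is finite and absolutely continuous in $(s,t)$, with total variation controlled by $\|f\|_{L^2((0,T),X_0)}\|\frac{df}{dt}\|_{L^2((0,T),X_1)}$. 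Hence $t\mapsto\|f(t)\|_{[X_0,X_1]_{1/2}}^2$ is bounded and uniformly continuous on $[0,T]$, and together with the weak continuity of $f$ into $[X_0,X_1]_{1/2}$ this upgrades to strong continuity. A standard mollification-in-time argument then removes the extra smoothness assumption and yields the continuous embedding, hence the lemma.

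The main obstacle is the second step: the identification $H^{k+1}(\Om)=[H^{k+2}(\Om),H^{k}(\Om)]_{1/2}$ together with the construction of the operator $\Ld$ that legitimises the duality pairing in the energy identity above. The subtlety is that $H^k(\Om)$ carries no boundary condition, so one cannot simply use the functional calculus of the Neumann Laplacian across the whole scale --- that only relates $H^{k+2}$-with-Neumann-condition to $H^{k}$, cf.\ Lemma \ref{eq-norm}; instead one must either work within the Lions--Magenes interpolation framework on $\Om$ directly, or transfer everything to $\Real^m$ by extension and restriction, where $\Ld=(I-\De)^{1/2}$ furnishes the required operator. Once this point is settled, all remaining steps are routine.
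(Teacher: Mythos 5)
The paper does not prove this lemma at all --- it is quoted verbatim from the cited reference \cite{BF} (Theorem II.5.14), whose proof is precisely the interpolation argument you describe: identify $H^{k+1}(\Om)=[H^{k+2}(\Om),H^{k}(\Om)]_{1/2}$ via extension/restriction to $\Real^m$, and run the abstract Lions--Magenes energy identity with the canonical operator $\Ld$, upgrading weak to strong continuity through the Hilbert-space norm. Your reconstruction is correct, correctly flags the one real subtlety (that the Neumann Laplacian's functional calculus cannot be used across the boundary-condition-free scale $H^k(\Om)$), and is essentially the same proof as the source, so there is nothing to add.
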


To end this section, we briefly introduce the notations of Galerkin basis and Galerkin projection. Let $\Om$ be a  smooth bounded  domain in $\Real^m$, $\ld_{i}$ be the $i^{th}$ eigenvalue of the operator $\De-I$  with Neumann boundary condition, whose corresponding eigenfunction is $f_{i}$. That is,
$$(\De-I)f_{i}=-\ld_{i}f_{i}\,\,\,\,\quad\text{with}\quad\,\,\,\,\frac{\p f_{i}}{\p \nu}|_{\p\Om}=0.$$
	
Without loss of generality, we assume $\{f_{i}\}_{i=1}^{\infty}$ are completely standard orthonormal basis of $L^{2}(\Om,\Real^{1})$. Let $H_{n}=span\{f_{1},\dots f_{n}\}$ be a finite subspace of $L^{2}$, $P_{n}:L^{2}(\Om, \Real^1)\to H_{n}$ be the canonical projection. In fact, for any $f\in L^{2}$, we define
\[f^{n}=P_{n}f=\sum_{1}^{n}\<f,f_{i}\>_{L^{2}}f_{i},\]
then,
\[\lim_{n\to \infty}\norm{f-f_{n}}_{L^{2}}=0.\]
	
	
\section{Parabolic perturbation to the Schr\"{o}dinger flow}\label{s1}
In this section, we consider the parabolic perturbation to the Schr\"{o}dinger flow \eqref{S-eq}:
\begin{equation}\label{PS-eq}
\begin{cases}
\p_tu_\ep =\ep(\De u_\ep+|\n u_\ep|^2 u_\ep)+u_\ep\times\De u_\ep,\quad\quad&\text{(x,t)}\in\Om\times \Real^+,\\[1ex]
\frac{\p u_\ep}{\p \nu}=0, &\text{(x,t)}\in\p\Om\times \Real^+,\\[1ex]
u_\ep(x,0)=u_0: \Om\to \U^2,\quad \frac{\p u_0}{\p \nu}|_{\p \Om}=0.
\end{cases}
\end{equation}
Here $\ep\in (0,1)$ is the perturbation constant. The local in time regular solution to equation \eqref{PS-eq} is established in \cite{CJ,CF,C-W} by virtue of Galerkin approximation method in the following theorem, the proof of which can also be found in \cite{C-W}.
\begin{thm}\label{uin-ex-PS}
Suppose that $u_0\in H^3(\Om)$. Then there exists a maximal existence time $T_\ep$ depending on $\norm{u_0}_{H^2}$ such that equation \eqref{PS-eq} admits a unique regular solution $u_\ep$ on $\Om\times [0,T_\ep)$ which satisfies that for any $T<T_\ep$
\begin{itemize}
\item[$(1)$] $|u_\ep(x,t)|=1$ for all $(x,t)\in [0,T]\times\Om$;
\item[$(2)$] $u_\ep\in L^\infty([0,T],H^3(\Om))\cap L^{2}([0,T],H^4(\Om))$;
\item[$(3)$] $\frac{\p u_\ep}{\p t}\in L^\infty([0,T],H^1(\Om))\cap L^{2}([0,T],H^2(\Om))$ and $\frac{\p^2 u_\ep}{\p t^2}\in L^{2}([0,T],L^2(\Om))$.
\end{itemize}
Moreover, there exists a constant $C(T)>0$ such that
\begin{align}\label{es-PS}
\sup_{t\leq T}(\norm{u_\ep}^2_{H^3(\Om)}+\norm{\frac{\p u_\ep}{\p t}}^2_{H^1(\Om)})+\int_{0}^{T}(\norm{u_\ep}^2_{H^4(\Om)}+\norm{\frac{\p u_\ep}{\p t}}^2_{H^2(\Om)}+\norm{\frac{\p^2 u_\ep}{\p t^2}}^2_{L^2(\Om)})dt\leq C(T).
\end{align}
\end{thm}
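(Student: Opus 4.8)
The plan is to prove Theorem~\ref{uin-ex-PS} in two stages: first, local existence of a regular solution on a short time interval $[0, T_\ep)$ via Galerkin approximation; second, derivation of the a priori estimate \eqref{es-PS} which, combined with a standard continuation argument, identifies $T_\ep$ as the maximal time of existence. For the Galerkin scheme I would project \eqref{PS-eq} onto the finite-dimensional spaces $H_n = \mathrm{span}\{f_1,\dots,f_n\}$ introduced at the end of Section~\ref{s: pre}, seeking $u_\ep^n(t) = \sum_{i=1}^n c_i^n(t) f_i$ solving the projected ODE system with initial data $P_n u_0$. Since the nonlinearities $|\n u_\ep|^2 u_\ep$ and $u_\ep \times \De u_\ep$ are smooth (polynomial after one spatial derivative) in the finite-dimensional setting, the ODE system has a unique local-in-time $C^1$ solution by Picard--Lindel\"of. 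The key structural point is that the Neumann eigenfunctions $f_i$ make the projection compatible with the boundary condition, so $\frac{\p u_\ep^n}{\p\nu}|_{\p\Om} = 0$ holds automatically.

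Next I would establish the uniform-in-$n$ energy estimates that let us pass to the limit. The first is the $|u_\ep| = 1$ constraint: testing the equation against $u_\ep$ and using $\langle u_\ep \times \De u_\ep, u_\ep\rangle = 0$ together with $\langle \De u_\ep + |\n u_\ep|^2 u_\ep, u_\ep\rangle = -|\n u_\ep|^2(1 - |u_\ep|^2)$ (roughly — at the Galerkin level this requires care since $|u_\ep^n|\ne 1$ exactly, so one instead controls $\||u_\ep^n|^2 - 1\|$ via a Gr\"onwall argument and recovers $|u_\ep| = 1$ only in the limit, or one works with the penalized functional). For the $H^3$ bound, I would differentiate the equation, test with $\De^2 u_\ep$ (equivalently, estimate $\frac{d}{dt}\|\De u_\ep\|_{H^1}^2$ using Lemma~\ref{eq-norm} to identify $\|\De u_\ep\|_{H^1}$ with $\|u_\ep\|_{H^3}$ up to lower order). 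The crucial algebraic cancellation is that the Schr\"odinger term $u_\ep \times \De u_\ep$, being skew, contributes nothing to the highest-order energy identity: $\langle \De^2(u_\ep \times \De u_\ep), \De^2 u_\ep\rangle$ has its top-order piece $\langle u_\ep \times \De^3 u_\ep, \De^2 u_\ep\rangle$ whose principal part vanishes by skew-symmetry, leaving only commutator terms that are controlled by $\|u_\ep\|_{H^3}$. The dissipative term $\ep(\De u_\ep + |\n u_\ep|^2 u_\ep)$ produces the good term $-\ep\|u_\ep\|_{H^4}^2$ plus manageable remainders. One then closes a differential inequality of the form $\frac{d}{dt}\Phi(t) + \ep\|u_\ep\|_{H^4}^2 \leq C\,\Pi(\Phi(t))$ with $\Phi = \|u_\ep\|_{H^3}^2 + \|\p_t u_\ep\|_{H^1}^2$ and $\Pi$ polynomial, and Lemma~\ref{Gron-inq} yields a time $T_\ep = T_\ep(\|u_0\|_{H^2})$ — actually $\|u_0\|_{H^3}$ enters through $\Phi(0)$, but a bootstrap shows dependence only on $\|u_0\|_{H^2}$ for the existence time while the bound itself involves $\|u_0\|_{H^3}$ — on which the bounds hold uniformly in $n$.

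Having the uniform bounds, I would extract $\frac{\p u_\ep^n}{\p t}$ bounds from the equation itself (the right side is bounded in $L^\infty_t H^1_x \cap L^2_t H^2_x$ once $u_\ep \in L^\infty_t H^3 \cap L^2_t H^4$), differentiate in $t$ to get $\p_t^2 u_\ep \in L^2_t L^2_x$, then apply the Aubin--Lions--Simon lemma (Lemma~\ref{A-S}) to pass to a limit $u_\ep$ strongly in, say, $C^0([0,T], H^2)$, which suffices to take limits in all the quadratic nonlinearities; the constraint $|u_\ep| = 1$ and the Neumann condition survive the limit, and Lemma~\ref{C^0-em} upgrades regularity of the limit as needed. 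Uniqueness of the regular solution follows by a standard energy difference estimate: if $u, \tilde u$ are two solutions, $w = u - \tilde u$ satisfies a linear equation with coefficients controlled by the $H^3$ norms, and testing with $w$ (using $\ep > 0$ for the parabolic smoothing, or just Gr\"onwall in $L^2$) forces $w \equiv 0$. Finally, the maximal-time statement is the standard open--closed argument: the set of times up to which the solution exists with the stated regularity is nonempty and open (by re-solving from a later time), and if $T_\ep < \infty$ with $\limsup_{t\to T_\ep}\|u_\ep(t)\|_{H^3} < \infty$ one could continue, contradiction. The main obstacle I anticipate is the careful bookkeeping in the $H^3$ estimate — ensuring every commutator term produced by $[\De^2, u_\ep\times]$ and by the $\ep$-terms is genuinely lower order and absorbable, and handling the boundary terms from integration by parts (here the Neumann condition $\frac{\p u_\ep}{\p\nu} = 0$ and its differential consequences are essential, and some boundary integrals involving the second fundamental form of $\p\Om$ will appear and must be controlled by interpolation).
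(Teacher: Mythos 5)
Your overall route --- Galerkin approximation in the Neumann eigenbasis of $\De-I$, energy estimates, Aubin--Lions compactness, uniqueness by an energy difference argument, and continuation to a maximal time --- is exactly the paper's strategy; the paper in fact delegates the Galerkin construction and the core $\ep$-dependent estimates to \cite{CJ,C-W} and devotes its written proof almost entirely to the continuation step. Two concrete points in your sketch need repair, and both are precisely where the boundary matters. First, the higher-order boundary integrals are not ``controlled by interpolation'': a term such as $\int_{\p\Om}\<\De\p_t u_\ep,\frac{\p \De u_\ep}{\p\nu}\>ds$ has neither sign nor smallness, and I do not see how interpolation alone closes the estimate. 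The point of the eigenbasis is that $\De f_i=(1-\ld_i)f_i$ with $\frac{\p f_i}{\p\nu}|_{\p\Om}=0$, so $\frac{\p}{\p\nu}\De u^n_\ep|_{\p\Om}=0$ and these boundary terms vanish identically at the Galerkin level; for the limit solution the analogous vanishing is supplied by the compatibility conditions $\frac{\p}{\p\nu}\frac{\p u_\ep}{\p t}|_{\p\Om}=0$ and $\frac{\p\tau(u_\ep)}{\p\nu}|_{\p\Om}=0$ proved in Lemma \ref{comp-bdy}. Your estimate should be reorganized so that it never needs to bound such a term.

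Second, your maximal-time argument (``re-solving from a later time'') elides the one step the paper actually has to work for: to restart the local theory at $T_1$ one must verify that the endpoint datum $u_\ep(\cdot,T_1)$, obtained as an $H^3$-limit via Lemma \ref{C^0-em}, still satisfies the Neumann condition $\frac{\p u_\ep(\cdot,T_1)}{\p\nu}|_{\p\Om}=0$; otherwise the hypotheses of the local existence result are not met and the open--closed argument does not close. The paper does this by passing to the limit $t\to T_1$ in the weak identity $\int_{\Om}\<\De u_\ep,f\>+\<\n u_\ep,\n f\>\,dx=0$, using continuity of $t\mapsto u_\ep(\cdot,t)$ in $H^3$ up to $T_1$. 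The remaining ingredients of your proposal --- recovering $|u_\ep|\equiv 1$ only for the limit solution, using the $\ep$-dissipation to absorb the top-order commutators (so that $T_\ep$ may depend on $\ep$), and the $H^2$-dependence of the existence time with persistence of $H^3$ regularity --- are consistent with what is done in \cite{CJ,C-W} and in Section \ref{s1}.
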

\begin{proof}
Let $u^n_\ep$ be the solution to the Galerkin approximation equation to \eqref{PS-eq}:
\begin{equation}\label{G-PS-eq1}
\begin{cases}
\p_tu^n_\ep =\ep\De u_\ep+\ep P_n(|\n u^n_\ep|^2 u^n_\ep)+P_n(u^n_\ep\times\De u^n_\ep),\quad\quad&\text{(x,t)}\in\Om\times \Real^+,\\[1ex]
u^n_\ep(x,0)=\sum_{i=1}^n\int_{\Om}\<u_0,f_i\>dxf_i(x),\quad &\text{x}\in\Om.
\end{cases}
\end{equation}
		
By establishing some delicate energy estimates and taking a process of convergence for $u^n_\ep$ as $n\to \infty$, we can obtain a solution $u_1$ to equation \eqref{PS-eq} on $\Om\times[0,T_1)$ for some $T_1>0$ depending only on $\ep$ and $\norm{u_0}_{H^2(\Om)}$, which satisfies estimate (\ref{es-PS}) for all $T<T_1$ (cf. \cite{CJ,C-W}).
		
Without loss of generality, we assume that $T_1$ is not the maximal existence time, then the solution $u_1$ satisfies
\[\sup_{t< T_1}(\norm{u_1}^2_{H^3(\Om)}+\norm{\frac{\p u_1}{\p t}}^2_{H^1(\Om)})+\int_{0}^{T_1}(\norm{u_1}^2_{H^4(\Om)}+\norm{\frac{\p u_1}{\p t}}^2_{H^2(\Om)}+\norm{\frac{\p^2 u_1}{\p t^2}}^2_{L^2(\Om)})dt<\infty.\]
It implies
\begin{align}\label{C^0-es1}
u_1\in C^0([0,T_1], H^3(\Om)),
\end{align}
\begin{align}\label{C^0-es2}
\frac{\p u_1}{\p t}\in C^0([0,T_1], H^1(\Om)),
\end{align}
by applying Lemma \ref{C^0-em}. Then, there exist maps $u_{T_1}(x)\in H^3(\Om)$  and $v_{T_1}(x)\in H^1(\Om)$ such that
\[\lim_{T\to T_1}\norm{u_1(\cdot,t)-u_{T_1}}_{H^3(\Om)}=0,\]
and
\[\lim_{T\to T_1}\norm{\p_tu_1(\cdot,t)-v_{T_1}}_{H^1(\Om)}=0.\]
		
Next, we need to show that $u_{T_1}(x)$ satisfies the Neumann boundary condition, i.e.,
\[\frac{\p u_{T_1}}{\p\nu}|_{\p \Om}=0.\]
Since $\frac{\p u_1(x,t)}{\p\nu}|_{\p \Om\times[0,T_1)}=0$, it follows that
\[\int_{0}^{T_1}\int_{\Om}\<\De u_1,\phi\>dxdt+\int_{0}^{T_1}\int_{\Om}\<\n u_1,\n \phi\>dxdt=0,\]
for all $\phi\in C^\infty(\bar{\Om}\times [0,T_1))$. In particular, if we choose $\phi(x,t)=f(x)\eta(t)$ for all $f\in C^\infty(\bar{\Om})$ and $\eta\in C^\infty[0,T_1)$, and denote $g(t)=\int_{\Om}\<\De u_1,f\>+\<\n u_1,\n f\>dx$, then $g(t)\in C^0[0,T_1]$ and there holds
\[\int_{0}^{T_1}g(t)\eta(t)dt=0,\]
which follows $g(t)\equiv0$, that is
\[\int_{\Om}\<\De u_1,f\>+\<\n u_1,\n f\>dx\equiv0.\]
And hence we get
\[\frac{\p u_{T_1}}{\p\nu}|_{\p \Om}=0\]
as $t\to T_1$. Here we have used the fact (\ref{C^0-es1}).
		
Therefore, by taking the same argument as that in above, we conclude that there exists a regular solution $w_1$ to equation \eqref{PS-eq} on $\Om\times [T_1,T_2)$ by replacing $u_0$ with $u_{T_1}$. Thus, it is not difficult to show the map
\begin{equation*}
u_2(x,t)=
\begin{cases}
u_1(x,t) &(x,t)\in \Om\times[0,T_1),\\[1ex]
w_1(x,t) &(x,t)\in \Om\times[T_1,T_2),
\end{cases}
\end{equation*}
is a solution to equation \eqref{PS-eq} on $\Om\times[0,T_2)$ satisfying Estimates (\ref{es-PS}) on $\Om\times [0, T_2)$, since we have the compactness (\ref{C^0-es2}). By repeating this process, we can get a maximal solution $u_\ep$ on $\Om\times [0,T_\ep)$, which satisfies Estimate (\ref{es-PS}).
		
Finally, as in \cite{CJ,C-W} we can finish the proof of uniqueness by considering the equation of the difference of two solutions and applying a direct energy method.
\end{proof}

\begin{rem}\label{es-u_ep}
\begin{itemize}
\item[$(1)$] Since $T_\ep$ is the maximal existence time, if $T_\ep<\infty$, then we have
\begin{align*}
&\sup_{t<T_\ep}(\norm{u_\ep}^2_{H^3(\Om)}+\norm{\frac{\p u_\ep}{\p t}}^2_{H^1(\Om)})\\
&+\lim_{T\to T_\ep}\{\int_{0}^{T}\int_{\Om}(\norm{u_\ep}^2_{H^4(\Om)}+\norm{\frac{\p u_\ep}{\p t}}^2_{H^2(\Om)}+\norm{\frac{\p^2 u_\ep}{\p t^2}}^2_{L^2(\Om)})dt\}=\infty.
\end{align*}

\item[$(2)$] Let $\{T_i\}$ be the existence times of Galerkin approximation solution constructed in the above Theorem \ref{uin-ex-PS}. If we set
\[S:=\{T_1, T_2,\dots, T_i, \dots\},\]
then for all $[T,T^\prime]\subset [0,T_\ep)\setminus S$, the estimates in \cite{CJ, C-W}also imply the Galerkin approximation solution $u^n_\ep$ satisfies
\[\sup_{T\leq t\leq T^\prime}(\norm{u^n_\ep}^2_{H^2(\Om)}+\norm{\frac{\p u^n_\ep}{\p t}}^2_{H^1(\Om)})+\int_{0}^{T}\int_{\Om}(\norm{u^n_\ep}^2_{H^3(\Om)}+\norm{\frac{\p u^n_\ep}{\p t}}^2_{H^2(\Om)})dt\leq C(T,T^\prime).\]
\end{itemize}
\end{rem}	

Next, we show the uniform estimates of the solution $u_\ep$, which is independent of $\ep$, and hence obtain a regular solution to the Schr\"{o}dinger flow \eqref{S-eq} by taking limit of the sequence of approximation solutions $\{u_\ep\}$ as $\ep\to 0$.
	
First of all, by choosing $u_\ep$ and $-\De u_\ep$ as test functions for equation \eqref{PS-eq}, we can show the uniform $H^1$-estimates as follows.
\begin{equation}\label{H^1}
\frac{\p}{\p t}\int_{\Om}(|u_\ep|^2+|\n u_\ep|^2)dx+2\ep\int_{\Om}|u_\ep\times\De u_\ep|^2dx=0.
\end{equation}

\subsection{Uniform $H^2$-estimates}\label{H^2-es}
However, to show directly the further uniform $H^2$-estimate on $u_\ep$ by usual energy estimates seems difficult, because of the spin term of
\[u_\ep\times\De u_\ep.\]
To proceed, we need to show the wave-type formula to parabolic perturbation of the Schr\"{o}dinger flow in the below lemma, which is mentioned in Section \ref{intr}.
\begin{lem}\label{p-wave-str}
Let $u_\ep$ be the regular solution to equation \eqref{PS-eq} obtained in the above. Then the following properties hold true
\begin{itemize}
\item[$(1)$] For a.e. $(x,t)\in \Om\times[0,T_\ep)$, we have
\begin{equation}\label{w-eq1}
\frac{\p^2 u_\ep}{\p t^2}=\ep(\De \frac{\p u_\ep}{\p t}+\frac{\p}{\p t}(|\n u_\ep|^2u_\ep))+\frac{\p u_\ep}{\p t}\times \De u_\ep+u_\ep \times \De \frac{\p u_\ep}{\p t};
\end{equation}
\item[$(2)$] For a.e. $(x,t)\in\Om\times[0,T_\ep)$, we have
\begin{equation}\label{w-eq2}
\begin{aligned}
&\frac{\p^2 u_\ep}{\p t^2}+(1-\ep^2)\De(\tau(u_\ep))-2\ep\De(u_\ep\times \De u_\ep)\\
=&\ep\{\frac{\p}{\p t}(|\n u_\ep|^2u_\ep)-2\textnormal{\mbox{div}}(\n u_\ep\dot{\times} \n^2 u_\ep)+u_\ep\times \De(|\n u_\ep|^2u_\ep)+|\n u_\ep|^2u_{\ep}\times \De u_\ep\}\\
&+\De(|\n u_\ep|^2 u_\ep)-2\textnormal{\mbox{div}}^2((\n u_\ep\dot{\otimes}\n u_\ep))u_\ep-2\<\n|\n u_\ep|^2, \n u_\ep\>\\
&-2\<\De u_\ep, \n u_\ep\>\cdot \n u_\ep-|\n u_\ep|^2\De u_\ep,
\end{aligned}
\end{equation}
where
\[\textnormal{\mbox{div}}(\n u_\ep\dot{\times} \n^2 u_\ep):=\sum_{i,j=1}^3\p_i(\p_ju_\ep\times\p_{ij}u_\ep),\]
\[\textnormal{\mbox{div}}^2((\n u_\ep\dot{\otimes}\n u_\ep))u_\ep:=\sum_{i,j=1}^3\p_{ij}((\p_iu_\ep\cdot\p_{j}u_\ep))u_\ep.\]
\end{itemize}
\end{lem}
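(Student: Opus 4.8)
The plan is to derive both identities by direct differentiation of the evolution equation \eqref{PS-eq}, using only elementary vector calculus in $\Real^3$ together with the constraint $|u_\ep|=1$. For part (1), I would simply differentiate \eqref{PS-eq} in $t$: since $\ep\De u_\ep$ is linear, $\p_t(\ep\De u_\ep)=\ep\De\p_tu_\ep$; the term $\ep|\n u_\ep|^2u_\ep$ differentiates to $\ep\p_t(|\n u_\ep|^2u_\ep)$ (which I keep uncontracted, since expanding it produces terms involving $\p_t\n u_\ep$ that are most naturally grouped later); and the spin term obeys the Leibniz rule for the cross product, $\p_t(u_\ep\times\De u_\ep)=\p_tu_\ep\times\De u_\ep+u_\ep\times\De\p_tu_\ep$, again using $\p_t\De=\De\p_t$. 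Collecting these gives \eqref{w-eq1}. To make this rigorous at the level of regularity claimed, I would invoke Theorem \ref{uin-ex-PS}: $u_\ep\in L^2([0,T],H^4)$ and $\p_tu_\ep\in L^2([0,T],H^2)$, $\p_t^2u_\ep\in L^2([0,T],L^2)$, which is exactly enough for all the terms in \eqref{w-eq1} to make sense in $L^2(\Om)$ for a.e.\ $t$, and the differentiations above are justified in the sense of distributions in $(x,t)$ and then upgraded pointwise a.e.

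For part (2), the idea is to eliminate the time derivatives $\p_tu_\ep$ and $\De\p_tu_\ep$ appearing in \eqref{w-eq1} by substituting \eqref{PS-eq} back in, thereby converting the first-order-in-time identity into a genuine wave-type (second-order-in-time, fourth-order-in-space) equation. Concretely, I substitute $\p_tu_\ep=\ep(\De u_\ep+|\n u_\ep|^2u_\ep)+u_\ep\times\De u_\ep$ into the three $\p_tu_\ep$-dependent terms of \eqref{w-eq1}: into $\ep\De\p_tu_\ep$, into $\p_tu_\ep\times\De u_\ep$, and into $u_\ep\times\De\p_tu_\ep$. This produces, among others, the term $u_\ep\times\De(u_\ep\times\De u_\ep)$ coming from $u_\ep\times\De\p_tu_\ep$, and here is where the geometry of $\U^2$ enters: using $|u_\ep|=1$ and the pointwise identities $a\times(a\times b)=\langle a,b\rangle a-|a|^2b$ together with the product rule for $\De$ acting on a cross product, this double cross product collapses. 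The curvature identity $R^{\U^2}(X,Y)Z=\langle Y,Z\rangle X-\langle X,Z\rangle Z$ quoted in the introduction is precisely the intrinsic shadow of this computation. The terms $-2\,\mathrm{div}(\n u_\ep\dot\times\n^2u_\ep)$ and $-2\,\mathrm{div}^2((\n u_\ep\dot\otimes\n u_\ep))u_\ep$ arise from commuting $\De$ past the cross/tensor products and reorganizing the resulting second-derivative expressions into divergence form; the remaining lower-order terms $-2\langle\n|\n u_\ep|^2,\n u_\ep\rangle$, $-2\langle\De u_\ep,\n u_\ep\rangle\cdot\n u_\ep$, $-|\n u_\ep|^2\De u_\ep$ collect the pieces of $\De(|\n u_\ep|^2u_\ep)$ and of $a\times(a\times b)$-type contractions that are not already written out. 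The coefficients $(1-\ep^2)$ and $-2\ep$ in front of $\De(\tau(u_\ep))$ and $\De(u_\ep\times\De u_\ep)$ emerge by matching the $\ep$-linear, $\ep^2$, and $\ep^0$ contributions: the $\ep^2$ comes from $\ep\De(\ep\De u_\ep)=\ep^2\De^2u_\ep$ inside $\ep\De\p_tu_\ep$, and $\tau(u_\ep)=\De u_\ep+|\n u_\ep|^2u_\ep$ since $A^{\U^2}(u)(\n u,\n u)=|\n u|^2u$ for the unit sphere.

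The bookkeeping is the real obstacle: this is a purely computational lemma, but the computation is long and every term must be tracked through the substitution, the $\De$-Leibniz expansions, and the algebraic collapse of the double cross products, taking care that $\dot\times$ and $\dot\otimes$ denote the componentwise operations defined in the statement (e.g.\ $\mathrm{div}(\n u_\ep\dot\times\n^2u_\ep)=\sum_{i,j}\p_i(\p_ju_\ep\times\p_{ij}u_\ep)$). I would organize the write-up by first expanding $\De\p_tu_\ep$ via \eqref{PS-eq}, then handling $\p_tu_\ep\times\De u_\ep$ and $u_\ep\times\De\p_tu_\ep$ separately, using the identity $u\times(u\times w)=\langle u,w\rangle u-w$ (valid since $|u_\ep|=1$) wherever a double product appears, and finally absorbing the anisotropy-type term $\ep\p_t(|\n u_\ep|^2u_\ep)$ together with $\ep\De(|\n u_\ep|^2u_\ep)$, $|\n u_\ep|^2u_\ep\times\De u_\ep$, and the uncontracted $\De(|\n u_\ep|^2u_\ep)$ into the right-hand side exactly as displayed. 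No deeper input than Theorem \ref{uin-ex-PS} (for regularity) and the explicit form of the second fundamental form and curvature of $\U^2$ is needed; the content is entirely in carrying out the substitution cleanly.
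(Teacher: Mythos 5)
Your proposal matches the paper's proof in both structure and substance: part (1) is obtained by differentiating \eqref{PS-eq} in time with the Leibniz rule for the cross product, and part (2) by substituting \eqref{PS-eq} back into \eqref{w-eq1} and collapsing the resulting double cross products via the Lagrange triple-product identities together with $|u_\ep|=1$ and $\langle u_\ep,\De u_\ep\rangle=-|\n u_\ep|^2$, then reorganizing the second-derivative expressions into the divergence forms $\mathrm{div}(\n u_\ep\dot\times\n^2u_\ep)$ and $\mathrm{div}^2(\n u_\ep\dot\otimes\n u_\ep)$ — exactly the route of the paper's equations \eqref{w-inq0}--\eqref{w-inq4}. (One minor slip in an aside: the curvature identity should read $R^{\U^2}(X,Y)Z=\langle Y,Z\rangle X-\langle X,Z\rangle Y$, not $-\langle X,Z\rangle Z$; this plays no role in your actual computation.)
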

\begin{proof}
The formula in $(1)$ is given directly by equation \eqref{PS-eq}. We need only to show the equation in $(2)$. A simple calculation gives
\begin{equation}\label{w-inq0}
\begin{aligned}
&\frac{\p^2 u_\ep}{\p t^2}-\ep^2\De(\De u_\ep+|\n u_\ep|^2u_\ep)\\
=&\ep\{\frac{\p}{\p t}(|\n u_\ep|^2 u_\ep)+\De(u_\ep \times \De u_\ep)+u_\ep\times\De^2 u_\ep+u_\ep\times\De(|\n u_\ep|^2u_\ep)+|\n u_\ep|^2u_\ep\times \De u_\ep\}\\
&+\{(u_\ep\times \De u_\ep)\times \De u_\ep+u_\ep\times(u_\ep \times \De^2 u_\ep)+2u_\ep\times(\n u_\ep\dot{\times}\n \De u_\ep)\}\\
=&\ep I+II
\end{aligned}
\end{equation}
For the term $u_\ep\times\De^2 u_\ep$ in part $I$, there holds
\[u_\ep\times\De^2 u_\ep=\De (u_\ep\times \De u_\ep)-2\mbox{div}(\n u_\ep \dot{\times}\n^2 u_\ep).\]
		
Next, we turn to presenting the calculation of $II$. By applying the Lagrangian formula
\[a\times(b\times c)=\<a,c\>b-\<a,b\>c\]
and
\[(a\times b)\times c=\<a,c\>b-\<b,c\>a\]
for any vectors $a,b,c$ in $\Real^3$, we have
\begin{equation}\label{w-inq1}
\begin{aligned}
(u_\ep\times \De u_\ep)\times \De u_\ep=&\<u_\ep,\De u_\ep\>\De u_\ep-|\De u_\ep|^2u_\ep\\
=&-|\n u_\ep|^2\De u_\ep-|\De u_\ep|^2u_\ep,
\end{aligned}
\end{equation}
\begin{equation}\label{w-inq2}
\begin{aligned}
u_\ep\times (u_\ep\times \De^2 u_\ep)=&\<\De^2 u_\ep, u_\ep\>u_\ep-\De^2 u_\ep\\
=&-(|\De u_\ep|^2 +2|\n^2 u_\ep|^2+4\<\n \De u_\ep,\n u_\ep\>)u_\ep-\De^2 u_\ep,
\end{aligned}
\end{equation}
and
\begin{equation}\label{w-inq3}
\begin{aligned}
u_\ep\times(\n u_\ep\dot{\times}\n \De u_\ep)=&\sum_{i=1}^3u_\ep\times(\p_i u_\ep\times\p_i \De u_\ep)\\
=&\<\p_i\De u_\ep, u_\ep\>\p_i u_\ep-\<\p_i u_\ep, u_\ep\>\p_i \De u_\ep\\
=&\<\n\De u_\ep, u_\ep\>\n u_\ep\\
=& -\<\n |\n u_\ep|^2,\n u_\ep\>-\<\De u_\ep, \n u_\ep\>\cdot \n u_\ep.
\end{aligned}
\end{equation}
Here, we have used the fact $|u_\ep|\equiv 1$.

Moreover, we have
\begin{equation}\label{w-inq4}
\<\De^2 u_\ep,u_\ep\>-|\De u_\ep|^2=-2(|\De u_\ep|^2+|\n^2 u_\ep|^2+2\<\n \De u_\ep,\n u_\ep\>)=-2\mbox{div}^2(\n u_\ep \dot{\otimes}\n u_\ep).
\end{equation}
By combining the above equations \eqref{w-inq1}-\eqref{w-inq4} with \eqref{w-inq0}, we get the desire formula in $(2)$.
\end{proof}
\begin{rem}
Let $\ep=0$, we have
\[\frac{\p^2 u}{\p t^2}+\De^2 u
=-2\textnormal{\mbox{div}}^2((\n u\dot{\otimes}\n u))u-2\<\n|\n u|^2, \n u\>-2\<\De u, \n u\>\cdot \n u-|\n u|^2\De u.\]
And hence, a tedious but direct calculation again gives the wave-type structure of the Schr\"{o}dinger flow in Section \ref{intr}:
\begin{equation}\label{w-inq5}
\begin{aligned}
\frac{\p^2 u}{\p t^2}+\De^2u	
=&-2\textnormal{\mbox{div}}^2((\n u\dot{\otimes}\n u)u)\\
&+2\textnormal{\mbox{div}}((\n u\dot{\otimes}\n u)\cdot\n u)-\textnormal{\mbox{div}}(|\n u|^2\n u).
\end{aligned}
\end{equation}
Here
\[\textnormal{\mbox{div}}^2((\n u\dot{\otimes}\n u)u):=\sum_{i,j=1}^3\p_{ij}((\p_iu\cdot\p_{j}u)u),\]
and
\[\textnormal{\mbox{div}}((\n u\dot{\otimes}\n u)\cdot\n u):=\sum_{i,j=1}^3\p_i((\p_iu\cdot\p_{j}u)\p_j u).\]
\end{rem}
	
We also need the following compatibility conditions to equation \eqref{w-eq1}(or \eqref{w-eq2})  on the boundary and an equivalent $H^3$-norm of the solution $u_\ep$ to equation \eqref{PS-eq}, which is related to the $H^1$-norm of $\frac{\p u_\ep}{\p t}$.
\begin{lem}\label{comp-bdy}
The solution $u_\ep$ satisfies the following compatibility conditions on the boundary:
\begin{equation}
\begin{cases}
\frac{\p }{\p \nu}\frac{\p u_\ep}{\p t}|_{\p\Om\times [0,T_\ep)}=0,\\[1ex]
\frac{\p \tau(u_\ep)}{\p \nu}|_{\p\Om\times [0,T_\ep)}=0,
\end{cases}
\end{equation}
in the sense of trace.
\end{lem}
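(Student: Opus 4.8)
The plan is to derive the two boundary compatibility conditions separately, working from the Neumann condition $\frac{\p u_\ep}{\p\nu}|_{\p\Om\times[0,T_\ep)}=0$ satisfied by the regular solution obtained in Theorem \ref{uin-ex-PS}. For the first identity, I would argue that, since $u_\ep$ is regular enough (in particular $\p_t u_\ep\in C^0([0,T_\ep),H^1(\Om))$ with enough spatial regularity on almost every time slice, by the estimates in Theorem \ref{uin-ex-PS} and Lemma \ref{C^0-em}), the relation $\frac{\p u_\ep}{\p\nu}=0$ holds identically on $\p\Om\times[0,T_\ep)$ in the trace sense, so one may differentiate it in $t$. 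Commuting $\p_t$ with the (time-independent) conormal derivative $\frac{\p}{\p\nu}=\<\nu,\n\cdot\>$ gives $\frac{\p}{\p\nu}\frac{\p u_\ep}{\p t}\big|_{\p\Om\times[0,T_\ep)}=0$. To make this rigorous at the level of traces I would test the weak formulation against $\phi(x,t)=f(x)\eta'(t)$ with $f\in C^\infty(\bar\Om)$, $\eta\in C_c^\infty(0,T_\ep)$, integrate by parts in $t$, and use the already-established identity $\int_\Om(\<\De u_\ep,f\>+\<\n u_\ep,\n f\>)\,dx\equiv 0$ (this is exactly the computation carried out in the proof of Theorem \ref{uin-ex-PS}); differentiating that identity in $t$ yields $\int_\Om(\<\De\p_t u_\ep,f\>+\<\n\p_t u_\ep,\n f\>)\,dx\equiv 0$, which is precisely the weak Neumann condition for $\p_t u_\ep$.

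For the second identity, $\frac{\p\tau(u_\ep)}{\p\nu}\big|_{\p\Om\times[0,T_\ep)}=0$, I would use equation \eqref{PS-eq} to express $\tau(u_\ep)=\De u_\ep+|\n u_\ep|^2 u_\ep$ in terms of the time derivative and the spin term. Indeed, from \eqref{PS-eq},
\[
\ep\,\tau(u_\ep)=\p_t u_\ep-u_\ep\times\De u_\ep,
\]
and since $|u_\ep|\equiv 1$ one has $u_\ep\times\De u_\ep=u_\ep\times\tau(u_\ep)=J(u_\ep)\tau(u_\ep)$. Hence $\ep\,\tau(u_\ep)+J(u_\ep)\tau(u_\ep)=\p_t u_\ep$, i.e. $(\ep+J(u_\ep))\tau(u_\ep)=\p_t u_\ep$; the operator $\ep+J(u_\ep)$ acting on $T_{u_\ep}\U^2$ is invertible (its square is $(\ep^2-1)$ times a rotation, more precisely $(\ep+J)(\ep-J)=\ep^2+1$ on the tangent plane since $J^2=-\mathrm{id}$ there and $P(u_\ep)\tau(u_\ep)=\tau(u_\ep)$ because $\<u_\ep,\tau(u_\ep)\>=-|\n u_\ep|^2+|\n u_\ep|^2=0$). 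Wait — one must be slightly careful: $J(u_\ep)=u_\ep\times$ annihilates the normal direction, so I would first note $\tau(u_\ep)$ is tangent and then write $\tau(u_\ep)=\frac{1}{\ep^2+1}(\ep-J(u_\ep))\p_t u_\ep$. Now take the conormal derivative: on $\p\Om$ both $\frac{\p}{\p\nu}\p_t u_\ep=0$ (just proved) and $\frac{\p u_\ep}{\p\nu}=0$, and the coefficient $u_\ep$ in the $J(u_\ep)$ term is smooth in $x$, so differentiating the product and restricting to $\p\Om$ kills every term; this gives $\frac{\p\tau(u_\ep)}{\p\nu}\big|_{\p\Om}=0$. (The factor $\frac{1}{\ep^2+1}$ is constant and harmless.)

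The main obstacle is the justification that these identities hold in the \emph{trace} sense rather than merely as $L^2_tL^2_x$ relations — i.e. that $u_\ep$ has enough regularity up to the boundary for the conormal derivatives of $\p_t u_\ep$ and of $\tau(u_\ep)$ to be well defined. This is where I would lean on Theorem \ref{uin-ex-PS}: the bounds $u_\ep\in L^2([0,T],H^4(\Om))$, $\p_t u_\ep\in L^2([0,T],H^2(\Om))$ place $\p_t u_\ep$ in $H^1$ on almost every time slice, so $\frac{\p}{\p\nu}\p_t u_\ep$ has an $H^{-1/2}(\p\Om)$ trace for a.e. $t$; similarly $\tau(u_\ep)=\De u_\ep+|\n u_\ep|^2u_\ep\in L^2([0,T],H^2(\Om))$ by the product/Sobolev estimates in Section \ref{s: pre}, so $\frac{\p\tau(u_\ep)}{\p\nu}$ likewise has a trace for a.e. $t$. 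Once the traces are well defined, the weak-formulation argument of the previous paragraphs (testing against separated functions $f(x)\eta(t)$ and using density) upgrades the a.e.-pointwise identities to the trace statements claimed. A secondary technical point is continuity of the relevant quantities in $t$ up to $t=0$, which again follows from Lemma \ref{C^0-em} applied with $k=1$ (giving $\p_t u_\ep\in C^0([0,T],H^1(\Om))$) and $k=2$; I would invoke that to conclude the compatibility conditions propagate to the closed interval.
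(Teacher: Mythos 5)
Your proof is correct, and while it shares the paper's overall architecture --- first establish the Neumann condition for $\p_t u_\ep$, then deduce the one for $\tau(u_\ep)$ from the equation --- both steps are carried out by genuinely different means. For the first identity the paper returns to the Galerkin approximants $u^n_\ep$, whose time derivatives satisfy the Neumann condition exactly (being finite combinations of Neumann eigenfunctions), and passes to the limit in the resulting weak identity; you instead differentiate in time the weak Neumann identity $\int_\Om\<\De u_\ep,f\>+\<\n u_\ep,\n f\>\,dx\equiv0$ already known for $u_\ep$ itself, which is legitimate precisely because $\p_t u_\ep\in L^2_{loc}((0,T_\ep),H^2(\Om))$, and which spares you the return trip to the approximation scheme. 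For the second identity the paper substitutes $\p_tu_\ep=\ep\tau(u_\ep)+u_\ep\times\tau(u_\ep)$ into that weak identity, integrates by parts, isolates the boundary term $\int_{\p\Om}\<\ep\frac{\p\tau(u_\ep)}{\p\nu}+u_\ep\times\frac{\p\tau(u_\ep)}{\p\nu},\phi\>\,ds$, and concludes from the orthogonality of $\frac{\p\tau(u_\ep)}{\p\nu}$ and $u_\ep\times\frac{\p\tau(u_\ep)}{\p\nu}$; you instead invert $\ep+J(u_\ep)$ on the tangent plane to obtain $\tau(u_\ep)=\frac{1}{1+\ep^2}(\ep\p_tu_\ep-u_\ep\times\p_tu_\ep)$ --- which is exactly part $(1)$ of Lemma \ref{equi-norm}, so nothing new needs to be proved there --- and then differentiate this identity in the conormal direction, every term vanishing on $\p\Om$ by the first identity together with $\frac{\p u_\ep}{\p\nu}|_{\p\Om}=0$. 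Your route to the second identity is arguably tidier (no integration-by-parts bookkeeping and no final orthogonality step), at the modest cost of having to justify the product rule for the normal traces of $u_\ep\times\n\p_tu_\ep$ and $\n u_\ep\times\p_tu_\ep$, which is harmless since $u_\ep(\cdot,t)\in H^3(\Om)\hookrightarrow C^1(\bar\Om)$; both versions consume the same regularity input from Theorem \ref{uin-ex-PS}, namely $u_\ep\in L^2_{loc}H^4$ and $\p_tu_\ep\in L^2_{loc}H^2$, which you correctly identify as the point where the trace statements become meaningful.
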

\begin{proof}
By using the results in Theorem \ref{uin-ex-PS} and the estimates in Remark \ref{es-u_ep}, without loss of generality, we assume the Galerkin approximation solution $u^n_{\ep}$ satisfies
		
\[\frac{\p u^{n}_\ep}{\p t} \to \frac{\p u_\ep}{\p t}\quad \text{strongly in}\quad L^2([T,T^\prime],H^1(\Om)),\]
and
\[\frac{\p u^{n}_\ep}{\p t} \rightharpoonup \frac{\p u_\ep}{\p t}\quad \text{weakly in}\quad L^2([T,T^\prime],H^2(\Om)),\]
for $[T,T^\prime]\subset[0,T_\ep)\setminus S$.
		
Let $\phi$ be a given function in $C^\infty(\bar{\Om}\times[0,T_\ep))$. Since $$\frac{\p}{\p \nu}\frac{\p u^n_{\ep}}{\p t}|_{\p \Om\times[0,T_\ep)}=0,$$ it follows that there holds
\[\int_T^{T^{\prime}}\int_{\Om}\<\De \frac{\p u^n_\ep}{\p t}, \phi\>dxdt+\int_T^{T^{\prime}}\int_{\Om}\<\n \frac{\p u^n_\ep}{\p t},\n \phi\>dxdt=0.\]
Letting $n\to \infty$, we can see immediately that
\[\int_T^{T^{\prime}}\int_{\Om}\<\De \frac{\p u_\ep}{\p t}, \phi\>dxdt+\int_T^{T^{\prime}}\int_{\Om}\<\n \frac{\p u_\ep}{\p t},\n \phi\>dxdt=0.\]
Since $\frac{\p u_\ep}{\p t} \in L^2_{loc}([0,T_\ep), H^2(\Om))$, it follows for any $0<T<T_\ep$, there holds
\begin{equation}\label{eq}
\int_0^{T}\int_{\Om}\<\De \frac{\p u_\ep}{\p t}, \phi\>dxdt=-\int_0^{T}\int_{\Om}\<\n \frac{\p u_\ep}{\p t},\n \phi\>dxdt,
\end{equation}
that is $$\frac{\p }{\p \nu}\frac{\p u_\ep}{\p t}|_{\p\Om\times [0,T_\ep)}=0.$$
		
In consideration of the fact that $\n \tau(u_\ep)$ is orthogonal to $u_\ep \times \n \tau(u_\ep)$, we can see that the equation
\[\frac{\p u_\ep}{\p t}=\ep\tau(u_\ep)+u_\ep\times \tau(u_\ep)\]
implies $\frac{\p \tau(u_\ep)}{\p \nu}|_{\p\Om\times [0,T_\ep)}=0$ in the sense of trace, since $\frac{\p u_\ep}{\p \nu}|_{\p \Om \times [0,T_\ep)}=0$.
\medskip

Now we provide the details of the proof for $\frac{\p \tau(u_\ep)}{\p \nu}|_{\p\Om\times [0,T_\ep)}=0$. By the equation
	\[\frac{\p u_\ep}{\p t}=\ep\tau(u_\ep)+u_\ep\times \tau(u_\ep)\]
	we can show that
	\begin{align*}
	\text{LHS of \eqref{eq}}=&\ep\int_0^{T}\int_{\Om}\<\De \tau(u_\ep), \phi\>dxdt+\int_0^{T}\int_{\Om}\<\De (u_\ep\times \tau(u_\ep)), \phi\>dxdt\\
	=&\ep\int_0^{T}\int_{\Om}\<\De \tau(u_\ep), \phi\>dxdt\\
	&+\int_0^{T}\int_{\Om}\<\De u_\ep\times \tau(u_\ep)+2\n u_\ep\times \n \tau(u_\ep)+u_\ep\times \De \tau(u_\ep), \phi\>dxdt
	\end{align*}
	and
	\begin{align*}
	\text{RHS of \eqref{eq}}=&-\ep \int_0^{T}\int_{\Om}\<\n \tau(u_\ep),\n \phi\>dxdt-\int_0^{T}\int_{\Om}\<\n u_\ep \times \tau(u_\ep)+u_\ep \times \n\tau(u_\ep),\n \phi\>dxdt\\
	=&+\ep \int_0^{T}\int_{\Om}\<\De \tau(u_\ep),\phi\>dxdt-\ep \int_0^{T}\int_{\p \Om}\<\frac{\p \tau(u_\ep)}{\p \nu},\phi\>dxdt\\
	&-\int_0^{T}\int_{\Om}\<\n u_\ep ,\n(\tau(u_\ep)\times \phi)\>dxdt+\int_0^{T}\int_{\Om}\<\n u_\ep ,\n \tau(u_\ep)\times \phi \>dxdt\\
	&+\int_0^{T}\int_{\Om}\<\mbox{div}(u_\ep\times \n \tau(u_\ep)) ,\phi)\>dxdt-\int_0^{T}\int_{\p\Om}\<u_\ep\times
	\frac{\p\tau(u_\ep)}{\p \nu} ,\phi)\>dxdt\\
	=&\ep \int_0^{T}\int_{\Om}\<\De \tau(u_\ep),\phi\>dxdt-\int_0^{T}\int_{\p \Om}\<\ep \frac{\p \tau(u_\ep)}{\p \nu}+u_\ep\times
	\frac{\p\tau(u_\ep)}{\p \nu},\phi\>dxdt\\
	&+\int_0^{T}\int_{\Om}\<\De u_\ep\times \tau(u_\ep)+2\n u_\ep\times \n \tau(u_\ep)+u_\ep\times \De \tau(u_\ep), \phi\>dxdt.\\
	\end{align*}
Then, the fact ``$LHS=RHS$'' leads to
	\[\int_0^{T}\int_{\p \Om}\<\ep \frac{\p \tau(u_\ep)}{\p \nu}+u_\ep\times
	\frac{\p\tau(u_\ep)}{\p \nu},\phi\>dxdt=0\]
in the sense of trace.
\end{proof}

\begin{lem}\label{equi-norm}
The solution $u_\ep$ has the following properties:
\begin{itemize}
\item[$(1)$] $\De u_\ep=\frac{1}{1+\ep^2}(\ep \frac{\p u_\ep}{\p t}-u_\ep\times  \frac{\p u_\ep}{\p t})-|\n u_\ep|^2u_\ep$ for a.e. $(x,t)\in \Om\times[0,T_\ep)$;
\item[$(2)$] There exists a constant $C$ such that there holds
\[\int_{\Om}|\n \De u_\ep|^2dx\leq C(1+\frac{1}{1+\ep^2})(1+\norm{u_\ep}^2_{H^2}+\norm{\frac{\p u_\ep}{\p t}}^2_{H^1})^3,\]
and hence we have
\[\norm{u_\ep}^2_{H^3}\leq C(1+\norm{u_\ep}^2_{H^2}+\norm{\frac{\p u_\ep}{\p t}}^2_{H^1})^3.\]
\end{itemize}
\end{lem}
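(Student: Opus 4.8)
The plan is to prove (1) by algebraically inverting the evolution equation \eqref{PS-eq}, and then to prove (2) by differentiating the identity in (1), estimating each resulting term via Sobolev embeddings on the three-dimensional domain $\Om$, and finally invoking the equivalent-norm Lemma \ref{eq-norm} together with the Neumann condition $\frac{\p u_\ep}{\p\nu}|_{\p\Om}=0$.

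For part (1): starting from $\p_t u_\ep = \ep(\De u_\ep + |\n u_\ep|^2 u_\ep) + u_\ep\times\De u_\ep$, I would write $\tau(u_\ep) = \De u_\ep + |\n u_\ep|^2 u_\ep$ so that $\p_t u_\ep = \ep\tau(u_\ep) + u_\ep\times\tau(u_\ep)$. Since $|u_\ep|\equiv 1$, the vector $\tau(u_\ep)$ is tangent to $\U^2$ at $u_\ep$ (indeed $\<\tau(u_\ep),u_\ep\>=\<\De u_\ep,u_\ep\>+|\n u_\ep|^2 = -|\n u_\ep|^2+|\n u_\ep|^2=0$), and on the tangent plane $T_{u_\ep}\U^2$ the operator $X\mapsto \ep X + u_\ep\times X$ is invertible with inverse $Y\mapsto \frac{1}{1+\ep^2}(\ep Y - u_\ep\times Y)$; one checks this by applying $u_\ep\times$ twice and using $u_\ep\times(u_\ep\times Y)=-Y$ for $Y\perp u_\ep$. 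Applying this inverse to $Y=\p_t u_\ep$ (which is also tangent, since $\p_t|u_\ep|^2=0$) yields $\tau(u_\ep)=\frac{1}{1+\ep^2}(\ep\,\p_t u_\ep - u_\ep\times\p_t u_\ep)$, and rearranging gives $\De u_\ep = \frac{1}{1+\ep^2}(\ep\,\p_t u_\ep - u_\ep\times\p_t u_\ep) - |\n u_\ep|^2 u_\ep$, which is the claimed formula.

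For part (2): I would take $\n$ of the identity in (1). Then $\n\De u_\ep$ is a sum of terms of the schematic shapes $\frac{1}{1+\ep^2}\n\p_t u_\ep$, $\frac{1}{1+\ep^2}(\n u_\ep\times\p_t u_\ep)$, $\frac{1}{1+\ep^2}(u_\ep\times\n\p_t u_\ep)$, $|\n u_\ep|^2\n u_\ep$, and $\<\n u_\ep,\n^2 u_\ep\>u_\ep$. Each is estimated in $L^2(\Om)$: the first and third by $C\|\p_t u_\ep\|_{H^1}$ (using $|u_\ep|=1$); for the second I use $\|\n u_\ep\cdot\p_t u_\ep\|_{L^2}\le\|\n u_\ep\|_{L^6}\|\p_t u_\ep\|_{L^3}\le C\|u_\ep\|_{H^2}\|\p_t u_\ep\|_{H^1}$ by the 3D Sobolev embeddings $H^1\hookrightarrow L^6$ and $H^1\hookrightarrow L^3$; for $|\n u_\ep|^2\n u_\ep$ I bound $\|\n u_\ep\|_{L^6}^3\le C\|u_\ep\|_{H^2}^3$; and for $\<\n u_\ep,\n^2 u_\ep\>u_\ep$ I use $\|\n u_\ep\|_{L^6}\|\n^2 u_\ep\|_{L^3}\le C\|u_\ep\|_{H^2}\|u_\ep\|_{H^2}$ (since $H^1\hookrightarrow L^3$ applied to $\n^2 u_\ep$ costs one more derivative, landing in $\|u_\ep\|_{H^3}$—see the remark below). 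Collecting terms and using Young's inequality to absorb the cross terms gives $\|\n\De u_\ep\|_{L^2}^2\le C\bigl(1+\tfrac{1}{1+\ep^2}\bigr)\bigl(1+\|u_\ep\|_{H^2}^2+\|\p_t u_\ep\|_{H^1}^2\bigr)^3$. Finally, since $\frac{\p u_\ep}{\p\nu}|_{\p\Om}=0$, Lemma \ref{eq-norm} with $k=1$ gives $\|u_\ep\|_{H^3}\le C(\|u_\ep\|_{L^2}+\|\De u_\ep\|_{H^1})$, and $\|\De u_\ep\|_{H^1}^2$ is controlled by $\|\De u_\ep\|_{L^2}^2+\|\n\De u_\ep\|_{L^2}^2$; the $L^2$ part of $\De u_\ep$ is handled exactly as above (in fact more easily), which yields the stated $H^3$ bound after absorbing $\tfrac{1}{1+\ep^2}\le 1$.

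The main obstacle is the term $\<\n u_\ep,\n^2 u_\ep\>u_\ep$ (and likewise $\n(|\n u_\ep|^2 u_\ep)$): naively it involves $\n^2 u_\ep$ in $L^3$, which by Sobolev embedding requires an $H^3$-type bound and so appears to be of the same order as the quantity we are trying to estimate. The resolution is that $\n^2 u_\ep$ only appears paired with the lower-order factor $\n u_\ep\in L^6$, so that $\|\n u_\ep\cdot\n^2 u_\ep\|_{L^2}\le\|\n u_\ep\|_{L^6}\|\n^2 u_\ep\|_{L^3}$; here $\|\n^2 u_\ep\|_{L^3}$ is controlled by $\|u_\ep\|_{H^2}^{1/2}\|u_\ep\|_{H^3}^{1/2}$ via the Gagliardo–Nirenberg interpolation inequality in dimension three, so the term contributes at most $C\|u_\ep\|_{H^2}^{3/2}\|u_\ep\|_{H^3}^{1/2}$ to $\|\n\De u_\ep\|_{L^2}$, hence at most $\tfrac14\|u_\ep\|_{H^3}^2 + C\|u_\ep\|_{H^2}^6$ to $\|\n\De u_\ep\|_{L^2}^2$ after Young's inequality—and this $H^3$ piece is then absorbed into the left-hand side through the equivalent-norm estimate. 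This closes the argument and produces the cubic dependence on $1+\|u_\ep\|_{H^2}^2+\|\p_t u_\ep\|_{H^1}^2$.
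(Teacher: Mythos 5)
Your proposal is correct and follows essentially the same route as the paper: part (1) by inverting $X\mapsto \ep X+u_\ep\times X$ on $T_{u_\ep}\U^2$ (the paper just calls this a direct calculation), and part (2) by differentiating the identity, estimating each term with 3D Sobolev embeddings, handling the critical term $\<\n^2 u_\ep,\n u_\ep\>u_\ep$ via the interpolation $\norm{\n^2 u_\ep}_{L^3}\leq\norm{\n^2 u_\ep}_{L^2}^{1/2}\norm{\n^2 u_\ep}_{L^6}^{1/2}$ plus Young's inequality and absorption, and closing with Lemma \ref{eq-norm}. The only cosmetic differences are your use of $L^6$--$L^3$ in place of the paper's $L^4$--$L^4$ H\"older splitting for the term $\n u_\ep\times\p_t u_\ep$, and your more explicit justification of the inversion in part (1).
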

\begin{proof}
Since $u_\ep$ satisfies the perturbation equation \eqref{PS-eq}:
\[\frac{\p u_\ep}{\p t}=\ep \tau(u)+u_\ep\times \tau(u_\ep),\]
a direct calculation shows
\[\tau(u_\ep)=\frac{1}{1+\ep^2}(\ep \frac{\p u_\ep}{\p t}-u_\ep\times\frac{\p u_\ep}{\p t}).\]
Thus, the proof of $(1)$ is finished. And hence, there holds
\begin{align*}
\n \De u_\ep=&\frac{1}{1+\ep^2}(\ep \n \frac{\p u_\ep}{\p t}-\n u_\ep\times  \frac{\p u_\ep}{\p t}-u_\ep\times \n \frac{\p u_\ep}{\p t})\\
&-|\n u_\ep|^2\n  u_\ep-2\<\n^2 u_\ep, \n u_\ep\>u_\ep.
\end{align*}
Immediately it follows
\begin{align*}
\int_{\Om}|\n \De u_\ep|^2dx\leq& \frac{C}{(1+\ep^2)^2}\int_{\Om}(1+\ep^2)|\n \frac{\p u_\ep}{\p t}|^2+|\n u_\ep|^2|\frac{\p u_\ep}{\p t}|^2dx\\
&+C\int_{\Om}|\n u_\ep|^6+|\n^2 u_\ep|^2|\n u_\ep|^2dx\\
=&\frac{C}{1+\ep^2}(\norm{\n\frac{\p u_\ep}{\p t}}^2_{L^2}+I)+C(\norm{u_\ep}^6_{H^2}+II).
\end{align*}
Here,
\begin{align*}
I=&\int_{\Om}|\n u_\ep|^2|\frac{\p u_\ep}{\p t}|^2dx\\
\leq &C\norm{\n u_\ep}^2_{L^4}\norm{\frac{\p u_\ep}{\p t}}^2_{L^4}\leq C (\norm{u_\ep}^4_{H^2}+\norm{\frac{\p u_\ep}{\p t}}^4_{H^1}),
\end{align*}
and
\begin{align*}
II=&\int_{\Om}|\n^2 u_\ep|^2|\n u_\ep|^2dx\\
\leq &\norm{\n^2 u_\ep}^2_{L^3}\norm{\n u_\ep}^2_{L^6}\leq \norm{\n ^2u_\ep}_{L^2}\norm{\n^2 u_\ep}_{L^6}\norm{\n u_\ep}^2_{L^6}\\
\leq &C\norm{u_\ep}^3_{H^2}(\norm{u_\ep}_{H^2}+\norm{\n \De u_\ep}_{L^2})\\
\leq &C(\de)(\norm{u_\ep}^4_{H^2}+\norm{u_\ep}^6_{H^2})+\de\norm{\n\De u_\ep}^2_{L^2}.
\end{align*}
To get the above estimate of  $II$,  we have used the interpolation inequality
\[\norm{\n^2 u_\ep}_{L^3}\leq \norm{\n ^2u_\ep}^\frac{1}{2}_{L^2}\norm{\n^2 u_\ep}^\frac{1}{2}_{L^6}\]
and Lemma \ref{eq-norm}. Thus, in view of the estimates of $I$ and $II$, the estimate in $(2)$ has been established.
\end{proof}

Now, we are in the position to show the uniform $H^2$-estimates of solution $u_\ep$. By choosing $\frac{\p u_\ep}{\p t}$ as a test function to equation \eqref{w-eq2}, we obtain
\begin{equation*}
\begin{aligned}
&\int_{\Om}\<\frac{\p^2 u_\ep}{\p t^2},\frac{\p u_\ep}{\p t}\>dx+(1-\ep^2)\int_{\Om}\<\De (\De u_\ep+|\n u_\ep|^2u_\ep),\frac{\p u_\ep}{\p t}\>dx\\
=&\ep\left\{\int_{\Om}\<2\De(u_\ep\times \De u_\ep)+\frac{\p}{\p t}(|\n u_\ep|^2u_\ep)-2\textnormal{\mbox{div}}(\n u_\ep\dot{\times} \n^2 u_\ep)+u_\ep\times \De(|\n u_\ep|^2u_\ep),\frac{\p u_\ep}{\p t}\>dx\right\}\\
&+\ep\left\{\int_{\Om}\<|\n u_\ep|^2u_{\ep}\times \De u_\ep, \frac{\p u_\ep}{\p t}\> dx\right\}\\
&+\int_{\Om}\<\De(|\n u_\ep|^2 u_\ep)-2\textnormal{\mbox{div}}^2((\n u_\ep\dot{\otimes}\n u_\ep))u_\ep-2\<\De u_\ep, \n u_\ep\>\cdot \n u_\ep-2\<\n |\n u_\ep|^2,\n u_\ep\>,\frac{\p u_\ep}{\p t}\>dx\\
&-\int_{\Om}\<|\n u_\ep|^2\De u_\ep,\frac{\p u_\ep}{\p t}\>dx.
\end{aligned}
\end{equation*}
For the sake of convenience, we rewrite the above identity as the following
\begin{equation}\label{H^1-inq}
\begin{aligned}
&\int_{\Om}\<\frac{\p^2 u_\ep}{\p t^2},\frac{\p u_\ep}{\p t}\>dx+(1-\ep^2)\int_{\Om}\<\De (\De u_\ep+|\n u_\ep|^2u_\ep),\frac{\p u_\ep}{\p t}\>dx\\
=&\ep(I_1+I_2+I_3+I_4+I_5)+(II_1+II_2+II_3+II_4+II_5).
\end{aligned}
\end{equation}
First of all, we estimate the left hand side of the above identity:
\[LHS=\frac{1}{2}\frac{\p}{\p t}\int_{\Om}|\frac{\p u_\ep}{\p t}|^2dx+\frac{1-\ep^2}{2}\frac{\p}{\p t}\int_{\Om}|\De u_\ep|^2dx-(1-\ep^2)\int_{\Om}\<\n (|\n u_\ep|^2u_\ep), \n \frac{\p u_\ep}{\p t}\>dx.\]
Here we have used the compatibility boundary condition in Lemma \ref{comp-bdy}. For the last term in the above identity we have the following estimate:
\begin{equation}\label{H^1-inq1}
\begin{aligned}
(1-\ep^2)\left|\int_{\Om}\<\n(|\n u_\ep|^2u_\ep), \n\frac{\p u_\ep}{\p t}\>dx\right|\leq & C\int_{\Om}|\n u_\ep|^6+|\n u_\ep|^2|\n^2 u_\ep|^2+|\n\frac{\p u_\ep}{\p t}|^2dx\\
\leq&C(\norm{u_\ep}^6_{H^2}+\norm{u_\ep}^2_{H^3}\norm{u_\ep}^2_{H^2}+\norm{\n\frac{\p u_\ep}{\p t}}^2_{L^2})\\
\leq &C(1+\norm{u_\ep}^2_{H^2}+\norm{\frac{\p u_\ep}{\p t}}^2_{H^1})^4,
\end{aligned}
\end{equation}
where we have used the estimates in Lemma \ref{equi-norm}.
	
Next, we estimate the nine terms on the right hand side step by steps.
\begin{equation}\label{H^1-inq2}
\begin{aligned}
I_1=&2\int_{\Om}\<\De (u_\ep\times \De u_\ep ),\frac{\p u_\ep}{\p t}\>dx=-2\int_{\Om}\<\De (\ep(\De u_\ep +|\n u_\ep|^2u_\ep)-\frac{\p u_\ep}{\p t}),\frac{\p u_\ep}{\p t}\>dx\\
=&-2\ep \int_{\Om}\<\De u_\ep +|\n u_\ep|^2u_\ep,\De\frac{\p u_\ep}{\p t}\>dx-2 \int_{\Om}\<\n \frac{\p u_\ep}{\p t},\n \frac{\p u_\ep}{\p t}\>dx\\
=&-\ep\frac{\p}{\p t}\int_{\Om}|\De u_\ep|^2dx+2\ep\int_{\Om}\< \n(|\n u_\ep|^2u_\ep),\n\frac{\p u_\ep}{\p t}\>dx-2\norm{\n\frac{\p u_\ep}{\p t}}^2_{L^2}\\
\leq&-\ep\frac{\p}{\p t}\int_{\Om}|\De u_\ep|^2dx+C\ep(\norm{u_\ep}^2_{H^2}+\norm{\frac{\p u_\ep}{\p t}}^2_{H^1})^4-(2-\ep)\norm{\n\frac{\p u_\ep}{\p t}}^2_{L^2}.
\end{aligned}
\end{equation}
Here, we have used the compatibility conditions in Line 2 and Line 3 of the above inequality \eqref{H^1-inq2}, and applied Lemma \ref{equi-norm} in the last line.
	
\begin{equation}\label{H^1-inq3}
\begin{aligned}
|I_2|=&\left|\int_{\Om}\<\frac{\p}{\p t}(|\n u_\ep|^2u_\ep),\frac{\p u_\ep}{\p t}\>dx\right|=\int_{\Om}|\n u_\ep|^2|\frac{\p u_\ep}{\p t}|^2dx\\
\leq &\norm{u_\ep}^2_{H^2}\norm{\frac{\p u_\ep}{\p t}}^2_{H^1}\leq C(\norm{u_\ep}^2_{H^2}+\norm{\frac{\p u_\ep}{\p t}}^2_{H^1})^2.
\end{aligned}
\end{equation}
Here we have used the fact $\<u_\ep,\frac{\p u_\ep}{\p t}\>=0$.
	
\begin{equation}\label{H^1-inq4}
\begin{aligned}
|I_3|=&\left|\int_{\Om}\<\mbox{div}(\n u_\ep\dot{\times} \n^2 u_\ep),\frac{\p u_\ep}{\p t}\>dx\right|= \left|\int_{\Om}\sum_{j=1}^3\<\p_j u_\ep\times \p_j\De u_\ep),\frac{\p u_\ep}{\p t}\>dx\right|\\
\leq &C \int_{\Om}|\n u_\ep||\n\De u_\ep||\frac{\p u_\ep}{\p t}|dx\leq C\norm{\n u_\ep}_{L^6}\norm{\n\De u_\ep}_{L^2}\norm{\frac{\p u_\ep}{\p t}}_{L^3}\\
\leq &C(\norm{\n\De u_\ep}^2_{L^2}+\norm{u_\ep}^2_{H^2}\norm{\frac{\p u_\ep}{\p t}}^2_{H^1})\leq C(1+\norm{u_\ep}^2_{H^2}+\norm{\frac{\p u_\ep}{\p t}}^2_{H^1})^3.
\end{aligned}
\end{equation}
	
\begin{equation}\label{H^1-inq5}
\begin{aligned}
|I_4|=&|\int_{\Om}\<u_\ep\times \De(|\n u_\ep|^2u_\ep),\frac{\p u_\ep}{\p t}\>dx|\leq C\int_{\Om}|\n^2 u_\ep||\n u_\ep|^2|\frac{\p u_\ep}{\p t}|dx\\
\leq &C\norm{\n^2 u_{\ep}}_{L^2}\norm{\n u_\ep}^2_{L^6}\norm{\frac{\p u_\ep}{\p t}}_{L^6}\leq C(1+\norm{u_\ep}^6_{H^2}+\norm{\frac{\p u_\ep}{\p t}}^2_{H^1}).
\end{aligned}
\end{equation}
	
For the term $$|I_5|=|\int_{\Om}\<|\n u_\ep|^2u_\ep\times \De u_\ep,\frac{\p u_\ep}{\p t}\>dx|$$ and the term $$|II_1|=|\int_{\Om}\<\De(|\n u_\ep|^2u_\ep),\frac{\p u_\ep}{\p t}\>dx|,$$
we can derive the same estimates as that of term $I_4$.

On the other hand, a simple calculation leads to
\begin{equation}\label{H^1-inq6}
\begin{aligned}
|II_2+II_3+II_4+II_5|\leq&2\left|\int_{\Om}\<\mbox{div}^2((\n u_\ep\dot{\otimes}\n u_\ep))u_\ep,\frac{\p u_\ep}{\p t}\>dx\right|\\
&+2\left|\int_{\Om}\<\<\De u_\ep, \n u_\ep\>\cdot \n u_\ep,\frac{\p u_\ep}{\p t}\>dx\right|\\
&+2\left|\int_{\Om}\<(\n|\n u_\ep|^2\cdot\n u_\ep),\frac{\p u_\ep}{\p t}\>dx\right|\\
&+\left|\int_{\Om}\<|\n u_\ep|^2\De u_\ep,\frac{\p u_\ep}{\p t}\>dx\right|\\
\leq &C\int_{\Om}|\n u_\ep|^2|\n^2 u_\ep||\frac{\p u_\ep}{\p t}|dx\\
\leq & C(1+\norm{u_\ep}^6_{H^2}+\norm{\frac{\p u_\ep}{\p t}}^2_{H^1}).
\end{aligned}
\end{equation}
Therefore, by combining inequalities \eqref{H^1-inq1}-\eqref{H^1-inq6} with equation \eqref{H^1-inq}, we get the uniform $H^2$-estimates of $u_\ep$ as follows:
\begin{equation}\label{H^2}
\begin{aligned}
\frac{1}{2}\frac{\p}{\p t}\int_{\Om}|\frac{\p u_\ep}{\p t}|^2dx+\frac{1+\ep^2}{2}\frac{\p}{\p t}\int_{\Om}|\De u_\ep|^2dx\leq C(1+\ep)(1+\norm{u_\ep}^2_{H^2}+\norm{\frac{\p u_\ep}{\p t}}^2_{H^1})^4,
\end{aligned}
\end{equation}
where the constant $C$ is independent of $\ep$ ($0<\ep<1$) and $u_\ep$.

\subsection{Uniform $H^3$-estimates}
In this subsection, we show the uniform $H^3$-estimates of solution $u_\ep$. By a similar argument with that in the above subsection, we choose $-\De \frac{\p u_\ep}{\p t}$ as a test function of equation \eqref{w-eq1}(or \eqref{w-eq2}). However, it seems that it is not easy to get energy estimates directly, since the regularity of $\frac{\p^2 u_\ep}{\p t^2}\in L^2(\Om\times[0,T])$ and $\De^2u_\ep\in L^2(\Om\times[0,T])$ for all $T<T_\ep$ established in Theorem \ref{uin-ex-PS} is not high enough, and hence integration by parts does not make sense. To proceed, first of all, we enhance the local regularity of solution such that $\frac{\p u_\ep}{\p t}\in L^2_{loc}((0,T_\ep), H^3(\Om))$ by using the $L^2$-estimates of parabolic equation, since $\frac{\p u_\ep}{\p t}$ satisfies equation \eqref{w-eq1}:
\begin{equation}
\begin{cases}
\frac{\p v}{\p t}-\ep\De v-u_\ep\times\De v=f(u_\ep, \n v, v),\\[1ex]
v(x,0)=\frac{\p u_\ep}{\p t}|_{t=0},\quad \frac{\p v}{\p \nu}|_{\p\Om\times (0, T_\ep)}=0,
\end{cases}
\end{equation}
which is linear and uniformly parabolic equation when $\ep>0$. Here
\[f(u_\ep, \n v, v)=v\times \De u_\ep+2\ep\<\n u_\ep,\n v\>u_\ep+\ep |\n u_\ep|^2v\in L^{2}_{loc}([0, T_\ep), H^1(\Om)),\]
and
\[\frac{\p u_\ep}{\p t}|_{t=0}=\ep(\De u_0+|\n u_0|^2u_0)+u_0\times \De u_0.\]

Moreover, since $u_\ep\in L^\infty([0,T], H^3(\Om))$ and $\frac{\p u_\ep}{\p t}\in L^2([0,T], L^2(\Om))$ for all $T<T_\ep$, then $(2)$ of Lemma \ref{A-S} implies
\[u_{\ep}\in C^0([0,T], H^2(\Om)).\]
Immediately it follows that $u_\ep\in C^0(\Om\times [0,T])$. Indeed, for any $(x,t)$ and $(x_0,t_0)$ we have
\begin{equation}
\begin{aligned}
|u_\ep(x,t)-u_\ep(x_0,t_0)|\leq&|u_\ep(x,t)-u_\ep(x_0,t)|+|u_\ep(x_0,t)-u_\ep(x_0,t_0)|\\
\leq &\sup_{\Om}|\n u_\ep|(\cdot,t)|x-x_0|+|u_\ep(x_0,t)-u_\ep(x_0,t_0)|\\
\leq &C\norm{u_\ep}_{L^\infty([0,T], H^3(\Om))}|x-x_0|+C\norm{u_\ep(\cdot, t)-u_\ep(\cdot, t_0)}_{H^2(\Om)}.
\end{aligned}
\end{equation}
and hence it implies $u_\ep\in C^0(\Om\times [0,T])$. Hence, the $L^2$-theory of parabolic equation tells us that
\[\frac{\p^2 u_\ep}{\p t^2}\in L^{2}_{loc}((0,T_\ep), H^1(\Om)),\]
which guarantees the integration by parts in the process of energy estimates makes sense. For the fluency and shortness of this article, we give the above process of improving regularity of $\frac{\p u_\ep}{\p t}$ in Appendix \ref{ref-A}.

Now, we turn back to show the uniform $H^3$-estimates of $u_\ep$. To this end, we choose $-\De \frac{\p u_\ep}{\p t}$ as a test function of \eqref{w-eq1} and take a simple calculation to obtain
	
\begin{equation}
\begin{aligned}
&\frac{1}{2}\(\int_{\Om}|\n \frac{\p u_\ep(T)}{\p t}|^2dx-\int_{\Om}|\n \frac{\p u_\ep(0)}{\p t}|^2dx\)+\ep \int_0^T\int_{\Om}|\De \frac{\p u_\ep}{\p t}|^2dxdt\\
=&-\int_0^T\int_{\Om}\<\frac{\p u_\ep}{\p t}\times \De u_\ep, \De \frac{\p u_\ep}{\p t}\>dxdt-\ep\int_0^T\int_{\Om}\<\frac{\p}{\p t}(|\n u_\ep|^2u_\ep),\De \frac{\p u_\ep}{\p t}\>dxdt\\
=& V+W,
\end{aligned}
\end{equation}
since $\frac{\p u_\ep}{\p t} \in C^0([0,T], H^1(\Om))$ for $T<T_\ep$.
	
In order to get the desired estimates of $\frac{\p u_\ep}{\p t}$, we turn to estimating the terms $V$ and $W$.
\begin{equation}
\begin{aligned}
|V|=&\left|\int_0^T\int_{\Om}\<\frac{\p u_\ep}{\p t}\times \n \De u_\ep, \n \frac{\p u_\ep}{\p t}\>dxdt\right|\\
\leq &\frac{1}{1+\ep^2}\left|\int_0^T\int_{\Om}\<\frac{\p u_\ep}{\p t}\times(\n u_\ep \times \frac{\p u_\ep}{\p t}+u_\ep\times \n \frac{\p u_\ep}{\p t}), \n \frac{\p u_\ep}{\p t}\>dxdt\right|\\
&+\left|\int_0^T\int_{\Om}\<\frac{\p u_\ep}{\p t}\times\n(|\n u_\ep|^2u_\ep),\n \frac{\p u_\ep}{\p t}\>dxdt\right|.
\end{aligned}
\end{equation}
Here we have used the fact:
\[\De u_\ep=\frac{1}{1+\ep^2}(\ep \frac{\p u_\ep}{\p t}-u_\ep\times  \frac{\p u_\ep}{\p t})-|\n u_\ep|^2u_\ep.\]
Using again the Lagrangian formula, there holds
\begin{align*}
&\frac{\p u_\ep}{\p t}\times(u_\ep\times \n \frac{\p u_\ep}{\p t})\\
=&\<\n \frac{\p u_\ep}{\p t},\frac{\p u_\ep}{\p t}\>u_\ep-\<\frac{\p u_\ep}{\p t},u_\ep\>\n \frac{\p u_\ep}{\p t}\\
=&\<\n \frac{\p u_\ep}{\p t},\frac{\p u_\ep}{\p t}\>u_\ep.
\end{align*}
Noting $|u_\ep|=1$ implies
\[\<\n \frac{\p u_\ep}{\p t}, u_\ep\>=-\<\frac{\p u_\ep}{\p t}, \n u_\ep\>,\]
we have that
\begin{align*}
\<\frac{\p u_\ep}{\p t}\times(u_\ep\times \n \frac{\p u_\ep}{\p t}),\n \frac{u_\ep}{\p t}\>=&\<\n \frac{\p u_\ep}{\p t},\frac{\p u_\ep}{\p t}\>\cdot\<u_\ep,\n \frac{\p u_\ep}{\p t}\>\\
=&-\<\n \frac{\p u_\ep}{\p t},\frac{\p u_\ep}{\p t}\>\cdot\<\n u_\ep, \frac{\p u_\ep}{\p t}\>.
\end{align*}
Then, it follows that there holds
\begin{equation}
\begin{aligned}
|V|\leq& \frac{2}{1+\ep^2}\int_0^T\int_{\Om}|\frac{\p u_\ep}{\p t}|^2|\n u_\ep||\n \frac{\p u_\ep}{\p t}|dxdt\\
&+2\int_0^T\int_{\Om}(|\n u_\ep|^3+|\n^2u_\ep||\n u_\ep|)|\frac{\p u_\ep}{\p t}||\n \frac{\p u_\ep}{\p t}|dxdt\\
\leq &\frac{C}{1+\ep^2}\int_0^T\norm{\frac{\p u_\ep}{\p t}}^2_{L^6}\norm{\n u_\ep}_{L^6}\norm{\n \frac{\p u_\ep}{\p t}}_{L^2}dt\\
&+C\int_0^T(\norm{\n u_\ep}^3_{L^\infty}\norm{\frac{\p u_\ep}{\p t}}_{L^2}+\norm{\n u_\ep}_{L^\infty}\norm{\n^2 u_\ep}_{L^3}\norm{\frac{\p u_\ep}{\p t}}_{L^6})\norm{\n \frac{\p u_\ep}{\p t}}_{L^2}dt\\
\leq &C\int_0^T(\norm{u_\ep}^2_{H^2}+\norm{\frac{\p u_\ep}{\p t}}^6_{H^1})dt+C\int_0^T(\norm{u_\ep}^3_{H^3}+\norm{u_\ep}^2_{H^3})\norm{\frac{\p u_\ep}{\p t}}^2_{H^1}dt\\
\leq &C\int_0^T(1+\norm{u_\ep}^2_{H^2}+\norm{\frac{\p u_\ep}{\p t}}^2_{H^1})^6dt.
\end{aligned}
\end{equation}
Here we have used the estimate in Lemma \eqref{equi-norm}, and used the Sobolove embedding inquality
\[\norm{\n u_\ep}_{L^\infty}\leq C\norm{u_\ep}_{H^3}\leq C(1+\norm{u_\ep}^2_{H^2}+\norm{\frac{\p u_\ep}{\p t}}^2_{H^1})^{3/2}.\]
	
Next, we give the estimates of $W$ as follows.
\begin{equation}
\begin{aligned}
|W|\leq&\ep\int_0^T\int_{\Om}|\frac{\p}{\p t}(|\n u_\ep|^2u_\ep)||\De \frac{\p u_\ep}{\p t}|dxdt\\
\leq &C\ep\int_0^T\int_{\Om}|\frac{\p u_\ep}{\p t}|^2|\n u_\ep|^4+|\n u_\ep|^2|\n \frac{\p u_\ep}{\p t}|^2dxdt+\frac{\ep}{2}\int_0^T\int_{\Om}|\De \frac{\p u_\ep}{\p t}|^2dxdt\\
\leq& C\ep\int_0^T(\norm{u_\ep}^2_{H^2}+\norm{\frac{\p u_\ep}{\p t}}^2_{H^1})^3dt+\frac{\ep}{2}\int_0^T\int_{\Om}|\De \frac{\p u_\ep}{\p t}|^2dxdt.
\end{aligned}
\end{equation}
Here we have used the fact:
\[\int_{\Om}|\frac{\p u_\ep}{\p t}|^2|\n u_\ep|^4dx\leq \norm{\frac{\p u_\ep}{\p t}}^2_{L^6}\norm{\n u_\ep}^4_{L^6},\]
and
\[\norm{\n u_\ep}_{L^\infty}\leq C\norm{u_\ep}_{H^3}\leq C(\norm{u_\ep}^2_{H^2}+\norm{\frac{\p u_\ep}{\p t}^2_{H^1}})^{3/2}.\]
	
Therefore, we concludes
\begin{equation}\label{H^3}
\begin{aligned}
&\frac{1}{2}\int_{\Om}|\n \frac{\p u_\ep}{\p t}|^2dx|_{t=T}+\frac{\ep}{2} \int_0^T\int_{\Om}|\De \frac{\p u_\ep}{\p t}|^2dxdt\\
\leq &\frac{1}{2}\int_{\Om}|\n \frac{\p u_\ep}{\p t}|^2dx|_{t=0}+C\int_0^T(1+\norm{u_\ep}^2_{H^2}+\norm{\frac{\p u_\ep}{\p t}}^2_{H^1})^6dt.
\end{aligned}
\end{equation}
	
By combining the estimates \eqref{H^1},\eqref{H^2} with \eqref{H^3}, we can derive that there holds
\begin{equation}\label{es-u}
\begin{aligned}
&(\norm{u_{\ep}(T)}^2_{H^2}+\norm{\frac{\p u_{\ep}(T)}{\p t}}^2_{H^1})+\ep\int_{0}^{T}\int_{\Om}|\De \frac{\p u_\ep}{\p t}|^2dxdt\\
\leq &(\norm{u_0}_{H^2}+\norm{\frac{\p u_\ep}{\p t}}^2_{H^1}|_{t=0})+C\int_0^T(1+\norm{u_{\ep}}^2_{H^2}+\norm{\frac{\p u_{\ep}}{\p t}}^2_{H^1})^6dt
\end{aligned}
\end{equation}
for all $0<T<T_\ep$.
	
By using the Gronwall-type inequality in Lemma \ref{Gron-inq}, the desired estimates of approximated solution $u_\ep$ are derived from \eqref{es-u}, we formulated the estimates in the following lemma.
	
\begin{lem}\label{U-es: u_e}
There exists a positive number $T_0$ and a constant $C(T_0)$, which depends only on $\norm{u_0}_{H^3}$, such that for all $T<\min\{T_0, T_\ep\}$, the solution  $u_\ep$ obtained in Theorem \eqref{uin-ex-PS} satisfies the following uniform estimate:
\[\sup_{0<t\leq T}(\norm{u_{\ep}}^2_{H^2}+\norm{\frac{\p u_{\ep}}{\p t}}^2_{H^1})\leq C(T_0).\]
\end{lem}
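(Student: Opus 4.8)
The closed estimate \eqref{es-u} --- established with a constant $C$ independent of $\ep\in(0,1)$ --- already contains all the analytic substance; the plan here is simply to convert it, by a Gronwall argument, into an a~priori bound on a time interval that does not shrink as $\ep\to0$. Concretely, I would set
\[
y_\ep(t):=\norm{u_\ep(t)}^2_{H^2(\Om)}+\norm{\frac{\p u_\ep}{\p t}(t)}^2_{H^1(\Om)},
\]
which by Theorem \ref{uin-ex-PS} and the continuity properties recorded in Section \ref{s1} is a nonnegative continuous function on $[0,T_\ep)$. Discarding the nonnegative term $\ep\int_0^T\!\!\int_\Om|\De\frac{\p u_\ep}{\p t}|^2\,dx\,dt$ from the left-hand side of \eqref{es-u} leaves
\[
y_\ep(T)\le y_\ep(0)+C\int_0^T(1+y_\ep(s))^6\,ds,\qquad 0<T<T_\ep,
\]
which is exactly the hypothesis of the Gronwall-type Lemma \ref{Gron-inq} with $g\equiv0$ and $f(s)=C(1+s)^6$; here $f$ is continuous, nondecreasing, positive on $(0,\infty)$ and satisfies $\int_1^\infty f^{-1}\,dx=C^{-1}\int_1^\infty(1+x)^{-6}\,dx<\infty$.

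The only point requiring genuine verification is that $y_\ep(0)$ is controlled by $\norm{u_0}_{H^3}$ alone, uniformly in $\ep$. From \eqref{PS-eq} one reads off
\[
\frac{\p u_\ep}{\p t}\Big|_{t=0}=\ep(\De u_0+|\n u_0|^2u_0)+u_0\times\De u_0,
\]
which lies in $H^1(\Om)$ since $u_0\in H^3(\Om,\U^2)$; using the embedding $H^3(\Om)\hookrightarrow W^{1,\infty}(\Om)$ valid in dimension three together with $|u_0|\equiv1$, one estimates $\norm{\frac{\p u_\ep}{\p t}|_{t=0}}_{H^1}\le C(1+\norm{u_0}_{H^3})^3$ with $C$ independent of $\ep\in(0,1)$. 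Hence $y_\ep(0)\le\norm{u_0}^2_{H^2}+C(1+\norm{u_0}_{H^3})^6=:y_0$, a quantity depending only on $\norm{u_0}_{H^3}$ and the geometry of $\Om$.

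It then remains to invoke Lemma \ref{Gron-inq}, which yields a time $T^*>0$ and, for each $T<T^*$, a bound $C(T,y_0)$ --- both depending only on $y_0$ and $f$, hence only on $\norm{u_0}_{H^3}$ and $\Om$ --- with $\sup_{0\le t\le T}y_\ep(t)\le C(T,y_0)$. Concretely one may take $T^*=\int_{y_0}^\infty f^{-1}\,dx$ and compare $y_\ep$ with the solution $z$ of the scalar ODE $z'=f(z)$, $z(0)=y_0$; the monotone dependence of this ODE on its initial datum lets one fix $T_0:=\tfrac12T^*$ and $C(T_0):=\sup_{[0,T_0]}z$ once and for all, independently of $\ep$, and it also reconciles the fact that $y_\ep$ is defined only on $[0,T_\ep)$ with the half-line appearing in Lemma \ref{Gron-inq}. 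With these choices, $\sup_{0\le t\le T}\(\norm{u_\ep}^2_{H^2}+\norm{\frac{\p u_\ep}{\p t}}^2_{H^1}\)\le C(T_0)$ for every $T<\min\{T_0,T_\ep\}$, which is the assertion. There is no real obstacle at this stage: the delicate step was the chain of energy identities \eqref{H^1}, \eqref{H^2} and \eqref{H^3} culminating in \eqref{es-u}, and the only thing that demands care now is that every constant --- in particular the one in the bound for $y_\ep(0)$ --- stays independent of $\ep$.
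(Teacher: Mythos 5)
Your proposal is correct and follows essentially the same route as the paper: both define $y(t)=\norm{u_\ep}^2_{H^2}+\norm{\p_t u_\ep}^2_{H^1}$, bound $y(0)$ uniformly in $\ep$ via the equation at $t=0$ and $u_0\in H^3$, and close with the Gronwall-type Lemma \ref{Gron-inq} applied to \eqref{es-u} with $f(y)=C(1+y)^6$. Your explicit ODE-comparison remark and the attention to $y_\ep$ being defined only on $[0,T_\ep)$ are slightly more careful than the paper's write-up, but the argument is the same.
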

	
\begin{proof}
Let $$y(t)=(\norm{u_{\ep}}^2_{H^2}+\norm{\frac{\p u_{\ep}}{\p t}}^2_{H^1})(t).$$
Since $u_\ep \in L^2([0,T], H^4(\Om))$, $\frac{\p u_\ep}{\p t}\in L^2([0,T], H^2(\Om))$ and $\frac{\p^2 u_\ep}{\p t^2}\in L^2([0,T], L^2(\Om))$, the embedding Lemma \ref{C^0-em} implies
\[y(t)=(\norm{u_{\ep}}^2_{H^2}+\norm{\frac{\p u_{\ep}}{\p t}}^2_{H^1})(t)\in C^0([0,T_\ep)),\]
and hence
\[\frac{\p u_\ep}{\p t}|_{t=0}=\ep (\De u_0+|\n u_0|^2u_0)+u_0\times \De u_0,\]
and
		\[\n\frac{\p u_\ep}{\p t}|_{t=0}=\n(\ep\De u_0+\ep|\n u_0|^2u_0+u_0\times \De u_0).\]
Thus, a direct calculation shows
\[\norm{\frac{\p u_\ep}{\p t}}^2_{H^1}|_{t=0}\leq C(1+\ep^2)(1+\norm{u_0}^6_{H^3}).\]
\end{proof}

Let $y_0=C(1+\norm{u_0}^6_{H^3})$ and $f(y)=C(1+y)^6$. Then, the function $y(t)$ satisfies the following inequality
\[y(t)\leq y_0+\int_0^tf(y(s))ds.\]
Then, the Gronwall-type inequality in Lemma \ref{Gron-inq} implies that there exists a positive number $T_0>0$ depending only on $y_0$ and a constant $C(T_0)$ such that for all $T<\min\{T_0, T_\ep\}$ there holds
\[\sup_{0<t<T_0}y(t)\leq C(T_0).\]
Thus, the proof is completed.
\medskip
	
\section{regular solution to the Schr\"{o}dinger flow}\label{s2}
In this section, we prove the local existence of strong solutions to \eqref{S-eq} in Theorem \ref{thm1}. To this end, we need to give an uniform lower bound of existence times $T_\ep$ and the compactness of the approximation solution $u_\ep$ to \eqref{PS-eq}. Consequently, we can claim that the limit map $u$ of sequence $\{u_\ep\}$ is a strong solution to \eqref{S-eq}.
\begin{thm}
There exists a positive time $T_0$ depending only on $\norm{u_0}_{H^3(\Om)}$ such that the equation \eqref{S-eq} admits a local regular solution on $[0,T_0]$, which satisfies
\[u\in L^\infty([0,T_0], H^3(\Om)) \quad \text{and}\quad \frac{\p u}{\p t}\in L^\infty([0,T_0],H^1(\Om)).\]
\end{thm}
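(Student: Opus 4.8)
The plan is to pass to the limit $\ep\to 0$ in the parabolic perturbation equation \eqref{PS-eq}, using the uniform bounds from Lemma \ref{U-es: u_e} together with the equivalent-norm estimate of Lemma \ref{equi-norm}. First I would note that Lemma \ref{U-es: u_e} gives a time $T_0>0$ and a constant $C(T_0)$, both depending only on $\norm{u_0}_{H^3}$ and the geometry of $\Om$, such that $\sup_{0<t\le T}(\norm{u_\ep}^2_{H^2}+\norm{\p_t u_\ep}^2_{H^1})\le C(T_0)$ for all $T<\min\{T_0,T_\ep\}$. A standard continuation argument then forces $T_\ep\ge T_0$ for every $\ep\in(0,1)$: if some $T_\ep<T_0$, then by Remark \ref{es-u_ep}(1) the quantity $\sup_{t<T_\ep}(\norm{u_\ep}^2_{H^3}+\norm{\p_t u_\ep}^2_{H^1})$ together with the relevant time-integrals would blow up, but the combination of the uniform $H^2$-bound, the $H^1$-bound on $\p_t u_\ep$ and the inequality $\norm{u_\ep}^2_{H^3}\le C(1+\norm{u_\ep}^2_{H^2}+\norm{\p_t u_\ep}^2_{H^1})^3$ from Lemma \ref{equi-norm}(2) keeps $\norm{u_\ep}_{H^3}$ bounded up to $T_\ep$, and a further inspection of the parabolic estimate \eqref{es-PS} (which only degenerates in $\ep$, not in $\norm{u_\ep}_{H^3}$) contradicts maximality. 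Hence all $u_\ep$ are defined on $[0,T_0]$ and satisfy the uniform bounds there.

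Next I would extract a convergent subsequence. From Lemma \ref{U-es: u_e} and Lemma \ref{equi-norm} we have $u_\ep$ bounded in $L^\infty([0,T_0],H^3(\Om))$, $\p_t u_\ep$ bounded in $L^\infty([0,T_0],H^1(\Om))$, hence $\p_t u_\ep$ bounded in $L^2([0,T_0],H^1(\Om))$ and, using equation \eqref{PS-eq} and Proposition \ref{es-h_d}-type control of lower-order terms, $\De u_\ep$ is controlled so that $u_\ep\in L^2([0,T_0],H^2(\Om))$ with $\p_t u_\ep\in L^2([0,T_0],L^2(\Om))$. Applying the Aubin--Lions--Simon Lemma \ref{A-S} with $X=H^3$, $B=H^{3-\de}$ (or $X=H^2,B=H^1$) and the $C^0$-embedding Lemma \ref{C^0-em}, I obtain, along a subsequence, $u_\ep\to u$ strongly in $C^0([0,T_0],H^2(\Om))$ and weakly-$*$ in $L^\infty([0,T_0],H^3(\Om))$, and $\p_t u_\ep\rightharpoonup \p_t u$ weakly-$*$ in $L^\infty([0,T_0],H^1(\Om))$. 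The strong $C^0([0,T_0],H^2)$ convergence is enough to pass to the limit in the nonlinear terms: $u_\ep\times\De u_\ep\to u\times\De u$ in the sense of distributions (strong $H^2$ convergence of $u_\ep$ and weak $L^2$ convergence of $\De u_\ep$, together with $u_\ep\to u$ in $C^0(\overline\Om\times[0,T_0])$ by the Sobolev embedding $H^2\hookrightarrow C^0$ in dimension $3$), while $\ep(\De u_\ep+|\n u_\ep|^2u_\ep)\to 0$ in, say, $L^2$ since $\De u_\ep$ and $|\n u_\ep|^2 u_\ep$ are uniformly bounded in $L^2$. Thus $u$ solves $\p_t u=u\times\De u$ weakly. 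The constraint $|u|=1$ passes to the limit from $|u_\ep|\equiv 1$ by the strong $C^0$ convergence, and the Neumann condition $\p u/\p\nu|_{\p\Om}=0$ passes via the trace theorem, exactly as in the argument proving $\p u_{T_1}/\p\nu|_{\p\Om}=0$ in the proof of Theorem \ref{uin-ex-PS}. The regularity classes $u\in L^\infty([0,T_0],H^3)$ and $\p_t u\in L^\infty([0,T_0],H^1)$ follow from lower semicontinuity of the norms under weak-$*$ convergence.

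The main obstacle I expect is justifying that the $\ep$-independent time $T_0$ really is independent of $\ep$ in a way compatible with the maximal existence time of Theorem \ref{uin-ex-PS}: one must rule out $T_\ep$ shrinking to $0$ as $\ep\to 0$. This is precisely where the wave-type reformulation \eqref{w-eq2} and the equivalent-norm Lemma \ref{equi-norm} do the essential work, converting the a priori energy inequality \eqref{es-u} into a closed differential inequality $y'(t)\le C(1+y(t))^6$ for $y=\norm{u_\ep}^2_{H^2}+\norm{\p_t u_\ep}^2_{H^1}$, whose blow-up time depends only on $y(0)\le C(1+\norm{u_0}^6_{H^3})$; feeding this back through Lemma \ref{equi-norm}(2) controls $\norm{u_\ep}_{H^3}$ and hence, by Remark \ref{es-u_ep}(1), extends $T_\ep$ past $T_0$. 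Once this uniform-in-$\ep$ lifespan is secured, the compactness and limit passage are routine. A secondary point requiring care is the weak convergence $\De u_\ep\rightharpoonup \De u$: one identifies the weak limit of $\De u_\ep$ in $L^2$ with $\De u$ because $u_\ep\to u$ in $\mathcal D'$ and $\De$ is continuous on distributions, and the Neumann boundary condition is preserved in the limit by the same trace argument as above.
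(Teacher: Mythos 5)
Your proposal follows essentially the same three-step route as the paper's proof: a continuation argument built on Lemma \ref{U-es: u_e}, Lemma \ref{equi-norm} and the blow-up criterion of Remark \ref{es-u_ep} to secure a uniform-in-$\ep$ lifespan $T_0<T_\ep$, then Aubin--Lions--Simon compactness, then passage to the limit in the weak formulation together with the trace argument for the Neumann condition. The only place the paper is more explicit is in verifying the blow-up criterion: it invokes the parabolic regularity estimates of Theorem \ref{reg-es} in Appendix A (applied to the linear equation satisfied by $\p_t u_\ep$) to show $\p_t u_\ep\in L^2([\de,T_\ep],H^2(\Om))$ and $\p^2_t u_\ep\in L^2([\de,T_\ep],L^2(\Om))$, where you instead gesture at \eqref{es-PS}; this is a difference of detail, not of substance.
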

\begin{proof}
We divide the proof into three steps.
\medskip
		
\noindent\emph{Step 1: The uniform positive lower bound of $T_\ep$.}
\medskip

We claim $T_0<T_\ep$, where $T_0$ was obtained in Lemma \ref{U-es: u_e}. Suppose that $T_0\geq T_\ep$, then Lemma \ref{U-es: u_e}  and Lemma \ref{equi-norm} tell us that there holds
\[\sup_{0<t<T_\ep}(\norm{u_{\ep}}^2_{H^3}+\norm{\frac{\p u_{\ep}}{\p t}}^2_{H^1})\leq C(T_0).\]
Since $v=\frac{\p u_\ep}{\p t}$  satisfies the following equation
\begin{equation*}
\begin{cases}
\frac{\p v}{\p t}-\ep\De v-u_\ep\times\De v=f(u_\ep, \n v, v),\\[1ex]
v(x,0)=\frac{\p u_\ep}{\p t}|_{t=0},\quad \frac{\p v}{\p t}|_{\p\Om\times (0, T_\ep)}=0,
\end{cases}
\end{equation*}
where
\[f(u_\ep, \n v, v)=v\times \De u_\ep+2\ep\<\n u_\ep,\n v\>u_\ep+\ep |\n u_\ep|^2v,\]
and we can easily to verify that the homogeneous term $f\in L^2([0,T_\ep], L^2(\Om))$, it follows from the $L^2$-estimates in Theorem \ref{reg-es} in Appendix A that
\[\frac{\p u_\ep}{\p t}\in L^2([\de, T_\ep], H^2(\Om)),\]
and
\[\frac{\p ^2u_\ep}{\p t^2}\in L^2([\de, T_\ep], L^2(\Om))\]
for any small $0<\de<T_\ep$. By equation \eqref{PS-eq}, we know that there holds
\[\sup_{0<t<T_\ep}(\norm{u_{\ep}}^2_{H^3}+\norm{\frac{\p u_{\ep}}{\p t}}^2_{H^1})+\int_{0}^{T_\ep}\int_{\Om}(\norm{u_\ep}^2_{H^4(\Om)}+\norm{\frac{\p u_\ep}{\p t}}^2_{H^2(\Om)}+\norm{\frac{\p^2 u_\ep}{\p t^2}}^2_{L^2(\Om)})dt<\infty.\]
Thus, it implies that the existence interval $[0, T_\ep]$ can be extended, which is a contradiction with the definition of $T_\ep$. Therefore, we have
\[T_0< T_\ep.\]
\medskip
		
\noindent\emph{Step 2: The compactness of $u_\ep$.}\
\medskip
		
Lemma \ref{U-es: u_e} tells us there exists a constant $C(T_0)$, which is independent of $\ep$, such that there holds true
\[\sup_{0<t\leq T_0}\norm{u_\ep}_{H^3}+\sup_{0<t\leq T_0}\norm{\frac{\p u_\ep}{\p t}}_{H^1}\leq C(T_0).\]
Without loss of generality, we assume that there exists a map in $u\in L^\infty([0,T_0], H^3(\Om))$ such that
\[u_\ep\rightharpoonup u\quad \text{weak* in}\quad u\in L^\infty([0,T_0], H^3(\Om)),\]
\[\frac{\p u_\ep}{\p t}\rightharpoonup \frac{\p u}{\p t}\quad \text{weakly* in}\quad L^\infty([0,T_0], H^1(\Om)),\]
\[\frac{\p u_\ep}{\p t}\rightharpoonup \frac{\p u}{\p t}\quad \text{weakly in}\quad L^2([0,T_0], H^1(\Om)).\]
Let $X=H^3(\Om)$, $B=H^2(\Om)$ and $Y=L^2$, by Lemma \ref{A-S} we have
\[u_\ep\to u \quad \text{strongly in}\quad L^\infty([0,T_0], H^2(\Om)),\]
and hence
\[u_\ep\to u \quad  \text{a.e. (x,t)} \in \Om\times[0,T_0].\]
It follows immediately that $|u|=1$ for a.e. $(x,t) \in [0,T_0]\times\Om$.
\medskip
		
\noindent\emph{Step 3: The regular solution to \eqref{S-eq}.}\
\medskip
		
Since $u_\ep$ is a strong solution to \eqref{PS-eq}, there holds
\[\int_{0}^{T_0}\int_{\Om}\<\frac{\p u_\ep}{\p t},\phi\>dxdt-\ep\int_{0}^{T_0}\int_{\Om}\<\De u_\ep+|\n u_\ep|^2 u_\ep,\phi\>dxdt=\int_{0}^{T_0}\int_{\Om}\<u_\ep\times \De u_\ep,\phi\>dxdt,\]
for all $\phi\in C^\infty(\bar{\Om}\times[0,T])$. By using the convergence results on $u_\ep$ in Step $2$, it is easy to show directly $u$ is a strong solution to \eqref{S-eq} by letting $\ep\to 0$.
		
To complete the proof, we need to check $u$ satisfies the Neumann boundary condition, that is $\frac{\p u}{\p \nu}|_{\p \Om\times[0,T_0]}=0$. Since for any $\xi\in C^{\infty}(\bar{\Om}\times[0,T_0])$, there holds
$$\int_{0}^{T_0}\int_{\Om}\<\De u_{\ep},\xi\>dxdt=-\int_{0}^{T_0}\int_{\Om}\<\n u_\ep,\n \xi\>dxdt.$$
Letting $n\to \infty$, we have
$$\int_{0}^{T_0}\int_{\Om}\<\De u,\xi\>dxdt=-\int_{0}^{T_0}\int_{\Om}\<\n u,\n \xi\>dxdt.$$
This means that there holds true $$\frac{\p u}{\p \nu}|_{\p \Om\times [0,T_0]}=0.$$
\end{proof}
	
\section{The uniqueness of solutions to the Schr\"{o}dinger flow}\label{s3}
In this section, we show the uniqueness of the solution $u$ to \eqref{S-eq}(or \eqref{S-eq1}) in the space
\[\S=\{u\, |u\in L^\infty([0,T], H^3(\Om))\,\, \text{and}\,\, \frac{\p u}{\p t}\in L^\infty([0,T], H^1(\Om))\}.\]
We will only give the sketches of the proof for the uniqueness in Theorem \ref{thm1}, since the arguments go almost the same as that of the proof of uniqueness for the Schr\"{o}dinger flows from a general compact Riemannian manifold to a K\"{a}hler manifold in \cite{S-W}, and need only to modify some of their treatments to match the Neumann boundary conditions such that integration by parts holds true. Their proof adopted the following intrinsic energy:
\[Q_1=\int_{\Om}d^2(u_1,u_2)dx\]
and
\[Q_2=\int_{\Om}|\P \n_2 u_2-\n_1 u_1|^2dx=\int_{\Om}|\Phi|^2dx,\]
and applies the geometric energy method to achieve a Gronwall-type inequality for $Q_1+Q_2$, from which the uniqueness of solutions follows. For the sake of simplicity and fluency, some notations about moving frame and parallel transportation as well as some critical lemmas are given in Appendix \ref{ref-B}, for more details we refer to \cite{S-W}. Now, we turn to presenting the proof.
	
\begin{proof}[\textbf{The proof of the uniqueness in Theorem \ref{thm1}}]\
		
Let $u_1, u_2: \Om\times[0,T]\to \U^2$ be two solution to \eqref{S-eq} in space $\S$. An important fact is that the uniqueness is a local property. Namely, once we know $u_1 = u_2$ on a small time interval $[0, T^\prime]$, then we can prove $u_1 = u_2$ on the whole interval $[0,T]$ by repeating the
argument. Therefore, we only need to prove the uniqueness in a small interval $[0,T^\prime]$. Our proof divides into three steps.
		
\medskip
\noindent\emph{Step 1: Estimate of the distance $d(u_1,u_2)$.}\

Since $u_l\in\S$, the embedding Lemma \ref{A-S} implies $u_l\in C^0([0,T], H^2(\Om))$, and hence $u_l\in C^0(\bar{\Om}\times[0,T])$. Therefore,
\[\norm{u_l-u_0}_{C^0(\bar{\Om})}(t)\leq C\norm{u_l-u_0}_{H^2(\Om)}(t)\to 0,\]
as $t\to 0$.
		
By using the fact $\U^2$ is of bounded geometry and taking $t\leq T^\prime$ with $T^\prime$ small enough, then there exists a constant $C$ such that
\[d(u_l,u_0)(t)\leq C\norm{u_l-u_0}_{C^0(\bar{\Om}\times[0,T^\prime])}<\frac{\pi}{4},\]
for $0<t\leq T^\prime$ and $l=1,2$. This step guarantees the parallel transportation between two solutions $u_1$ and $u_2$ can be well-defined (to see Sectin \ref{s-B}).
\medskip
		
\noindent\emph{Step 2: Estimate of $Q_1$.}\

Let $d:\U^2\times\U^2\to \Real$ be the distance function on $\U^2$, and $\tilde{\n}=\n^{\U^2}\otimes\n^{\U^2}$ be the product connection on $\U^2\times\U^2$. Suppose $\{x_1,x_2,x_3\}$ is the coordiniates of $\Om$, and $\n_i=\n_{\frac{\p}{\p x_i}}$ is the bull-back connection on $u^*_{1}T\U^2$ (or $u^*_{2}T\U^2$). Then a direct calculation shows
\begin{equation}
\begin{aligned}
\frac{\p}{\p t}\int_{\Om}d^2(u_1,u_2)dx=&\int_{\Om}\<\tilde{\n}d^2,( \n_i(J\n_iu_1), \n_i(J\n_i u_2))\>dx\\
=&\int_{\Om}\<\tilde{\n}d^2(\cdot, u_2),\n_i(J\n_iu_1)\>dx+\int_{\Om}\<\tilde{\n}d^2(u_1,\cdot),\n_i(J\n_iu_2)\>dx\\
=&\int_{\Om}\frac{\p}{\p x_i}\<\tilde{\n}d^2(\cdot, u_2),J\n_iu_1\>dx-\int_{\Om}\<\n_i\tilde{\n}d^2(\cdot, u_2),J\n_iu_1\>dx\\
&+\int_{\Om}\frac{\p}{\p x_i}\<\tilde{\n}d^2(u_1,\cdot),J\n_iu_1\>dx-\int_{\Om}\<\n_i\tilde{\n}d^2(u_1,\cdot),J\n_iu_1\>dx\\
=&-(\int_{\Om}\<\n_i\tilde{\n}d^2(\cdot, u_2),J\n_iu_1\>dx+\int_{\Om}\<\n_i\tilde{\n}d^2(u_1,\cdot),J\n_iu_1\>dx)\\
=&-\int_{\Om}\<\n\tilde{\n}d^2, (J\n u_1,J\n u_2)\>dx\\
=&-\int_{\Om}\tilde{\n}^2d^2(X,Y)dx,
\end{aligned}
\end{equation}
where $X=(\n u_1, \n u_2)$ and $Y=(J \n u_1, J \n u_2)$. Moreover, here we have used the Stokes formula:
\[\int_{\Om}\frac{\p}{\p x_i}\<\tilde{\n}d^2(\cdot, u_2),J\n_iu_1\>dx=\int_{\p\Om}\<\tilde{\n}d^2(\cdot, u_2),J(\n_iu_1\cdot\nu_i)\>ds=0,\]
and
\[\int_{\Om}\frac{\p}{\p x_i}\<\tilde{\n}d^2(u_1, \cdot),J\n_iu_2\>dx=\int_{\p\Om}\<\tilde{\n}d^2(u_1, \cdot),J(\n_i u_2\cdot\nu_i)\>ds=0,\]
since the Neumann boundary conditions: $\sum_{i=1}^3 \n_i u_l \cdot\nu_i=0$ for $l=1, 2$, where $\nu$ is the normal outer vector of $\p\Om$ and $ds$ is the area element of $\p \Om$.
		
Therefore, Lemma \ref{es-dist} gives
\[\frac{1}{2}\frac{\p}{\p t}Q_1\leq Q_2+C(\norm{u_1}^2_{H^3(\Om)}+\norm{u_2}^2_{H^3(\Om)})Q_1.\]
		
\medskip
		
\noindent\emph{Step 3: Estimate of $Q_2$.}

Let $\{e_\al\}_{\al=1}^2$ be a local frame of the pull-back bundle $u^*_1T\U^2$ over $\Om\times[0,T]$, such that the complex structure $J$ in this frame is reduced to $J_0=\sqrt{-1}$. Let $\n_l=u_l^*\n^{\U^2}$, the parallel transportation $\P$ and
\[\Phi=\P \n_2 u_2-\n_1 u_1=\phi_2-\phi_1\]
be defined in Appendix \ref{ref-B}. Then, a direct calculation shows
\begin{equation}
\begin{aligned}
&\frac{1}{2}\frac{\p}{\p t}\int_{\Om}|\P \n_2 u_2-\n_1 u_1|^2dx=\frac{1}{2}\frac{\p}{\p t}\int_{\Om}|\Phi|^2dx\\
=&\int_{\Om}\<\Phi,\n_{1,t}\Phi\>dx=\int_{\Om}\<\Phi,(\n_{1,t}-\n_{2,t})\phi_2\>dx\\
&-\int_{\Om}\<\Phi, (\n_1\n_{1,i}-\n_2\n_{2,i})J_0\phi_{2,i}\>dx+\int_{\Om}\<\Phi, \n_1(J_0\n_{1,i} \Phi_i)\>dx\\
=&I+II+III.
\end{aligned}
\end{equation}

Next, we estimate the above three terms step by steps.
\begin{align*}
|I|\leq&Q_{2}+C\int_{\Om}|(\n_{1,t}-\n_{2,t})\phi_2|^2dx\\
\leq &Q_{2}+C\norm{\n_2u_2}^2_{L^\infty(\Om)}\int_{\Om}d^2(u_1,u_2)(|\n_t u_2|^2+|\n_t u_1|^2)dx\\
\leq &Q_{2}+C\norm{u_2}^2_{H^3(\Om)}\norm{d(u_1, u_2)}^2_{L^4}(\norm{\frac{\p u_1}{\p t}}^2_{L^4}+\norm{\frac{\p u_2}{\p t}}^2_{L^4})\\
\leq &Q_{2}+C\norm{u_2}^2_{H^3(\Om)}(\norm{\frac{\p u_1}{\p t}}^2_{H^1}+\norm{\frac{\p u_2}{\p t}}^2_{H^1})\norm{d(u_1, u_2)}^2_{H^1}\\
\leq &C\norm{u_2}^2_{H^3(\Om)}(\norm{\frac{\p u_1}{\p t}}^2_{H^1}+\norm{\frac{\p u_2}{\p t}}^2_{H^1}+1)(Q_1+Q_2).
\end{align*}
Here, in the second line and the last line of the above inequality, we have used Lemma \ref{es-con} and the fact
\[|\n d(u_1,u_2)|\leq |\Phi|=|\P \n_2 u_2-\n_1u_1|\]
respectively.		
By taking a analogous argument to that for term $I$ and using Lemma \ref{es-con}, we have
\[|II|\leq C(\norm{u_2}^2_{H^3(\Om)}+\norm{u_2}^2_{H^3(\Om)}+1)\norm{u_2}^2_{H^3}(Q_1+Q_2).\]
		
For the term $III$, a simple calculation shows
\begin{align*}
III=&\int_{\Om}\mbox{div}\<\Phi, J_0\n_1\cdot\Phi\>dx-\int_{\Om}\<\n_1\cdot\Phi, J_0\n_1\cdot\Phi\>dx=0.
\end{align*}
Here we denote $\n_1\cdot \Phi=\n_{1,i}\Phi_i$ and have used the Stokes formula:
\begin{align*}
&\int_{\Om}\mbox{div}\<\Phi, J_0\n_1\cdot\Phi\>dx\\
=&\int_{\p \Om}\<\P \n_{2,i} u_2\cdot \nu_i-\n_1 u_{1,i}\cdot\nu_i, J_0\n_1\cdot\Phi\>ds=0
\end{align*}
since we have the Neumann boundary condition
\[\sum_{i=1}^3 \n_i u_l \cdot\nu_i=0,\quad l=1,2.\]
		
Therefore, by combining the estimates in Step 2 and Step 3, we have that for all $0<t\leq T^\prime$ there holds true
\[\frac{\p}{\p t}(Q_1+Q_2)\leq C(Q_1+Q_2),\]
which implies the uniqueness of solutions. Thus, the proof is completed.
\end{proof}

\medskip
\appendix
\renewcommand{\appendixname}{Appendix~\Alph{section}}
	
\section{Locally regular estimates of $u_\ep$}\label{ref-A}
In this section, we establish the regular estimates of the solution $v:\Om\times [0,T)\to \Real^3$ to the following uniform parabolic equation:
\begin{equation}\label{h-eq}
\begin{cases}
\frac{\p v}{\p t}-\ep\De v-u\times \De v=f(x, t), \quad (x,t)\in \Om\times [0,T],\\[1ex]
v(x,0)=v_0:\Om\to \Real^3,\quad \frac{\p v}{\p \nu}|_{\p\Om\times [0, T]}=0.
\end{cases}
\end{equation}
where
\begin{equation}\label{cond1}
u\in L^\infty ([0,T], H^3(\Om))\cap C^0(\Om\times[0,T])
\end{equation}
and
\begin{equation}\label{cond2}
f(x,t)\in L^{2}([0, T], H^1(\Om)).
\end{equation}
	
Our main result on locally regular estimates of solution $v$ to the above equation \eqref{h-eq} is as follows.
\begin{thm}\label{reg-es}
Let $v\in W^{2,1}_{2}(\Om\times [0,T])$ is a strong solution to \eqref{h-eq} satisfying condtions \eqref{cond1} and \eqref{cond2}. Then  $v\in L^\infty_{loc} ((0,T],H^2(\Om))\cap L^2_{loc}((0,T],H^3(\Om))$. Moreover, for any $\de>0$, there exists a positive constant $C(\de)$ depending on $\norm{u}_{L^\infty([0,T], H^3(\Om))}$ such that there holds
\begin{align*}
&\norm{v}_{L^2([\de,T],H^2(\Om))}+\norm{\frac{\p v}{\p t}}_{L^{2}([\de,T],L^2(\Om))}
\leq C(\de)\(\norm{v}_{L^2([0,T]\times \Om)}+\norm{f}_{L^2(\Om\times[0,T])}\)
\end{align*}
and
\begin{align*}
&\norm{v}_{L^2([\de,T],H^3(\Om))}+\norm{\frac{\p v}{\p t}}_{L^{2}([\de,T],H^1(\Om))}
\leq C(\de)\(\norm{v}_{L^2([0,T], H^1(\Om))}+\norm{f}_{L^2([0,T], H^1(\Om))}\).
\end{align*}
\end{thm}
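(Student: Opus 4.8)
The plan is to prove the two estimates in Theorem \ref{reg-es} by a bootstrapping argument, treating the equation \eqref{h-eq} as a linear uniformly parabolic system (uniform because $\ep>0$) with lower-order coefficients controlled by $\norm{u}_{L^\infty([0,T],H^3(\Om))}$. The key point is that the principal part $\frac{\p v}{\p t}-\ep\De v - u\times\De v$ is, for each fixed time, a uniformly elliptic system on $\Real^3$-valued functions: indeed $w\mapsto \ep\,\mathrm{Id}(\De w) + (u\times)\De w$ has principal symbol $-|\xi|^2(\ep\,\mathrm{Id} + u\times)$, and since $u\times$ is skew-symmetric one has $\langle(\ep\,\mathrm{Id}+u\times)\zeta,\zeta\rangle = \ep|\zeta|^2$, so the Legendre–Hadamard (indeed strong) ellipticity constant is exactly $\ep$, and the full $L^p$ / $L^2$ parabolic regularity theory (Ladyzhenskaya–Solonnikov–Uraltseva, or the Hilbert-space version in \cite{Weh}-type references) applies to \eqref{h-eq} with Neumann boundary conditions. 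The only subtlety is that the coefficient $u=u(x,t)$ is merely in $L^\infty_t H^3_x \subset L^\infty_t C^{1}_x \cap C^0(\bar\Om\times[0,T])$ by Sobolev embedding in dimension $3$, so it is continuous (hence admits the needed localization in time) but not smooth in $t$; this is why the estimates are stated only on $[\de,T]$ and why we cannot differentiate the equation in $t$ without care.

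\textbf{Step 1 (first estimate, $H^2$ in $x$ and $L^2$ in $t$ for $\p_t v$).} Fix $0<\de<T$ and a smooth cutoff $\chi(t)$ with $\chi\equiv 0$ on $[0,\de/2]$, $\chi\equiv 1$ on $[\de,T]$. Multiply \eqref{h-eq} by $\chi^2\,(-\De v)$ and integrate over $\Om\times[0,T]$; the boundary terms vanish because $\frac{\p v}{\p\nu}|_{\p\Om}=0$ (and consequently, after one integration by parts in the $u\times\De v$ term, the remaining boundary contribution involves $\frac{\p v}{\p\nu}$ again and is zero). Using the skew-symmetry of $u\times$ to see that the cross term $\int \chi^2\langle u\times\De v,\De v\rangle = 0$ after the appropriate integration by parts — more precisely $\int\langle u\times\De v,-\De v\rangle$ produces only a term $\int\langle (\n u\times \n v),\De v\rangle$-type remainder bounded by $\norm{\n u}_{L^3}\norm{\n v}_{L^6}\norm{\De v}_{L^2}$ — together with $\int\chi^2\langle\p_t v,-\De v\rangle = \frac12\frac{\p}{\p t}\int\chi^2|\n v|^2 - \int\chi\chi'|\n v|^2$, one obtains, after absorbing $\ep\norm{\De v}_{L^2}^2$ on the left and using Young's inequality and Lemma \ref{eq-norm} to convert $\norm{\De v}_{L^2}$ control into $\norm{v}_{H^2}$ control,
\[
\sup_{[\de,T]}\norm{\n v}_{L^2}^2 + \int_\de^T\norm{v}_{H^2(\Om)}^2\,dt \le C(\de,\ep,\norm{u}_{L^\infty H^3})\Big(\norm{v}_{L^2(\Om\times[0,T])}^2+\norm{f}_{L^2(\Om\times[0,T])}^2\Big).
\]
Then, reading \eqref{h-eq} as $\p_t v = \ep\De v + u\times\De v + f$ and taking $L^2(\Om\times[\de,T])$ norms gives the bound on $\norm{\p_t v}_{L^2([\de,T],L^2)}$, completing the first estimate.

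\textbf{Step 2 (second estimate, one derivative more).} Repeat the localization with a cutoff $\chi_1$ supported in $(\de/2,T]$ and equal to $1$ on $[\de,T]$, but now apply a spatial derivative $\p_k$ to \eqref{h-eq}. The function $w=\p_k v$ solves a parabolic system of the same type with the extra inhomogeneity $\p_k u\times\De v = \p_k u \times \De v$, which, since $\p_k u\in L^\infty_t H^2_x\hookrightarrow L^\infty_t L^\infty_x$ and $\De v\in L^2_{t,\mathrm{loc}}L^2_x$ by Step 1, lies in $L^2_{\mathrm{loc}}((0,T],L^2(\Om))$; one also checks $\p_k u\times\De v\in L^2_{\mathrm{loc}}((0,T],L^2)$ with the right bound in terms of $\norm{f}_{L^2 H^1}$ and $\norm{v}_{L^2 H^1}$ via Step 1. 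The Neumann condition for $v$ gives a mixed condition for $\p_k v$, but after testing against $\chi_1^2(-\De\p_k v)$ and summing over $k$ — equivalently testing the original equation against $\chi_1^2\De^2 v$ and controlling the resulting boundary terms, which again vanish because $\frac{\p v}{\p\nu}=0$ and $\frac{\p(\De v)}{\p\nu}=0$ follows from the equation and the Neumann data for $v$ and $f$ (this is the same mechanism as in Lemma \ref{comp-bdy}) — one arrives at
\[
\sup_{[\de,T]}\norm{v}_{H^2}^2 + \int_\de^T\norm{v}_{H^3(\Om)}^2\,dt \le C(\de,\ep,\norm{u}_{L^\infty H^3})\Big(\norm{v}_{L^2([0,T],H^1)}^2+\norm{f}_{L^2([0,T],H^1)}^2\Big),
\]
and then reading off $\p_t v = \ep\De v + u\times\De v + f\in L^2([\de,T],H^1)$ from the equation finishes the proof. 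A standard time-translation / chaining argument upgrades these a priori estimates, valid for the $W^{2,1}_2$ strong solution by approximation (e.g. via the Galerkin scheme of Section \ref{s1} or difference quotients in $x$), to the asserted regularity $v\in L^\infty_{\mathrm{loc}}((0,T],H^2)\cap L^2_{\mathrm{loc}}((0,T],H^3)$.

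\textbf{Main obstacle.} The delicate point is \emph{not} the interior estimate — that is classical once one observes the $\ep$-uniform ellipticity coming from skew-symmetry of $u\times$ — but rather justifying the boundary integrations by parts at the level of a merely $W^{2,1}_2$ strong solution with a coefficient $u$ of limited regularity. Concretely, one must know that $\De v$ (and, for the second estimate, $\n\De v$) has enough trace regularity on $\p\Om$ for the Neumann conditions $\frac{\p v}{\p\nu}|_{\p\Om}=0$, $\frac{\p\De v}{\p\nu}|_{\p\Om}=0$ to be meaningful and for the boundary terms to genuinely vanish; this is handled exactly as in Lemma \ref{comp-bdy}, approximating $v$ by solutions of the regularized (Galerkin-projected) problem, passing the boundary identities to the limit, and then using the compatibility $\frac{\p f}{\p\nu}|_{\p\Om}=0$ implicit in the application (where $f$ itself is built from $u$ satisfying Neumann data) to close the argument for the second estimate. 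The dependence of $C(\de)$ on $\ep$ is irrelevant here since in the body of the paper this theorem is only invoked for fixed $\ep>0$ to upgrade the regularity of $\p_t u_\ep$ before taking $\ep\to 0$.
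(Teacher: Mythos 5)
Your overall strategy---localize in time with a cutoff, run energy estimates with $v$, $-\De v$, $\De^2 v$ as multipliers, and justify the formal computation through a Galerkin scheme---is the same as the paper's. But the execution differs at the one point where the paper's proof is actually doing work, and as written your argument has a gap there. The paper does \emph{not} perform the a priori estimates on $v$ itself. It introduces $\om=\eta v$, observes that $\om$ solves the same equation with zero initial data and source $\tilde f=\eta f-\eta' v\in L^2([0,T],H^1)$, constructs a \emph{new} solution $w$ of that problem by Galerkin approximation in the Neumann eigenbasis of $\De-I$, derives the $H^1$, $H^2$, $H^3$ energy estimates for the approximants $w^n$, passes to the limit, and only then identifies $w=\eta v$ by uniqueness of strong solutions. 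The point of the eigenbasis is that $\De w^n=\sum g_i(1-\ld_i)^2 f_i$ lies again in the span and satisfies $\frac{\p\De w^n}{\p\nu}|_{\p\Om}=0$ identically, so when one tests with $\De^2w^n$ every boundary term vanishes for free and \emph{no compatibility condition on $f$ (or $u$) at the boundary is needed}. Your direct computation on $v$ instead requires $\frac{\p\De v}{\p\nu}|_{\p\Om}=0$, which you propose to deduce ``from the equation and the Neumann data for $v$ and $f$''; but the theorem assumes only $f\in L^2([0,T],H^1(\Om))$ and $u\in L^\infty H^3\cap C^0$, with no Neumann condition on either, so this deduction is not available and the boundary terms in your Step~2 do not vanish for the class of data the theorem covers. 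You flag this in your ``main obstacle'' paragraph and point to the Galerkin scheme as the fix---which is correct, but that fix \emph{is} the proof, and invoking the compatibility ``implicit in the application'' proves a weaker statement than the one asserted.

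Two smaller points. First, in Step~1 the term $\int\chi\chi'|\n v|^2$ produced by the cutoff is not directly controlled by $\norm{v}_{L^2(\Om\times[0,T])}$; you need either a preliminary zeroth-order estimate (test with $\chi^2 v$, as the paper does with $w^n$ before $\De w^n$) or the integration by parts $\chi\chi'\int|\n v|^2=-\chi\chi'\int\<v,\De v\>$ followed by absorption into $\ep\chi^2\int|\De v|^2$. Second, the cross term $\int\chi^2\<u\times\De v,\De v\>$ vanishes pointwise by skew-symmetry with no integration by parts and no $\n u$ remainder; the remainder you describe only appears at the next level, as $-\int\<\n u\times\De w^n,\n\De w^n\>$, where it is estimated by $\norm{\n u}_{L^\infty}\norm{\De w^n}_{L^2}\norm{\n\De w^n}_{L^2}$ and absorbed. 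Your observation that skew-symmetry of $u\times$ makes the system uniformly parabolic with constant $\ep$ is correct and is exactly why the constants here may (and do) depend on $\ep$.
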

\begin{proof}
Let $\eta(t)$ be a smooth cut-off function such that $\mbox{supp}\eta\subset (0,T]$ and $\eta\equiv1$ on $[\de,T]$ for any $\de>0$. Then $\eta v$ is a strong solution of the following equation
\begin{equation}\label{H-eq}
\begin{cases}
\frac{\p \om}{\p t}-\ep\De \om -u\times \De \om=\tilde{f},\quad&\text{(x,t)}\in\Om\times[0,T],\\[1ex]
\frac{\p\om}{\p \nu}|_{\p \Om\times[0,T]}=0,\quad \om(x,0)=0,\quad &\text{x}\in\Om.\\[1ex]
\end{cases}
\end{equation}
Here
\[\tilde{f}=\eta f-\frac{\p \eta}{\p t}v,\]
it is easy to see that the assumptions in Theorem \ref{reg-es} implies
\[\tilde{f}\in L^2([0,T], H^{1}(\Om)).\]

Next, we claim there exists a solution $w\in L^\infty([0,T],H^2(\Om))\cap L^2([0,T],H^3(\Om))$ to the above equation \eqref{H-eq}, by using Galerkin approximation methold. Namely we consider the Galerkin approximation equation to \eqref{H-eq}
\begin{equation}\label{G-H-eq}
\begin{cases}
\frac{\p\om^n}{\p t}-\ep\De \om^n -P_n(u\times \De \om^n)=P_n(\tilde{f}),\quad &\text{(x,t)}\in\Om\times[0,T],\\[1ex]
\om^n(x,0)=0,\quad &\text{x}\in\Om,\\[1ex]
\end{cases}
\end{equation}
where the Galerkin projection $P_n$ is defined in Section \ref{s: pre}. Under the assumptions of $u$ and $\tilde{f}$, one can show there exists a unique solution $w^n(x,t)=\sum_{i=1}^{n}g_i(t)f_i(x)$ to \eqref{G-H-eq} on $\Om\times[0,T]$ (cf. \cite{YaZ}).

Then by taking $w^n$, $\De w^n$ and $\De^2 w^n$ as test functions to \eqref{G-H-eq}, a simple calculation shows
\begin{align*}
&\frac{\p}{\p t}\int_{\Om}|\om^n|^2dx+\ep\int_{\Om}|\n\om^n|^2dx\leq C(\ep) \norm{u}_{L^\infty([0,T], H^3)}\int_{\Om}|\om^n|^2dx+C(\ep)\int_{\Om}|\tilde{f}|^2dx,\\
&\frac{\p}{\p t}\int_{\Om}|\n \om^n|^2dx+\ep\int_{\Om}|\De\om^n|^2dx\leq C(\ep)\int_{\Om}|\tilde{f}|^2dx,\\
&\frac{\p}{\p t}\int_{\Om}|\De \om^n|^2dx+\ep\int_{\Om}|\n\De\om^n|^2dx\leq C(\ep) \norm(u)_{L^\infty([0,T], H^3)}\int_{\Om}|\De \om^n|^2dx+C(\ep)\int_{\Om}|\n \tilde{f}|^2dx.
\end{align*}
The Gronwall inequality gives
\begin{align}
&\sup_{0\leq t\leq T}\norm{w^n}^2_{H^1}+\ep\int_{0}^{T}\int_{\Om}|\De \om^n|^2dxdt\leq C(\ep, \norm{u}_{L^\infty([0,T],H^3)}, T)\int_{0}^{T}\int_{\Om}|\tilde{f}|^2dxdt,\\
&\sup_{0\leq t\leq T}\int_{\Om}|\De \om^n|^2dx+\ep\int_{0}^{T}\int_{\Om}|\n\De\om^n|^2dxdt\leq C(\ep, \norm{u}_{L^\infty([0,T],H^3)}. T)\int_{0}^{T}\int_{\Om}|\n\tilde{f}|^2dxdt.
\end{align}
By using Equation \eqref{G-H-eq} again, one obtains
\begin{align*}
&\sup_{0\leq t\leq T}\norm{w^n}^2_{H^1}+\ep\(\int_{0}^{T}\int_{\Om}|\frac{\p w^n}{\p t}|^2dxdt+\int_{0}^{T}\norm{w^n}^2_{H^2}dt\)\\
\leq &C(\ep, \norm{u}_{L^\infty([0,T],H^3)}, T)\int_{0}^{T}\int_{\Om}|\tilde{f}|^2dxdt,\\	
&\sup_{0\leq t\leq T}\int_{\Om}|\frac{\p w^n}{\p t}|^2dx+\ep\int_{0}^{T}\int_{\Om}|\n\frac{\p w^n}{\p t}|^2dxdt\\
\leq &C(\ep, \norm{u}_{L^\infty([0,T],H^3)}, T)\int_{0}^{T}\int_{\Om}|\tilde{f}|^2+|\n\tilde{f}|^2dxdt.
\end{align*}
Therefore by letting $n\to \infty$, $w^n$ converges to a solution $w$ of \eqref{H-eq} as we claimed, which satisfies
\begin{align}
&\norm{\om}_{L^\infty([0, T], H^1(\Om))}+\norm{\om}_{L^2([0,T],H^2(\Om))}+\norm{\frac{\p \om}{\p t}}_{L^{2}([0,T],L^2(\Om))}\leq C(\ep)\norm{\tilde{f}}_{L^2([0,T]\times\Om)},\\
&\norm{\om}_{L^\infty([0, T], H^2(\Om))}+\norm{\om}_{L^2([0,T],H^3(\Om))}+\norm{\frac{\p \om}{\p t}}_{L^{2}([0,T],H^1(\Om))}
\leq C(\ep)\norm{\tilde{f}}_{L^2([0,T], H^1(\Om))}.
\end{align}

Then the uniqueness of strong solutions to \eqref{H-eq} implies $\eta v=\om$. Therefore the desired result is proved.
\end{proof}

Now, we apply the above Theorem \ref{reg-es} to show the local estimates of solution $\frac{\p u_\ep}{\p t}$ to \eqref{w-eq1} which can be summarized as the following theorem.
	
\begin{thm}\label{A.3}
The solution $u_\ep$ to \eqref{PS-eq} obtained in Theorem \ref{uin-ex-PS} satisfies
\[\frac{\p u_\ep}{\p t}\in L^2_{loc}((0,T_\ep), H^3(\Om))\]
and
\[\frac{\p^2 u_\ep}{\p t^2}\in L^2_{loc}((0,T_\ep), H^1(\Om)).\]
\end{thm}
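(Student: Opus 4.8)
The plan is to observe that $v := \frac{\p u_\ep}{\p t}$ is, by \eqref{w-eq1}, a strong solution of the linear uniformly parabolic problem
\begin{equation*}
\begin{cases}
\frac{\p v}{\p t} - \ep \De v - u_\ep \times \De v = f(u_\ep, \n v, v), & (x,t) \in \Om \times [0,T],\\[1ex]
\frac{\p v}{\p \nu}|_{\p\Om \times [0,T]} = 0, & v(\cdot,0) = \frac{\p u_\ep}{\p t}|_{t=0},
\end{cases}
\end{equation*}
whose source term is $f(u_\ep, \n v, v) = v \times \De u_\ep + 2\ep \<\n u_\ep, \n v\> u_\ep + \ep |\n u_\ep|^2 v$, and then to feed this into the local parabolic estimates of Theorem \ref{reg-es}. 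So the whole argument reduces to checking, on each interval $[0,T]$ with $T < T_\ep$, the three hypotheses of Theorem \ref{reg-es}: that $v \in W^{2,1}_{2}(\Om \times [0,T])$ is a strong solution; that the coefficient obeys \eqref{cond1}, namely $u_\ep \in L^\infty([0,T], H^3(\Om)) \cap C^0(\Om \times [0,T])$; and that the source obeys \eqref{cond2}, namely $f \in L^2([0,T], H^1(\Om))$.

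The first two are essentially already available. Theorem \ref{uin-ex-PS} gives $\frac{\p u_\ep}{\p t} \in L^2([0,T], H^2(\Om))$ together with $\frac{\p^2 u_\ep}{\p t^2} \in L^2([0,T], L^2(\Om))$, so $v \in W^{2,1}_{2}(\Om \times [0,T])$ and it satisfies \eqref{w-eq1} almost everywhere; the Neumann condition $\frac{\p v}{\p \nu}|_{\p\Om} = 0$ is exactly Lemma \ref{comp-bdy}. Moreover $u_\ep \in L^\infty([0,T], H^3(\Om))$ is part of Theorem \ref{uin-ex-PS}, and the continuity $u_\ep \in C^0(\Om \times [0,T])$ was already deduced above from $(2)$ of Lemma \ref{A-S} and the three-dimensional embedding $H^3(\Om) \hookrightarrow C^0(\bar{\Om})$.

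The crux — and the step I expect to be the main obstacle — is the source estimate $f \in L^2([0,T], H^1(\Om))$ for every $T < T_\ep$. Here I would bound $\n f$ term by term, consistently putting the $u_\ep$-factors in $L^\infty$ in time (controlled by $\norm{u_\ep}_{L^\infty([0,T], H^3(\Om))}$) and the $v$-factors in $L^2$ in time (controlled by $\norm{v}_{L^2([0,T], H^2(\Om))}$), using in space only the three-dimensional embeddings $H^1(\Om) \hookrightarrow L^6(\Om)$, $H^2(\Om) \hookrightarrow L^\infty(\Om)$ and $H^3(\Om) \hookrightarrow W^{1,\infty}(\Om)$. For example, $\n(v \times \De u_\ep) = \n v \dot{\times} \De u_\ep + v \times \n\De u_\ep$ with $\norm{\n v \dot{\times} \De u_\ep}_{L^2(\Om)} \leq C\norm{\n v}_{L^4}\norm{\De u_\ep}_{L^4} \leq C\norm{v}_{H^2}\norm{u_\ep}_{H^3}$ and $\norm{v \times \n\De u_\ep}_{L^2(\Om)} \leq C\norm{v}_{L^\infty}\norm{\n\De u_\ep}_{L^2} \leq C\norm{v}_{H^2}\norm{u_\ep}_{H^3}$, so integration in $t$ gives a bound by $C\norm{u_\ep}_{L^\infty([0,T], H^3)}\norm{v}_{L^2([0,T], H^2)} < \infty$; the terms $2\ep\<\n u_\ep, \n v\> u_\ep$ and $\ep|\n u_\ep|^2 v$ are treated the same way. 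This is precisely the membership $f \in L^2_{loc}([0,T_\ep), H^1(\Om))$ recorded just after \eqref{w-eq1} in the main text.

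With the three hypotheses in place, Theorem \ref{reg-es} applied on $[0,T]$ yields $v \in L^2([\de,T], H^3(\Om))$ and $\frac{\p v}{\p t} \in L^2([\de,T], H^1(\Om))$ for all $0 < \de < T < T_\ep$. As $\de$ and $T$ are arbitrary in this range, this is exactly $\frac{\p u_\ep}{\p t} \in L^2_{loc}((0,T_\ep), H^3(\Om))$ and $\frac{\p^2 u_\ep}{\p t^2} \in L^2_{loc}((0,T_\ep), H^1(\Om))$, completing the proof.
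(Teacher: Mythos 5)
Your proposal is correct and follows essentially the same route as the paper: identify $v=\frac{\p u_\ep}{\p t}$ as a strong solution of the linear parabolic problem \eqref{w-eq3}, verify conditions \eqref{cond1} and \eqref{cond2} (the paper likewise establishes $u_\ep\in C^0(\Om\times[0,T])$ via Lemma \ref{A-S} and bounds $\n f$ term by term using $\norm{u_\ep}_{L^\infty([0,T],H^3)}$ and $\norm{\frac{\p u_\ep}{\p t}}_{L^2([0,T],H^2)}$), and then apply Theorem \ref{reg-es}. Your additional explicit check that $v\in W^{2,1}_2$ and that the Neumann condition holds via Lemma \ref{comp-bdy} is a welcome tightening of hypotheses the paper leaves implicit, but it is not a different argument.
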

	
\begin{proof}
Since $\frac{\p u_\ep}{\p t}$  satisfies the following equation:
\begin{equation}\label{w-eq3}
\begin{cases}
\frac{\p v}{\p t}-\ep\De v-u_\ep\times\De v=f(u_\ep, \n v, v),\\[1ex]
v(x,0)=\frac{\p u_\ep}{\p t}|_{t=0},\quad \frac{\p v}{\p t}|_{\p\Om\times (0, T_\ep)}=0.
\end{cases}
\end{equation}
Here
\[f(u_\ep, \n v, v)=v\times \De u_\ep+2\ep\<\n u_\ep,\n v\>u_\ep+\ep |\n u_\ep|^2v.\]
By the above Theorem \ref{reg-es}, we only need to check that $u_\ep$ and $f(u_\ep, \n v,v)$ satisfies the conditions \eqref{cond1} and \eqref{cond2} respectively.
		
Since $u_\ep\in L^\infty([0,T], H^3(\Om))$ and $\frac{\p u_\ep}{\p t}\in L^2([0,T], L^2(\Om))$ for all $T<T_\ep$, then $(2)$ of Lemma \ref{A-S} implies
\[u_{\ep}\in C^0([0,T], H^2(\Om)).\]
Indeed, for any $(x,t)$ and $(x_0,t_0)$ we have
\begin{equation}
\begin{aligned}
|u_\ep(x,t)-u_\ep(x_0,t_0)|\leq&|u_\ep(x,t)-u_\ep(x_0,t)|+|u_\ep(x_0,t)-u_\ep(x_0,t_0)|\\
\leq &\sup_{\Om}|\n u_\ep|(\cdot,t)|x-x_0|+|u_\ep(x_0,t)-u_\ep(x_0,t_0)|\\
\leq &C\norm{u_\ep}_{L^\infty([0,T], H^3(\Om))}|x-x_0|+C\norm{u_\ep(\cdot, t)-u_\ep(\cdot, t_0)}_{H^2(\Om)}.
\end{aligned}
\end{equation}
This implies $u_\ep\in C^0(\Om\times [0,T])$.
		
Next, we want to show $f(u_\ep,\n v, v)\in L^2([0,T], H^1(\Om))$. A simple calculations shows
\[\int_0^T\int_{\Om}|f|^2dxdt\leq C(1+\ep)(\norm{u_\ep}^4_{L^\infty([0,T], H^3)}+1)\norm{\frac{\p u_\ep}{\p t}}^2_{L^2([0,T],H^1)}\leq C(T),\]
and
\begin{align*}
\n f=&\n \frac{\p u_\ep}{\p t}\times \De u_\ep+\frac{\p u_\ep}{\p t}\times \n \De u_\ep\\
&+\n^2\frac{\p u_\ep}{\p t}\#\n u_\ep\# u_\ep+\n \frac{\p u_\ep}{\p t}\# \n^2 u_\ep\# u_\ep\\
&+\n \frac{\p u_\ep}{\p t}\# \n u_\ep\# \n u_\ep+\frac{ \p u_\ep}{\p t}\# \n^2 u_\ep\#\n u_\ep,\\
\end{align*}
where ``$\#$" denotes the linear contraction. Thus, we have
\[\int_0^T\int_{\Om}|\n f|^2dxdt\leq C(\norm{u_\ep}^4_{L^\infty([0,T], H^3)}+\norm{u_\ep}^2_{L^2([0,T], H^3)}+1)\norm{\frac{\p u_\ep}{\p t}}_{L^2([0,T], H^2)}\leq C(T).\]
Here, we have used the estimate \eqref{es-PS}. Therefore, from Theorem \ref{reg-es} we can obtain the desired results.
\end{proof}
\medskip
	
\section{The Schr\"{o}dinger flow in moving frame and parallel transportation}\label{ref-B}
\subsection{The Schr\"{o}dinger flow in moving frame}
Let $\Om$ be a smooth bounded domain in $\Real^3$. Suppose that $u: \Om\times [0,T]\to \U^2$ is a solution to the Schr\"{o}dinger flow
\begin{equation}\label{S-eq1}
\begin{cases}
\p_tu =J(u)\tau(u),\quad\quad&\text{(x,t)}\in\Om\times \Real^+,\\[1ex]
\frac{\p u}{\p \nu}=0, &\text{(x,t)}\in\p\Om\times \Real^+,\\[1ex]
u(x,0)=u_0: \Om\to \U^2,
\end{cases}
\end{equation}
where $J(u)=u\times$. We are going to rewrite the above equation in a chosen gauge of the pull-back bundle $u^*T\U^2$ over $\Om\times[0,T]$.
	
Let $\n^{\U^2}$ be the connection on $\U^2$ and $\n=u^*\n^{\U^2}$ be the pull-back connection on $u^*T\U^2$. Let $\{x_1,x_2,x_3, t\}$ be the canonical coordinates on $\Om\times[0,T]$, denote $\n_{t}=\n_{\frac{\p }{\p t}}$ and $\n_{i}=\n_{\frac{\p }{\p x_1}}$ for $i=1,2,3$. Recall that the tension field $\tau(u)=\tr\n^2 u=\n_i\n_i u$, we can write the equation \eqref{S-eq1} in the form
\[\n_t u=J(u)\n_i\n_i u=\n_i(J(u)\n_i u).\]
	
Let $\{e_\al\}_{\al=1}^2$ be a local frame of the pull-back bundle $u^*T\U^2$ over $\Om\times[0,T]$, such that the complex structure $J$ in this frame is reduced to $J_0=\sqrt{-1}$.
	
Let $\n_i u=u_i^\al e_\al$, where $i=1,2,3$. The Neumann condition on boundary in \eqref{S-eq1} is equivalent to
\[\sum_{i=1}^3u_i^\al\cdot \nu_i|_{\p\Om\times [0,T]}=0\]
for $\al=1,2$, where $\nu=(\nu_1,\nu_2,\nu_3)$ be the outer normal vector of $\p \Om$.

\subsection{Parallel transportation and some lemmas}\label{s-B}
Let $B\subset \U^2$ be an open geodesic ball with radius $<\frac{\pi}{2}$. Then for any $y_1,\, y_2\in B$, there exists a unique minimizing geodesic $\ga(s):[0,1]\to \U^2$ connecting $y_1$ and $y_2$, and let $\P:T_{y_2}\U^2\to T_{y_1}\U^2$ be the linear map given by parallel transport along $\ga$. Let $\{e_1(s),e_2(s)\}$ be the frame gotten by parallel transport along $\ga$, set $e_\al(y_1)=e_\al(0)$ and $e_\al(y_2)=e_\al(1)$. Then, for any $X=X^\al(y_2)e_\al(1)\in T_{y_2}\U^2$, the above linear map $\P$ has the following formula
	\[\P X=X^\al(y_2)e_\al(1).\]
	
Let $d:\U^2\times\U^2\to \Real$ be the distance function on $\U^2$, and $\tilde{\n}=\n^{\U^2}\otimes\n^{\U^2}$ be the product connection on $\U^2\times\U^2$. We have the following estimates for gradient and Hessian of the distance function, whose proof can be found in \cite{CJG,S-W}.
\begin{lem}\label{es-dist}
Suppose $X=(X_1,X_2)$ and $Y=(Y_1,Y_2)$ be two vectors in $T_{y_1}\U^2\times T_{y_2}\U^2$such that $d(y_1,y_2)<\frac{\pi}{2}$, then
\begin{itemize}
\item[$(1)$] $\frac{1}{2}\tilde{\n}d^2(X)=\<\ga^\prime(0),\P X_2-X_1\>$,\\
			
\item[$(2)$] $\frac{1}{2}|\tilde{\n}^2d^2(X,Y)|\leq |\P X_2-X_1||\P Y_2-Y_1|+Cd^2(y_1,y_2)(|X_1|+|X_2|)(|Y_1|+|Y_2|)$.
\end{itemize}
\end{lem}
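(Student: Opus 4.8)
The plan is to read both bounds off the first and second variation formulas for the energy of the minimizing geodesic joining $y_1$ and $y_2$, and, for part~(2), to exploit that $\U^2$ has constant sectional curvature $1$ so that the relevant Jacobi fields can be written down explicitly and the curvature correction extracted. Since $d(y_1,y_2)<\pi/2$, the minimizing geodesic $\ga:[0,1]\to\U^2$ from $y_1$ to $y_2$ is unique, of constant speed $|\ga'|\equiv d(y_1,y_2)$, and depends smoothly on its endpoints. For part~(1), differentiating $s\mapsto\frac12 d^2(c_1(s),c_2(s))=\int_0^1\frac12|\ga_s'|^2\,d\tau$ along a curve with velocity $(X_1,X_2)$, where $\ga_s$ is the geodesic from $c_1(s)$ to $c_2(s)$, the first variation of energy gives $\frac12\tilde{\n}d^2(X)=\<\ga'(1),X_2\>-\<\ga'(0),X_1\>$; equivalently, the gradient of $\frac12 d^2$ at $(y_1,y_2)$ is $(-\ga'(0),\ga'(1))$. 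Since $\ga'$ is parallel along $\ga$ and $\P$ is the \emph{isometric} parallel transport along $\ga$, one has $\P\ga'(1)=\ga'(0)$ and $\<\ga'(1),X_2\>=\<\ga'(0),\P X_2\>$, so $\frac12\tilde{\n}d^2(X)=\<\ga'(0),\P X_2-X_1\>$.

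For part~(2), given $X=(X_1,X_2)$ I would take a geodesic variation $\Ga(s,\tau)=\ga_s(\tau)$ of $\ga=\ga_0$ with $\p_s\Ga(0,0)=X_1$ and $\p_s\Ga(0,1)=X_2$; then $W^X(\tau):=\p_s\Ga(0,\tau)$ is the Jacobi field along $\ga$ with $W^X(0)=X_1$, $W^X(1)=X_2$. Differentiating the gradient formula of part~(1) in the $X$ direction and using $\n_s\ga_s'(\tau)|_{s=0}=(W^X)'(\tau)$ (the accelerations of $c_1,c_2$ cancel against the connection term in $\tilde{\n}^2$) yields
\[\tfrac12\tilde{\n}^2d^2(X,Y)=\<(W^X)'(1),Y_2\>-\<(W^X)'(0),Y_1\>,\]
which is the index form of $W^X$ and $W^Y$ and is therefore symmetric in $X,Y$ by the usual Wronskian identity for Jacobi fields; the remaining task is to bound this quantity.

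Here the constant curvature of $\U^2$ enters, through the identity $R^{\U^2}(X,Y)Z=\<Y,Z\>X-\<X,Z\>Y$ recalled in Section~\ref{intr}. Writing $r=d(y_1,y_2)$, split every Jacobi field into its component along $\ga'$ and its component normal to $\ga'$, a splitting preserved by $\P$ since $\P\ga'(1)=\ga'(0)$. Tangential Jacobi fields are affine in $\tau$; in a parallel orthonormal normal frame a normal Jacobi field is a combination of $\cos(r\tau)$ and $r^{-1}\sin(r\tau)$. Solving the two endpoint conditions for $W^X$ and $W^Y$ explicitly (legitimate because $0<r<\pi/2$ keeps $\sin r$ bounded away from $0$) and inserting them into the index form, the tangential contribution comes out as $\<\P X_2-X_1,\hat n\>\<\P Y_2-Y_1,\hat n\>$ with $\hat n=\ga'(0)/r$, while the normal contribution equals $\<(\P X_2-X_1)^\perp,(\P Y_2-Y_1)^\perp\>$ plus terms whose coefficients are $\tfrac{r\cos r}{\sin r}-1$ and $1-\tfrac{r}{\sin r}$ multiplied by products of the individual normal components. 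These two functions are smooth on $[0,\pi/2]$ and vanish to second order at $r=0$, hence are $O(r^2)=O(d^2(y_1,y_2))$. Adding the two contributions, applying Cauchy--Schwarz in the two-dimensional (radial, normal) splitting together with $|\P X_2-X_1|^2=|(\P X_2-X_1)^\top|^2+|(\P X_2-X_1)^\perp|^2$, and bounding each component by $|X_1|,|X_2|,|Y_1|,|Y_2|$, gives precisely the stated inequality; the degenerate case $y_1=y_2$ (where $\P=\mathrm{id}$, $r=0$) follows by continuity.

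The step I expect to be the main obstacle is this last one: performing the explicit two-point solution of the Jacobi equation while keeping the radial/normal decomposition compatible with $\P$, and in particular extracting a genuinely \emph{quadratic}-in-$d$ remainder from the trigonometric coefficients rather than a crude $O(r)$ one --- only the quadratic bound is strong enough to close the Gronwall-type estimate in Section~\ref{s3}. Everything else is the textbook first and second variation machinery; a reference for the comparison-geometry input is \cite{CJG,S-W}.
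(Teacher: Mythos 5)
Your argument is correct: part (1) is the standard first variation of energy combined with $\P\ga'(1)=\ga'(0)$, and for part (2) the identification of $\tfrac12\tilde{\n}^2d^2(X,Y)$ with the index form of the two-point Jacobi fields, followed by the explicit $\cos(r\tau)$, $\sin(r\tau)$ solution and the observation that $\tfrac{r\cos r}{\sin r}-1$ and $1-\tfrac{r}{\sin r}$ are $O(r^2)$, does yield the quadratic remainder needed in Section \ref{s3}. Note that the paper itself gives no proof of this lemma but defers to \cite{CJG,S-W}; those references run the same second-variation/Jacobi-field argument for a general target of bounded geometry using comparison estimates, whereas you exploit the constant curvature of $\U^2$ to solve the Jacobi equation explicitly, which is a legitimate and more self-contained specialization.
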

	
\medskip
On the other hand, let $u_l:\Om\times [0,T]\to \U^2$, $l=1,2$, with
$$\sup_{\Om\times[0,T]} d(u_1(x,t),u_2(x,t))<\frac{\pi}{2}$$
and denote $\n_l=u_l^*\n^{\U^2}$. Then, for any $(x,t)\in \Om\times[0,T]$ there exists a unique minimizing geodesic $\ga_{(x,t)}(s):[0,1]\to \U^2$ connecting $u_1(x,t)$ and $u_2(x,t)$. More precisely, we define a map $U:\Om\times [0,T]\times[0,1]\to \U^2$ such that $U(x,t,s)=\ga_{(x,t)}(s)$, then $u_l^*T\U^2=U^*T\U^2|_{s=l-1}$ and $\n_l=U^*\n^{\U^2}|_{s=l-1}$. Therefore, we can define a global bundle isomorphism $\P : u^*_2T\U^2 \to u^*_1T\U^2 $ by the parallel transportation along each geodesic. And hence, $\P$ can be extended naturally to a bundle isomorphism from $u^*_2T\U^2\otimes T^*\Om$ to $u^*_1T\U^2\otimes T^*\Om$.
	
Let $\{e_1,e_2\}$ be a fixed local frame of bundle $u_1^*T\U^2$ such that $J(u_1)=\sqrt{-1}$. For each point $(x,t)$, we parallel transport this frame to get a moving frame $\{e_1(s),e_2(s)\}$ along the geodesic $\ga_{(x,t)}(s)$, and set $e_{1,\al}=e_\al(0)$ and $e_{2,\al}=e_\al(1)$ for $\al=1,2$.  Under this local frame $\{e_1(1), e_2(1)\}$ of $u^*_2 T\U^2$, we still have
\[J(u_2)=\sqrt{-1},\]
since $\n_{\frac{\p \ga}{\p s}} J(\ga)=\frac{\p }{\p s} J\circ \ga=0$ and $J\circ \ga(0, x,t)=\sqrt{-1}$.
	
On the other hand, if we denote $\n_l u_l=u^\al_{l,i}e_{l,\al}\otimes dx^i$ and set $\phi_l=u^\al_{l,i}e_{1,\al}\otimes dx^i$, then
\[\P\n_2 u_2=\P u^\al_{2,i} e_{2,\al}\otimes dx^i=u^\al_{2,i} e_{1,\al}\otimes dx^i=\phi_2,\]
and hence
\[\Phi:=\P \n_2 u_2-\n_1 u_1=(u^\al_{2,i}-u^\al_{1,i})e_{1,\al}\otimes dx^i=\phi_2-\phi_1.\]
\medskip
	
Denote the difference of the two connections by
\[B=\n_2-\n_1=(\<\n_2 e_\al(1), e_{\beta}(1)\>-\<\n_2 e_\al(0), e_{\beta}(0)\>)e_\beta(0),\]
which is a tensor. The following estimates for the difference of connections is essential to control the energy $\int_{\Om}|\Phi|^2dx$ in the proof of the uniqueness, whose proof can be found in \cite{S-W}.
\begin{lem}\label{es-con}
The exists constant C independing on $u_1$ and $u_2$, such that the following bounds hold true.
\begin{itemize}
	\item[$(1)$] $|B_t|=|\n_{2,t}-\n_{1,t}|\leq C(|\n_t u_1|+|\n_t u_2|)d(u_1,u_2)$,
	\item[$(2)$] $|B_i|=|\n_{2,i}-\n_{1,i}|\leq C(|\n_i u_1|+|\n_i u_2|)d(u_1,u_2)$,
	\item[$(3)$] $|\n_{2,k}\n_{2,i}-\n_{1,k}\n_{1,i}|\leq C(|\Phi|+(|\n_1^2 u_1|+|\n_2^2 u_2|+1)d(u_1,u_2))$.
\end{itemize}
Here $i,k=1,2,3$.
\end{lem}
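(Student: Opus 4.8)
The plan is to express the connection difference $B=\n_2-\n_1$ as a curvature integral along the minimizing geodesics joining $u_1$ and $u_2$, in the spirit of \cite{S-W,Mc}. Fix $(x,t)$ and recall the geodesic variation $U:\Om\times[0,T]\times[0,1]\to\U^2$ with $U(x,t,s)=\ga_{(x,t)}(s)$, together with the frame $\{e_\al(s)\}$ obtained by parallel transport along $\ga_{(x,t)}$, so that $e_{l,\al}=e_\al(l-1)$. Let $\mathcal A=U^*\n^{\U^2}$ be the pull-back connection on $U^*T\U^2$ over $\Om\times[0,T]\times[0,1]$. Since $\{e_\al(s)\}$ is parallel in the $s$-direction, the connection form of $\mathcal A$ in this frame has vanishing $ds$-component, $\mathcal A_s=0$, while $\mathcal A_\mu|_{s=0}$ and $\mathcal A_\mu|_{s=1}$ are, respectively, the connection forms of $\n_1$ in $\{e_{1,\al}\}$ and of $\n_2$ in $\{e_{2,\al}\}$, for each base direction $\mu\in\{\p_{x_1},\p_{x_2},\p_{x_3},\p_t\}$. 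The structure equation together with $\mathcal A_s=0$ gives $\p_s\mathcal A_\mu=\mathcal F_{s\mu}=R^{\U^2}(U_*\p_s,U_*\mu)$, hence
\[ B_\mu=\mathcal A_\mu|_{s=1}-\mathcal A_\mu|_{s=0}=\int_0^1 R^{\U^2}(U_*\p_s,U_*\mu)\,ds. \]

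To prove $(1)$ and $(2)$ I would bound the three factors in the integrand. As $\ga_{(x,t)}$ is a constant-speed minimizing geodesic on $[0,1]$, $|U_*\p_s|$ is constant in $s$ and equals $d(u_1(x,t),u_2(x,t))$; the curvature operator of the round $\U^2$ satisfies $|R^{\U^2}|\le C$; and $s\mapsto U_*\mu(x,t,s)$ is a Jacobi field along $\ga_{(x,t)}$ with endpoint values $\n_\mu u_1$ at $s=0$ and $\n_\mu u_2$ at $s=1$. Because $\mathrm{length}(\ga_{(x,t)})=d(u_1,u_2)<\pi/2$ stays below the conjugate radius of $\U^2$, the elementary Jacobi comparison on the sphere bounds $|U_*\mu|(s)\le C(|\n_\mu u_1|+|\n_\mu u_2|)$ uniformly in $s\in[0,1]$. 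Inserting these three bounds into the integral yields $|B_\mu|\le C\,d(u_1,u_2)(|\n_\mu u_1|+|\n_\mu u_2|)$, i.e. $(1)$ with $\mu=\p_t$ and $(2)$ with $\mu=\p_{x_i}$.

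For $(3)$ I would differentiate this representation once more. For any section $\xi$ of the identified bundle one has the exact identity $(\n_{2,k}\n_{2,i}-\n_{1,k}\n_{1,i})\xi=(\n_{1,k}B_i)\xi+B_i\n_{1,k}\xi+B_k\n_{1,i}\xi+B_kB_i\xi$; the last three terms are handled by $(1)$--$(2)$ and carry a factor $d(u_1,u_2)$, while $\n_{1,k}B_i$ is computed by differentiating the curvature integral under the integral sign. This produces $\n R^{\U^2}$ (bounded), the covariant $s$-derivative of the Jacobi field $U_*\p_k$ (again $\le C(|\n_k u_1|+|\n_k u_2|)$ on $[0,1]$), and the second variation field $\n_k(U_*\p_i)$, whose endpoint values are $\n_1^2u_1$ and $\n_2^2u_2$ and which solves an inhomogeneous Jacobi-type equation, so it is bounded on $[0,1]$ by $C(|\n_1^2u_1|+|\n_2^2u_2|+(|\n u_1|+|\n u_2|)^2d(u_1,u_2))$. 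Rewriting the remaining first-order contribution through $\Phi=\P\n_2u_2-\n_1u_1$ — here the parallel-frame normalisation is exactly what replaces $\n u_1-\n u_2$ by $\Phi$ — and collecting all terms gives the estimate in $(3)$.

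The step I expect to be the main obstacle is the uniform-in-$(x,t)$ control of the Jacobi data: one needs that along each geodesic segment of length $<\pi/2$ a Jacobi field, its covariant $s$-derivative, and the two-parameter variation field $\n_k(U_*\p_i)$ are all bounded by their boundary values (and, in the last case, by a curvature remainder) with constants independent of the point. On the round sphere this is immediate because the Jacobi operator has constant coefficients, and it is precisely where the hypothesis $\sup_{\Om\times[0,T]}d(u_1,u_2)<\pi/2$ — available by Step 1 of the uniqueness argument — is indispensable. The remainder is routine bookkeeping, the only delicate point being the rearrangement in $(3)$ that converts $|\n u_1-\n u_2|$ into $|\Phi|$.
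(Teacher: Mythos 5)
Your approach is, in substance, the same argument that the paper relies on: the paper does not prove Lemma \ref{es-con} itself but defers to \cite{S-W} (see also \cite{Mc}), and the proof there rests on exactly the representation you propose, namely $B_\mu=\n_{2,\mu}-\n_{1,\mu}=\int_0^1 R^{\U^2}(U_*\p_s,U_*\p_\mu)\,ds$ in the $s$-parallel gauge, followed by Jacobi-field comparison along the connecting geodesics of length $L=d(u_1,u_2)<\pi/2$. Your proofs of $(1)$ and $(2)$ are complete: $|U_*\p_s|\equiv d(u_1,u_2)$, $|R^{\U^2}|\le C$, and the boundary-value bound $|U_*\p_\mu|(s)\le C(|\n_\mu u_1|+|\n_\mu u_2|)$ holds uniformly in $s$ and in $(x,t)$ because $L/\sin L\le\pi/2$ for $L<\pi/2$.

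For $(3)$ one step does not close as written. After differentiating under the integral sign, the term $\int_0^1 R^{\U^2}(\n_s(U_*\p_k),U_*\p_i)\,ds$ (coming from $\n_k(U_*\p_s)=\n_s(U_*\p_k)$) no longer carries the small factor $|U_*\p_s|$, so the crude bound $|\n_s(U_*\p_k)|\le C(|\n_k u_1|+|\n_k u_2|)$ that you state produces a contribution of size $(|\n u_1|+|\n u_2|)^2$ with neither a factor $d(u_1,u_2)$ nor $|\Phi|$, which is not admissible on the right-hand side of $(3)$. The bound you actually need, and which you only allude to in your closing sentence, is the refined Jacobi estimate: writing the normal part of $J=U_*\p_k$ in the parallel frame as $A\cos(Ls)+B\sin(Ls)$, one has $J(0)=A=\n_ku_1$, $\P J(1)-J(0)=\Phi_k$, and
\[
LB=\frac{L}{\sin L}\bigl(\P J(1)-J(0)\bigr)+\frac{L(1-\cos L)}{\sin L}\,J(0),
\qquad\text{hence}\qquad
|\n_s(U_*\p_k)|\le C\bigl(|\Phi_k|+d(u_1,u_2)\,|\n_k u_1|\bigr).
\]
This is precisely where $|\Phi|$ enters the estimate, and it should replace the crude bound in your argument. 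With that substitution, together with your bound on the second variation field $\n_k(U_*\p_i)$ (which multiplies $|U_*\p_s|\le d(u_1,u_2)$ and yields the $(|\n_1^2u_1|+|\n_2^2u_2|)d$ term) and the decomposition $\n_{2,k}\n_{2,i}-\n_{1,k}\n_{1,i}=(\n_{1,k}B_i)+B_i\n_{1,k}+B_k\n_{1,i}+B_kB_i$, the estimate $(3)$ follows; note that the residual quadratic terms $(|\n u_1|+|\n u_2|)^2d(u_1,u_2)$ are absorbed into the ``$+1$'' only because $\n u_l\in L^\infty$ in the setting where the lemma is applied, a point worth making explicit.
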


\medskip
\noindent {\it\bf{Acknowledgements}}: The authors are supported partially by NSFC (Grant No.11731001). The author Y. Wang is supported partially by NSFC (Grant No.11971400), and Guangdong Basic and Applied Basic Research Foundation (Grant No. 2020A1515011019).

\medskip	

\end{document}